\theoremstyle{plain}
\newtheorem{mconjecture}{Conjecture}
\newtheorem{mquestion}{Question}
\newtheorem{theorem}{Théorème}[section]
\newtheorem{conjecture}[theorem]{Conjecture}
\newtheorem{proposition}[theorem]{Proposition}
\newtheorem{corollary}[theorem]{Corollaire}
\newtheorem{lemma}[theorem]{Lemme}
\theoremstyle{definition}
\newtheorem{remark}[theorem]{Remarque}
\def\lra{{\longrightarrow}}
\def\SL{{\bf SL}}
\def\GL{{\bf GL}}
\def\PP{{\mathbf P}}
\def\A{\mathbf{A}}
\def\cX{\mathcal{X}}
\def\cG{\mathcal{G}}
\def\cC{\mathcal{C}}
\def\simarrow{{\stackrel{\sim}{\rightarrow}}}
\DeclareMathOperator{\Pic}{Pic} 
\DeclareMathOperator{\rec}{rec} 
\DeclareMathOperator{\Hom}{Hom} 
\DeclareMathOperator{\Eis}{\mathrm{Eis}}
\DeclareMathOperator{\sign}{signe}
\DeclareMathOperator{\Id}{Id}
\newcommand{\Ima}{\mathrm{\,Im\,}}
\newcommand{\Reel}{\mathrm{\,Re\,}}
\newcommand{\mat}[4]{\left( \begin{array}{cc} {#1} & {#2} \\ {#3} & {#4}
\end{array} \right)}
\newcommand{\prim}{\begin{array}{l}\!\!\!\!\! \prime\\ \ \end{array}}
\def\TT{\mathbf{T}}
\def\Z{\mathbf{Z}}
\def\Q{\mathbf{Q}}
\def\C{\mathbf{C}}
\def\R{\mathbf{R}}
\def\bdf{\begin{defn}}
\def\edf{\end{defn}}
\def\cH{\mathcal{H}}
\def\cO{\mathcal{O}}
\def\sk{\vskip 0.2cm}
\def\Gal{{\rm Gal}}
\def\gal{{\rm Gal}}
\def\N{\text{N}}
\def\mb{\mathbb}
\def\mc{\mathcal}
\begin{document}

$$ $$

\begin{center}{ \LARGE Arguments  des unités de Stark et \\
p\'eriodes de s\'eries d'Eisenstein\\
\vspace{0.7cm}
 \large
Pierre Charollois\footnote{ {\tt{charollois@math.jussieu.fr}}\\
Le premier auteur a
 b\'en\'efici\'e du soutien mat\'eriel
 de l'Institut de Math\'ematiques de Bordeaux et  du CRM-ISM
 de Montr\'eal au cours de l'\'elaboration
de cet article.}
 \\ \vspace{0.05cm} Henri Darmon\footnote{
{\tt darmon@math.mcgill.ca} \\
Le travail du second auteur a \'et\'e financ\'e
en partie par le CRSNG et la Chaire James McGill.}\\
\vspace{0.4cm}
25 octobre  2007
}
 \end{center}

\normalsize
 \bigskip

 \tableofcontents

\bigskip

\section*{Introduction}
Soit $K$ un corps de nombres,
et soit
$$S_\infty =\{v_1,\ldots, v_t \} $$
l'ensemble  de ses places archim\'ediennes.
Pour simplifier les \'enonc\'es qui suivent, on supposera (dans l'introduction
seulement)  que le
corps $K$ a pour nombre de classes $1$ au sens restreint.
On choisit,
pour chaque  place de $S_\infty$, 
 un plongement r\'eel ou complexe
associ\'e,
que l'on d\'esignera par le m\^eme symbole,
et l'on pose
(pour $x$ appartenant \`a $K^\times$, et $v\in S_\infty$)
$$ s_v(x):= \left\{
\begin{array}{ll}
\sign(v(x))\in\{-1,1\} & \mbox{ si $v$ est r\'eelle};\\
 1 & \mbox{ si $v$ est complexe}.
\end{array} \right.
$$

Soit $I$ un id\'eal de l'anneau des entiers $\cO_K$ de $K$, et soit
$\cO_{K,+}^\times(I)$ le sous-groupe
du groupe $\cO_{K,+}^\times$ des unit\'es totalement positives de $\cO_K$
form\'e des \'el\'ements qui sont
congrus \`a $1$ modulo $I$.
Pour tout $a\in (\cO_K/I)$, on associe au choix
de signes $s_{v_2},\ldots, s_{v_n}$
la fonction $L$ partielle de Hurwitz:
\begin{equation}
\label{HeckeShimizudef}
L(a, I,s)  :=   (\N I)^s \!\!\!\!
\sum_{\stackrel{x\in \cO_K/\cO_{K,+}^\times(I)}{x\equiv a \  (I)}}
\!\!\!\!\!\!\!\prim\!\!\!\!\!\!\! s_{v_2}(x) \cdots s_{v_t}(x) | \N_{K/\mb Q}(x)|^{-s},
\end{equation}
o\`u le symbole $\sum \  \, \prim\!\!\!\!\!\!\!$ indique que la somme est \`a prendre sur les
\'el\'ements {\em non-nuls}.
Ces fonctions $L$  jouissent  des propri\'et\'es
suivantes:
\begin{enumerate}
\item La fonction $L(a,I,s)$ ne d\'epend que de l'image de
$a$ dans le quotient
$(\cO_K/I)/\cO_{K,+}^\times$, sur lequel op\`ere le groupe
\begin{equation}
\label{eqn:defGIintro}
\cG_I:=(\cO_K/I)^\times/\cO_{K,+}^\times.
\end{equation}
Plus g\'en\'eralement,
pour toute
 unit\'e $\epsilon\in \cO_K^\times$, on a
$$ L(\epsilon a, I,s) = s_{v_2}(\epsilon)\cdots s_{v_t}(\epsilon) L(a,I,s).$$
\item La s\'erie qui d\'efinit $L(a,I,s)$ converge
absolument sur le demi-plan ${\mbox{Re}}(s)>1$, et admet un
prolongement méromorphe \`a tout le plan complexe avec au plus un
p\^ole simple en $s=1$. Si $t\ge 2$, cette fonction est m\^eme holomorphe,
et s'annule en $s=0$.
\end{enumerate}
Pour toute place r\'eelle $v\in S_\infty$, il existe alors une unit\'e
$\epsilon_v\in\cO_{K}^\times$
telle que
 $$s_v(\epsilon_v)=-1, \qquad  s_{v'}(\epsilon_v)=1 \ \mbox{ pour tout } v'\ne v.$$
De plus, la th\'eorie du corps de classes identifie
le groupe $\cG_I$  avec le groupe de Galois d'une extension
ab\'elienne $H$ de $K$, appel\'ee le {\em corps de classes de rayon au sens restreint}
associ\'e \`a $I$.
Soit
$$ \rec: \cG_I \lra \gal(H/K)$$
l'isomorphisme de r\'eciprocit\'e  de la th\'eorie du corps de classes,
et pour tout $v\in S_\infty$, soit
$c_v$ la conjugaison
complexe (``\'el\'ement de Frobenius") associ\'ee \`a la place
$v$,  de sorte que
$$ c_v = \left\{
\begin{array}{ll}

\rec(\epsilon_v) & \mbox{ si $v$ est r\'eelle},
\\
1 & \mbox{ si $v$ est complexe}.
\end{array} \right.
$$
On note $e$ l'ordre du groupe des racines de l'unit\'e dans $H$ et l'on  choisit une place ${\tilde v_1}$
de $H$ au-dessus de la place $v_1$.

La conjecture de Stark \cite{St}, \cite{Ta},
 concerne les
d\'eriv\'ees premi\`eres de $L(a,I,s)$ en $s=0$, et peut s'\'enoncer comme 
suit:
\begin{mconjecture}[Stark]
\label{conj:stark}
Pour tout $a\in (\cO_K/I)$, il
existe une unit\'e $u_a\in \cO_{H}^\times$,
appel\'ee {\em unit\'e de Stark} associ\'ee au couple  $(a,I)$,
telle que
\begin{enumerate}
\item $L'(a,I,0) =\frac 1e  \log |\tilde v_1(u_{a})|^2$;
\item $c_{v_1}(u_a) = u_a$;
\item Si $t\ge 3$,  alors  $c_{v_2} u_a = \cdots = c_{v_t} u_a = u_a^{-1}$;
\item Pour tout $b\in\cG_I$, on a $u_{ab} = \rec(b)^{-1} u_a$.
\end{enumerate}
\end{mconjecture}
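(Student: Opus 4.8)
The plan is to make property (1) the \emph{definition} of a candidate number $u_a$ through an explicit analytic formula, and then to prove by hand that this candidate is a genuine global unit of $\cO_H^\times$ satisfying (2)--(4); the analytic formula itself I would extract from the Kronecker limit formula applied to a real-analytic Eisenstein series whose period is $L(a,I,s)$. Concretely, following Hecke and Siegel I would first attach to the datum $(a,I)$ a special point $\tau_a$ on a product $\H^{t}$ of upper half-planes, indexed by the archimedean places, together with a cycle built from a system of fundamental units of $\cO_{K,+}^\times(I)$, and rewrite $L(a,I,s)$ as the integral over this cycle of an Eisenstein series $E(z,s)$ for the Hilbert modular group $\SL_2(\cO_K)$. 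The meromorphy and the order of vanishing at $s=0$ recorded in the excerpt then transcribe the known analytic behaviour of $E(z,s)$.

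The second step is to differentiate at $s=0$. The Kronecker limit formula computes the first nonzero term of $E(z,s)$ near $s=0$ as $\log|g(z)|^2$ for an explicit modular unit $g$ (a ratio of Siegel functions, or a $\Delta$-quotient in the imaginary quadratic case), so that $L'(a,I,0) = \tfrac1e\log|\tilde v_1(g(\tau_a))|^2$. I would \emph{define} $u_a := g(\tau_a)$ by this recipe, which simultaneously fixes all its archimedean absolute values, so that property (1) holds by construction. Properties (2) and (3) I would read off from the functional equation of $E(z,s)$ under $z\mapsto\bar z$ combined with the sign characters $s_{v_i}$: complex conjugation at $v_1$ fixes the period, while conjugation at $v_i$, $i\ge2$, inverts it thanks to the factor $s_{v_i}(x)$ in \eqref{HeckeShimizudef}. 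Property (4), the Galois equivariance $u_{ab}=\rec(b)^{-1}u_a$, I would deduce from the Shimura reciprocity law describing the action of $\gal(H/K)$ on the values of $g$ at special points, matched against the translation $a\mapsto ab$ of the summation datum.

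The decisive step --- and the one I expect to be the genuine obstacle --- is to prove that the analytically defined $u_a=g(\tau_a)$ is an \emph{algebraic} unit lying in $\cO_H^\times$, rather than a mere transcendental-looking real period. When $K=\Q$ or $K$ is imaginary quadratic (so $t=1$), the point $\tau_a$ is a cusp or a CM point, the value $g(\tau_a)$ is a cyclotomic or elliptic unit, and the theory of complex multiplication supplies both its algebraicity and the reciprocity law invoked above; in these cases the plan yields the full statement. For $t\ge2$, however, $\tau_a$ is no longer a CM point, no algebraicity mechanism is available, and even the existence of $u_a$ is unknown: what the construction produces is only the analytic invariant together with its prescribed absolute values, that is, the $H\otimes_\Q\R$-valued shadow of the conjectural unit. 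Closing this gap --- promoting the Eisenstein period to an honest element of $\cO_H^\times$, which is precisely why one is led to study its \emph{arguments} as in the present paper --- is the content that keeps the statement a conjecture.
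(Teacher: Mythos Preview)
The statement you were asked to prove is labeled a \emph{conjecture} in the paper, not a theorem, and the paper offers no proof of it. Stark's conjecture is stated in the introduction purely as background and motivation for the paper's own main Conjecture~\ref{conj:principale}, which proposes an analytic formula for the \emph{argument} of the conjectural unit $u_a$ in the ATR case. Your final paragraph already identifies correctly that promoting the analytic period to a genuine element of $\cO_H^\times$ is the open problem, so you understand that no complete argument exists; there is accordingly nothing in the paper to compare your attempt against.

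A few details of your sketch are nonetheless worth correcting, since they bear on what the paper actually does. The Eisenstein series used here is attached to $\SL_2(\cO_F)$ for the totally real base field $F$, not to $\SL_2(\cO_K)$: the special point $\tau$ arises from an $\cO_F$-basis of a projective $\cO_I$-module $M\subset K$, and the cycle $\Delta_\tau$ lives on the Hilbert modular variety of $F$, with real dimension $n-1$ in the ATR case rather than $t$. The Kronecker limit formula (via Asai's function $h$) then expresses $L'(M,0)$ as a period of a derivative of $E(z,s)$, and this is exactly how Theorem~\ref{thm:atrmain} recovers the \emph{real part} $\log|\tilde v_1(u_\tau)|$. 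But there is no ``Shimura reciprocity law'' available for these non-CM points, so your proposed route to property~(4) has no content outside the imaginary quadratic case; the paper does not claim one either, and the Galois equivariance in Conjecture~\ref{conj:principale} is asserted, not proved.
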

On notera que les unit\'es conjecturales $u_a$ d\'ependent du choix de la place
${\tilde v_1}$ au-dessus de $v_1$, mais seulement \`a conjugaison pr\`es par
$\gal(H/K)$.

Si $S_\infty-\{v_1\}$ poss\`ede
une place complexe,   la Conjecture \ref{conj:stark} est trivialement v\'erifi\'ee.
En effet, quand
 $t=2$, la quantit\'e
 $L'(a,I,0)$ ne d\'epend
que  de $I$ et non de $a$, et
s'\'ecrit comme un multiple rationnel du logarithme
d'une unit\'e fondamentale de $K$.
Quand $t> 2$, on a de plus  $L'(a,I,0)=0$, de sorte que
la Conjecture \ref{conj:stark} est v\'erifi\'ee avec
$u_a=1$.

Par cons\'equent, la Conjecture \ref{conj:stark} n'a de l'int\'er\^et
que lorsque  les places $v_2,\ldots, v_t$ sont toutes
r\'eelles, ce qui nous am\`ene \`a distinguer deux cas.

\sk\noindent {\bf 1. Le cas totalement r\'eel}. Si la place $v_1$ est \'egalement  r\'eelle,
 le corps $K$ est  {\em totalement r\'eel.}
A cause de la propri\'et\'e $2$ dans la Conjecture
\ref{conj:stark}, l'expression ${\tilde v_1}(u_a)$
 appartient alors \`a $\R$.
La  Conjecture
\ref{conj:stark} permet donc  d'\'evaluer ce nombre, du moins
au signe
pr\`es. On obtient ainsi, par \'evaluation des
d\'eriv\'ees en $s=0$ des s\'eries $L(a,I,s)$, la construction
analytique d'unit\'es explicites de $H$.
Les conjectures de
 Stark, une fois d\'emontr\'ees,
fourniraient ainsi un \'el\'ement de
``th\'eorie explicite du corps de classes"
  pour les corps de
nombres totalement r\'eels.

\sk\noindent {\bf 2. Le cas ATR.} Si $v_1$ est une place complexe,
on dit que $K$ est un  corps de nombres ATR (``Almost Totally
Real") suivant la terminologie de \cite{darmon-logan}.
 Puisque $c_{v_1}=1$, l'expression
${\tilde v_1}(u_a)$ n'a plus de raison d'\^etre
 r\'eelle {\em a priori},
 et la    Conjecture
\ref{conj:stark} ne permet  d'en \'evaluer que la
{\em valeur absolue}. L'ambigu\"{\i}t\'e de signe du
cas totalement r\'eel s'av\`ere donc
 plus s\'erieuse dans le contexte
ATR, puisqu'elle porte sur {\em l'argument} de
${\tilde v_1}(u_a)$, un \'el\'ement de $\R/(2\pi \Z)$. On est
amen\'e \`a poser la question suivante qui peut servir  de motivation
pour cet article.
\begin{mquestion}
\label{question:stark}
Existe-t-il une formule analytique
explicite pour l'expression
 ${\tilde v_1}(u_a)$ qui appara\^{\i}t dans la
 Conjecture \ref{conj:stark}, lorsque $K$ est ATR?
\end{mquestion}
Une r\'eponse affirmative \`a cette question fournirait une
solution  du $12$-\`eme probl\`eme de Hilbert pour les
extensions ATR.

Dans le premier cas intéressant o\`u $K$ est un corps cubique
complexe, cette question a \'et\'e consid\'er\'ee dans
\cite{dasgupta_senior} et dans \cite{dtv} o\`u la conjecture de
Stark est \'etudi\'ee num\'eriquement, et un progr\`es d\'ecisif a
\'et\'e accompli dans \cite{ren-sczech}.

Le pr\'esent article se penche sur  la Question
\ref{question:stark} lorsque
le corps $K$ est une extension quadratique d'un corps totalement
r\'eel $F$, et lorsque   le corps de rayon $H$  est remplac\'e par
un certain sous-corps---le  corps de classes d'anneau  (``ring class field")
associ\'e \`a $I$ et $K/F$--- dont la d\'efinition sera rappel\'ee
dans la Section \ref{sec:preliminaires}.

Pour motiver notre approche, examinons
ce qui se passe dans le cas le plus simple o\`u $F=\Q$,   o\`u $K$
est un corps quadratique imaginaire de nombre de classes
$1$, et o\`u $I=(m)$ est un id\'eal rationnel engendr\'e
par $m\in \Z$. Au lieu de  porter sur les s\'eries
partielles de Hurwitz,
 les conjectures de cet article vont plut\^ot porter sur les sommes
\begin{equation}
\label{eqn:Llattice}
 \sum_{r\in(\Z/m\Z)}
L(ar,I,s) =: L(M,s) = \ \
 (\N M)^s \!\!\!
\sum_{x\in M/\cO_{K,+}^\times(I)}
\!\!\!\!\!\!\!\prim\!\!\!\!\!| \N_{K/\mb Q}(x)|^{-s},
\end{equation}
o\`u
$$M = \{ x\in \cO_K, \mbox{ tel qu'il existe } r\in \Z \mbox{ avec }
x\equiv ar \pmod{I} \}.   $$
Le module $M$ est un r\'eseau dans $K\subset \C$,
et,
quand $a$ appartient \`a $\cG_I$, il
 forme m\^eme un
module projectif sur l'ordre $\cO_I:= \Z + I\cO_K$.
La s\'erie $L(M,s)$ ne d\'epend que de la classe d'homoth\'etie de ce
r\'eseau; elle est donc d\'etermin\'ee par
l'invariant $\tau =\frac{\omega_2}{\omega_1}$, o\`u $(\omega_1,\omega_2)$
est une $\Z$-base de $M$ choisie de telle mani\`ere que $\tau$
appartienne au demi-plan  de Poincar\'e $\cH$.
La {\em formule limite de Kronecker} fournit les premiers
termes du
d\'eveloppement de $L(M,s) $ en $s=0$ en la reliant au logarithme de la fonction $\eta$ de Dedekind :
\begin{equation}
\label{eq:LimKro}
L(M,s) = -\frac 12 - \frac 12 \left(c_I + \log \textrm{Im}\,(\tau)+
4 \log |\eta(\tau)|\right) s + O(s^2),\end{equation}
o\`u $c_I$ est une constante qui ne d\'epend que de $I$ et pas de
$M.$ \`A des facteurs parasites pr\`es, les d\'eriv\'ees
premi\`eres $L'(M,0)$ sont donc fournies par l'expression
$\log|\eta(\tau)|$. Or, comme $\tau$ appartient \`a $\cH\cap K$,
les produits d'expressions de la forme $\eta(\tau)$ donnent lieu
aux unit\'es elliptiques, que l'on sait \^etre des unit\'es dans
des extensions ab\'eliennes du corps $K$ gr\^ace \`a la th\'eorie
de la multiplication complexe. Plus pr\'ecis\'ement, pour
$\tau_1,\tau_2\in \cH\cap K$, les
expressions de la forme
\begin{equation}
\label{eqn:multeu}
 u(\tau_1,\tau_2):= \eta(\tau_2)/\eta(\tau_1)
\end{equation}
sont des nombres alg\'ebriques, et leurs puissances
$24$-\`emes sont des unit\'es dans des extensions ab\'eliennes de $K$.
 C'est ainsi que les propri\'et\'es
de la fonction $\eta$ et la th\'eorie de la multiplication
complexe permettent non seulement de d\'emontrer la conjecture de
Stark dans le cas o\`u $K$ est quadratique imaginaire, mais
apportent aussi une r\'eponse  \`a la Question
\ref{question:stark} dans ce cas.

Dans la g\'en\'eralisation ``traditionnelle" de la th\'eorie de la multiplication
complexe propos\'ee par Hilbert et son \'ecole, puis
d\'evelopp\'ee rigoureusement par Shimura et Taniyama, on
est amen\'e \`a
remplacer $\Q$ par un corps $F$ totalement r\'eel de degr\'e $n>1$
 (de sorte que
$F\otimes_\Q\R = \R^n$), et les formes modulaires
classiques par des {\em formes modulaires de Hilbert}.
Celles-ci correspondent \`a des fonctions holomorphes sur
$\cH^n$, invariantes (\`a un facteur d'automorphie pr\`es)
sous l'action naturelle de $\SL_2(\cO_F)$.
Les corps quadratiques imaginaires sont  remplac\'es par
des  extensions quadratiques $K$ de $F$ de type CM,
munies d'une identification
$\Phi:K\otimes_F \R^n \rightarrow \C^n.$
La th\'eorie de la multiplication complexe affirme alors
que les valeurs de certaines fonctions modulaires de Hilbert (quotients de formes
modulaires de m\^eme poids, poss\'edant des d\'eveloppements
de Fourier rationnels)
en des points  $\tau\in \Phi(K)\cap \cH^n$
 engendrent des extensions ab\'eliennes du corps
reflex  ${\tilde K}$  associ\'e \`a  $(K,\Phi)$.
Cette th\'eorie poss\`ede deux inconv\'enients du point de vue de la
Question \ref{question:stark}:
\begin{itemize}
\item[(a)]
elle ne permet d'aborder la ``th\'eorie du corps de classes explicite"
 que pour les corps de base de type
CM;
\item[(b)]
 elle ne permet pas d'obtenir  facilement
 des unit\'es dans des extensions
ab\'eliennes de  ${\tilde K}$,
 les unit\'es modulaires n'ayant pas
de g\'en\'eralisation \'evidente pour les formes modulaires de Hilbert.
En effet,  quand
$n>1$, le faisceau structural  sur l'espace complexe analytique
$\cX:= \SL_2(\cO_F)\backslash\cH^n$ ne poss\`ede pas de sections globales
non-nulles.
La relation entre la th\'eorie de Shimura-Taniyama et les conjectures
de Stark pour les corps CM (\`a supposer qu'il y en ait une) reste donc \`a
\'elucider. (Cf.~par exemple \cite{deshalit-goren} et
\cite{goren-lauter}.)
\end{itemize}

Pour aborder la Question \ref{question:stark} lorsque
 $K$ est une extension quadratique ATR d'un corps $F$ totalement
r\'eel de degr\'e $n>1$,
il faut  relever le nombre r\'eel $ \log |\tilde v_1(u_{a})|$ en un nombre
complexe $ \log \tilde v_1(u_{a})\in \C/2i\pi \Z.$
Au vu de la formule limite de Kronecker
(\ref{eq:LimKro}) et de la discussion  pr\'ec\'edente,
il s'agit donc de
g\'en\'eraliser l'expression
$$\log \eta(\tau) = \log |\eta(\tau)| + i \arg \eta(\tau) \in \C/2i\pi \Z $$ \`a un cadre
 o\`u les formes modulaires classiques sont
 remplac\'ees par des
formes modulaires de Hilbert sur $F$.
C'est ce qui a \'et\'e entrepris dans la th\`ese du premier auteur, qui part de l'identit\'e classique $\textrm{d}\log\eta(z)=-i\pi  E_2(z) dz$, o\`u $E_2$ est la s\'erie d'Eisenstein d\'efinie par
$$ E_2(z) = -\frac{1}{12} + 2 \sum_{n\ge 1} \sigma_1(n) e^{2i\pi n z},
\ \ \  \mbox{ avec }\, \sigma_1(n) = \sum_{d|n} d.$$

La formule (\ref{eqn:multeu}) peut donc se r\'e\'ecrire en prenant le
logarithme complexe des deux c\^ot\'es:
\begin{equation}
\label{eqn:logeu}
 \log(u(\tau_1,\tau_2))= -i\pi  \int_{\tau_1}^{\tau_2}E_2(z) dz \  \in  \C/2i\pi \Z.
\end{equation}
Or, si les unit\'es modulaires n'admettent pas d'analogue \'evident en
 dimension sup\'erieure, les s\'eries d'Eisenstein, elles, se
g\'en\'eralisent sans difficult\'e \`a ce contexte.
La section \ref{sec:eisenstein} rappelle la d\'efinition de la
 s\'erie d'Eisenstein $E_2$  de poids $(2,\ldots,2)$ sur
$\SL_2(\cO_F)$. Celle-ci donne lieu \`a  une $n$-forme
diff\'erentielle  $\omega_{E_2}$ holomorphe sur l'espace analytique
$\cX$.

La d\'emarche sugg\'er\'ee par \cite{charollois} consiste  essentiellement
 \`a remplacer les
valeurs de $\log \eta(\tau)$ par des int\'egrales  de $\omega_{E_2}$ sur des
cycles appropri\'es de dimension r\'eelle  $n$ sur $\cX$.
Plus pr\'ecis\'ement, le
 pr\'esent article associe \`a tout $\tau\in \cH\cap v_1(K)$
un cycle ferm\'e $\Delta_\tau$ de dimension r\'eelle $(n-1)$
sur $\cX$. En faisant abstraction pour le moment des ph\'enom\`enes li\'es
\`a la pr\'esence possible de torsion dans la cohomologie de $\cX$,
on d\'emontre que ces cycles sont homologues \`a z\'ero.  Autrement dit, il existe une
cha\^ine diff\'erentiable $C_\tau$
 de dimension $n$ sur $\cX$  telle que
$$ \partial C_\tau = \Delta_\tau.$$
Cela permet de d\'efinir un invariant canonique $J_\tau\in \C/2i\pi \Z$ en
int\'egrant un multiple appropri\'e de $\omega_{E_2}$ sur $C_\tau$.
La contribution la plus importante de cet article est
la Conjecture \ref{conj:principale}, qui
relie  les invariants $J_\tau$ \`a
l'expression $\log {\tilde v_1}(u_a)$ dont la partie r\'eelle appara\^{\i}t
dans la Conjecture \ref{conj:stark}. La Conjecture \ref{conj:principale}
apporte ainsi
un \'el\'ement de r\'eponse \`a la Question 1.

\medskip
La d\'efinition des invariants $J_\tau$
   s'appuie de fa{\c c}on essentielle sur
la th\`ese du premier auteur \cite{charollois}. Elle est 
 aussi \`a rapprocher de deux autres travaux ant\'erieurs:
\begin{enumerate}
\item L'article \cite{darmon-logan}, o\`u les s\'eries d'Eisenstein
sur $\GL_2(F)$ du pr\'esent article sont remplac\'ees par des
formes modulaires de Hilbert {\em cuspidales} de poids $(2,\ldots,2)$.
Dans les cas les plus
concrets qui ont pu \^etre test\'es num\'eriquement,
ces formes sont associ\'ees
\`a des courbes elliptiques
 d\'efinies sur $F$. L'invariant $J_\tau$ d\'efini  dans ce contexte
 semble alors permettre la construction
de points
 sur ces courbes d\'efinis sur certaines extensions ab\'eliennes de $K$.
\item
L'article \cite{darmon-dasgupta} peut  \^etre lu comme une variante $p$-adique
des constructions principales du pr\'esent article.
Dans le contexte de \cite{darmon-dasgupta},
on a $F=\Q$ et
le r\^ole de la place $v_1$  est  jou\'e par une place non-archim\'edienne $p$.
L'extension  $K$ est alors un corps quadratique
{\em r\'eel} dans lequel le nombre premier $p$ est inerte.
Les s\'eries d'Eisenstein de poids $2$ sur certains
sous-groupes de congruence de $\SL_2(\Z)$,
r\'einterpr\'et\'ees   convenablement
comme des ``formes modulaires de Hilbert" sur $\cH_p\times \cH$,
o\`u $\cH_p = \PP_1(\C_p)-\PP_1(\Q_p)$ est le demi-plan $p$-adique,
donnent alors lieu \`a des
invariants $p$-adiques  $J_\tau\in \C_p$  associ\'es \`a
$\tau\in\cH_p\cap K$.  Ces invariants
  correspondent conjecturalement \`a des $p$-unit\'es
dans des extensions ab\'eliennes de $K$.
\end{enumerate}
En se pla{\c{c}}ant dans un cadre  classique o\`u l'on dispose
 de notions topologiques et analytiques g\'en\'erales
  (homologie et cohomologie
singuli\`ere, th\'eorie de Hodge),  le pr\'esent article
m\`ene \`a une clarification conceptuelle des diff\'erentes constructions
de ``points et unit\'es de Stark-Heegner" propos\'ees jusqu'\`a pr\'esent
 dans la
litt\'erature.  (Cf. \cite{darmon-logan} et \cite{darmon-dasgupta}, ainsi que
le cadre  trait\'e originalement dans \cite{darmon1} ou
  les g\'en\'eralisations formul\'ees dans
\cite{trifkovic} et \cite{greenberg}.)
Le pr\'esent article peut tr\`es bien servir
d'introduction aux
travaux sur les points de  Stark-Heegner
cit\'es en r\'ef\'erence, bien qu'il ait \'et\'e
r\'edig\'e apr\`es ceux-ci.

\bigskip
\noindent
{\bf Remerciements}:
Le premier auteur tient \`a remercier
chaleureusement Philippe Cassou-Nogu\`es et Martin Taylor
qui sont  \`a l'origine de  son travail de th\`ese.
Les deux auteurs ont aussi b\'en\'efici\'e de nombreux \'echanges
avec Samit Dasgupta au sujet du pr\'esent article.

\section{Notions pr\'eliminaires}
\label{sec:preliminaires}
\subsection{S\'eries d'Eisenstein}
\label{sec:eisenstein}
Soit $F$ un corps totalement r\'eel de degr\'e $n>1$ et
$S_F:= \{ v_1,\ldots,v_n\}$
son ensemble de places archim\'ediennes. On désigne par $\mc O_F$ l'anneau des entiers de $F$,
par $d_F$ son discriminant,  et par $R_F$ le r\'egulateur de $F.$

On supposera dans  la suite   de cet article que $F$ a
  nombre de classes 1 au sens
restreint, de sorte qu'il existe pour tout $1\le j \le n $ une unit\'e
$\epsilon^{(j)}\in\cO_F^\times$ avec
$$ v_j(\epsilon^{(j)})<0,\qquad \qquad
v_k(\epsilon^{(j)})>0 \quad \mbox{ si } k\ne j. $$
Pour tout $a\in F$, on  note $a_j:=v_j(a)$ son image dans $\R$  par le plongement
$v_j$,
et  $A_j=\left(\begin{array}{cc} a_j&b_j\\
c_j&d_j
\end{array}\right)$
l'image
dans $\SL_2(\R)$
d'une
 matrice $A=\left(\begin{array}{cc} a&b\\
c&d
\end{array}\right)$ de $\SL_2(F)$.
On identifiera librement $a$ avec le $n$-uplet $(a_1,\ldots, a_n)$ et $A$ avec
$(A_1,\ldots , A_n)$.
On obtient ainsi une action   par homographies du
groupe modulaire de Hilbert $\Gamma:=\SL_2(\mc O_F)$
  sur le produit $\mc H_1\times \cdots\times \mc H_n$
 de $n$ copies du demi-plan
de Poincar\'e.
Le quotient analytique
$$ \cX:= \Gamma\backslash (\mc H_1\times\cdots\times  \mc H_n) $$
s'identifie avec les points complexes d'un ouvert de Zariski
 d'une vari\'et\'e alg\'ebrique
projective lisse: la {\em vari\'et\'e modulaire de Hilbert} $X_F$
associ\'ee au corps  $F$.

\smallskip
 Une \textit{forme modulaire de Hilbert} de poids
$(2,\ldots,2)$ pour $\Gamma$ est une fonction holomorphe $f(z_1,\ldots, z_n)$
sur $\mc H_1\times\cdots\times \mc H_n$ telle que la forme différentielle
$$\omega_f:= f(z_1,\ldots  z_n)\,dz_1\wedge\cdots\wedge dz_n $$
soit invariante sous l'action de  $\Gamma$.
Autrement dit, pour tout $\left(\begin{array}{cc} a & b \\ c & d
\end{array}\right)\in \Gamma$, on exige que
$$ f\left( \frac{a_1z_1+b_1}{c_1z_1+d_1},\ldots,
\frac{a_nz_n+b_n}{c_nz_n+d_n}\right)
= (c_1z_1+d_1)^2\cdots (c_n z_n+d_n)^2
f( z_1,\ldots,  z_n).$$
Lorsque  $n>1$,  une telle fonction possède, d'apr\`es le principe de Koecher,
un développement en série de
Fourier \`a l'infini de la forme
$$f(z_1,\ldots, z_n)=a_f(0)+ \sum_{\mu\in \mc O_F\!, \ \mu \gg0
}\!\!\!a_f(\mu)\,e^{2i\pi \big(\frac {\mu_1} {\delta_1}z_1 + \cdots +\frac
{\mu_n} {\delta_n}z_n\big)},$$
o\`u  $\delta$ d\'esigne un générateur totalement
positif de la différente de $F/\Q.$

 La \textit{série d'Eisenstein holomorphe
$E_2$} de poids $2$
se définit sur $\mc H_1\times\cdots\times \mc H_n$ par le
développement en série de Fourier suivant:
\begin{equation}
\label{devE2}
E_{2}(z_1,\ldots, z_n)=\zeta_F(-1)+
 2^n\! \! \! \sum_{\mu\in \mc O_F\!, \ \mu \gg0
}\!\!\!\sigma_{1}(\mu)\,e^{2i\pi \big(\frac {\mu_1} {\delta_1}z_1 +\cdots +
\frac {\mu_n} {\delta_n}z_n\big)},\end{equation}
o\`u, pour un
entier $k$ donné, on a posé
$$\sigma_k(\mu)=\sum_{(\nu)|(\mu)} |N_{F/\mb Q}(\nu)|^k,$$ la sommation
portant sur les idéaux (principaux) entiers $(\nu)$ qui divisent
$(\mu).$ La fonction $E_2(z_1,\ldots,z_n)$ est une forme modulaire de
Hilbert de poids $(2,\ldots,2)$ pour $\Gamma$ (voir [VdG], chap. I.6).

On se donne  des coordonn\'ees r\'eelles $x_j, y_j$ sur
$\cX$ en posant $z_j = x_j + i y_j$, et l'on
 d\'efinit \`a partir de $E_2$ une forme diff\'erentielle invariante
$\omega_{\Eis}$
en posant
\begin{equation}
\label{defomegaEis}
\omega_{\Eis} := \left\{\begin{array}{ll} \frac{(2i\pi)^2}{\sqrt{d_F}}
\omega_{E_2}+ \frac{R_F}{2}( \frac {dz_1\wedge d\bar{z}_1}{y_1^2} -\frac {dz_2\wedge d\bar{z}_2}{y_2^2})\ & \textrm{ si } n=2,\\
\frac{(2i\pi)^n}{\sqrt{d_F}} \omega_{E_2} \ & \textrm{ si } n\geq 3.\end{array} \right.
\end{equation}
La $n$-forme diff\'erentielle $\omega_{\Eis}$ est ferm\'ee et, quand $n\geq 3$,  elle est holomorphe.
On va s'int\'eresser \`a sa classe dans la cohomologie
$H^n(\cX,\C)$
form\'ee \`a partir du complexe de deRham des formes
diff\'erentielles $C^\infty$ sur $\cX$.
Ce groupe de cohomologie est muni d'une action des op\'erateurs de
Hecke $T_\lambda$, o\`u les $\lambda$  parcourent les
id\'eaux de $\cO_F$. La forme diff\'erentielle
$\omega_{\Eis}$ est vecteur propre pour ces op\'erateurs.
Plus pr\'ecis\'ement, on a
$$ T_\lambda(\omega_{\Eis}) =  \sigma_1(\lambda)\omega_{\Eis}.$$
Pour tout $1\le j\le n$,
on dispose \'egalement
d'une involution $T_{v_j}$
sur l'espace r\'eel-analytique
$\mc X$
associ\'ee \`a la place $v_j$
(``op\'erateur de Hecke \`a l'infini"). Celle-ci se
d\'efinit
en posant
$$ T_{v_j}(z_1, \ldots, z_n) :=
 (\epsilon^{(j)}_1z_1, \ldots, \epsilon^{(j)}_j\bar{z}_j, \ldots,
\epsilon^{(j)}_nz_n).
$$
On appelle $T_{v_j}^*$ l'involution sur $H^n(\cX,\C)$ qui
s'en d\'eduit par ``pullback" sur les formes diff\'erentielles.
Les $n$ op\'erateurs $T_{v_j}^*$  et
les
  op\'erateurs de Hecke $T_\lambda$ engendrent  une alg\`ebre commutative  $\TT$ sur $\Z.$

On aura besoin dans la suite de certaines fonctions qui joueront le
r\^ole de ``primitives" de $E_2$. On introduit pour cela
 la fonction $h$ de Asai
\cite{Asai}   d\'efinie sur $\mc H^n$ par
\begin{equation}
\label{devhAsai}
h(z)=\frac{4(-\pi)^n \zeta_F(-1)}{R_F\sqrt{d_F}}
 y_1\cdots y_n
+ \frac{4\sqrt{d_F}}{2^{n}R_F}
\sum_{\mu\in
\, \mc O_F}\prim \!\!\!\!\!\!\!\sigma_{-1}(\mu) \, e^{2i\pi \left(\frac {\mu_1}
{\delta_1} x_1+\left|\frac{\mu_1}{\delta_1}\right|iy_1+\ldots+\frac {\mu_n}
{\delta_n} x_n+\left|\frac{\mu_n}{\delta_n}\right|iy_n\right)}.
\end{equation}
Il sera plus  commode de  travailler avec
$${\tilde h}(z):= \lambda_F h(z), \quad \mbox{ o\`u }
 \lambda_F:=  4^{n-1} R_F,$$
de sorte que
\begin{equation}
\label{devhAsaiprime}
{\tilde h}(z)=\frac{(-4\pi)^n \zeta_F(-1)}{\sqrt{d_F}}
 y_1\cdots y_n
+ 2^{n} \sqrt{d_F}
\sum_{\mu\in
\, \mc O_F}\prim \!\!\!\!\!\!\!\sigma_{-1}(\mu) \, e^{2i\pi\left(\frac {\mu_1}
{\delta_1} x_1+\left|\frac{\mu_1}{\delta_1}\right|iy_1+\ldots+\frac {\mu_n}
{\delta_n} x_n+\left|\frac{\mu_n}{\delta_n}\right|iy_n\right)}.
\end{equation}
Les fonctions $h(z)$ et ${\tilde h}(z)$ jouissent des propri\'et\'es suivantes:
\begin{enumerate}
\item
Elles sont harmoniques par rapport
\`a chacune des  variables $z_j$,  $1\leq j\leq n$;
\item
Elles satisfont les formules de transformation
\begin{eqnarray}
\label{eqn:transh}
 h(Az) &=&  h(z) - \log(|c_1 z_1+d_1|^2 \cdots |c_n z_n + d_n|^2), \\
\label{eqn:transhtilde}
 \tilde h(Az) &=&  \tilde h(z) - \lambda_F \log(|c_1 z_1+d_1|^2 \cdots |c_n z_n + d_n|^2).
\end{eqnarray}
\item
On a
\begin{eqnarray}
\label{eqn:dh}
\frac{\partial^n \tilde h(z_1,\ldots,z_n)}{\partial z_1\cdots \partial z_n} d z_1\wedge \cdots \wedge dz_n
&=&  \frac{(2i\pi)^n}{\sqrt{d_F}}\omega_{E_2}, \\
\label{eqn:dhbar}
\frac{\partial^n \tilde h(z_1,\ldots,z_n)}{\partial{z_1} \cdots\partial{\bar{z_j}}\cdots  \partial z_n} d {z_1} \wedge \cdots d\bar{z}_j\cdots \wedge dz_n
&=&  T_{v_j}^*\left(\frac{(2i\pi)^n}{\sqrt{d_F}}\omega_{E_2}\right).
\end{eqnarray}
\end{enumerate}
Toutes ces formules
se v\'erifient par un calcul direct,
sauf (\ref{eqn:transh}) et (\ref{eqn:transhtilde}).
Pour ces derni\`eres, voir  \cite{Asai},
Th\'eor\`eme 4.
\begin{lemma}
\label{premierlemme}
\label{lemmeRe_j(Eis)}
La forme $(\Id+T^*_{v_j})\omega_{\Eis}$
est exacte.
\end{lemma}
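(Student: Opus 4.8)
The strategy is to exhibit an explicit primitive for $(\Id+T^*_{v_j})\omega_{\Eis}$ built from the Asai function $\tilde h$. The idea is that $\omega_{\Eis}$ is obtained from $\tilde h$ by applying the holomorphic derivative $\partial^n/\partial z_1\cdots\partial z_n$ (up to the correction term $\tfrac{R_F}{2}(\tfrac{dz_1\wedge d\bar z_1}{y_1^2}-\tfrac{dz_2\wedge d\bar z_2}{y_2^2})$ when $n=2$, which I handle separately), while $T^*_{v_j}\omega_{\Eis}$ is obtained by applying instead the mixed derivative with $\partial/\partial z_j$ replaced by $\partial/\partial\bar z_j$, as recorded in (\ref{eqn:dh}) and (\ref{eqn:dhbar}). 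Adding these, one gets $\partial/\partial z_1\cdots(\partial/\partial z_j+\partial/\partial\bar z_j)\cdots\partial/\partial z_n$ applied to $\tilde h$; but $\partial/\partial z_j+\partial/\partial\bar z_j=\partial/\partial x_j$, so the sum is a total $x_j$-derivative of an $(n-1)$-times-differentiated version of $\tilde h$. Concretely, set
$$
\eta_j := \frac{\partial^{n-1}\tilde h}{\partial z_1\cdots\widehat{\partial z_j}\cdots\partial z_n}\,dz_1\wedge\cdots\widehat{dx_j}\cdots\wedge dz_n,
$$
an $(n-1)$-form on $\mc H^n$ (the hat meaning omission; note the $j$-th slot carries no differential). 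Then by (\ref{eqn:dh})+(\ref{eqn:dhbar}) one computes $d\eta_j = (\partial/\partial x_j)\big(\tfrac{\partial^{n-1}\tilde h}{\partial z_1\cdots\widehat{\partial z_j}\cdots\partial z_n}\big)\,dx_j\wedge(\text{rest}) = (\Id+T^*_{v_j})\big(\tfrac{(2i\pi)^n}{\sqrt{d_F}}\omega_{E_2}\big)$, up to sign bookkeeping from reordering wedge factors.

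The first step, then, is to carry out this differential-form computation carefully, tracking the signs coming from the position of $dx_j$ in the wedge product and from the identity $dz_j\wedge d\bar z_j = -2i\,dx_j\wedge dy_j$. The second step is to check invariance: for the expression $\int_\bullet \eta_j$ or rather the class to make sense on $\cX$, I need $\eta_j$ (or a suitable modification) to be $\Gamma$-invariant. Here the transformation formula (\ref{eqn:transhtilde}) is essential: $\tilde h(Az) = \tilde h(z) - \lambda_F\log(|c_1z_1+d_1|^2\cdots|c_nz_n+d_n|^2)$, and the correction term is a sum $\sum_k \log|c_kz_k+d_k|^2$ in which the $k$-th summand depends only on $z_k$. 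Applying the $(n-1)$ holomorphic derivatives $\partial/\partial z_k$ for $k\neq j$ kills every summand whose index lies in $\{1,\ldots,n\}\setminus\{j\}$ at least once differentiated — wait, more precisely it kills all summands except possibly the $j$-th, but the $j$-th summand $-\lambda_F\log|c_jz_j+d_j|^2$ is not differentiated in the $z_j$-direction at all in $\eta_j$, so it does survive as an additive term; however it depends only on $z_j$ (and $\bar z_j$), so when we then take $d$ to form $d\eta_j$, applying $\partial/\partial x_j$ to $\log|c_jz_j+d_j|^2$ gives a genuine contribution. I therefore expect to need the $n=2$ correction term precisely to cancel this, or to observe that after applying $\partial/\partial x_j$ the leftover is itself exact — this matching is the delicate point.

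The main obstacle will be exactly this invariance/sign reconciliation, especially the $n=2$ case where $\omega_{\Eis}$ carries the extra Maass-form piece $\tfrac{R_F}{2}(\tfrac{dz_1\wedge d\bar z_1}{y_1^2}-\tfrac{dz_2\wedge d\bar z_2}{y_2^2})$. For $n\geq 3$ the pieces $dz_k\wedge d\bar z_k/y_k^2$ do not appear and $\eta_j$ should be directly $\Gamma$-invariant once one checks that the surviving non-invariance of $\tilde h$ under $\Gamma$ contributes only a closed (indeed exact, being a function of one variable times its differential on $\mc H_j$, hence pulled back from a point after quotienting) term; I would phrase this as: the failure of $\eta_j$ to be invariant is $d$ of an explicit $(n-2)$-form, so $\eta_j$ descends to a well-defined class, and its $d$ descends to $(\Id+T^*_{v_j})\omega_{\Eis}$ on $\cX$. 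For $n=2$, I would choose the primitive to be $\eta_j$ plus an explicit $1$-form antiderivative of the relevant Maass piece (a multiple of $dx_k/y_k$ type expression, using that $d(dx_k/y_k)=dx_k\wedge dy_k/y_k^2$ up to constants, matched against $dz_k\wedge d\bar z_k/y_k^2$), and verify the combination is $\Gamma$-invariant using that the regulator factor $R_F$ was inserted into both $\lambda_F=4^{n-1}R_F$ and the definition of $\omega_{\Eis}$ precisely so these constants agree. Once the explicit invariant primitive is written down on $\cX$, exactness of $(\Id+T^*_{v_j})\omega_{\Eis}$ is immediate.
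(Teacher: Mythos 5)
Your strategy is the paper's own: take as primitive the $(n-1)$-form built from Asai's $\tilde h$ differentiated in every variable except the $j$-th, compute its exterior derivative via (\ref{eqn:dh}), (\ref{eqn:dhbar}) and harmonicity, check $\Gamma$-invariance with the transformation law (\ref{eqn:transhtilde}), and repair the case $n=2$ using the extra term in (\ref{defomegaEis}). However, your invariance analysis contains a concrete error that mislocates the delicate point. The $j$-th summand $-\lambda_F\log|c_jz_j+d_j|^2$ does \emph{not} survive in the coefficient of $\eta_j$: it is a function of $z_j,\bar z_j$ alone, and that coefficient is obtained by applying at least one derivative $\partial/\partial z_m$ with $m\neq j$ (such an $m$ exists as soon as $n\geq 2$), which annihilates it. The summands that can survive are those with $k\neq j$, each hit exactly once by $\partial/\partial z_k$; they vanish only if a second derivative in yet another variable is available, i.e. precisely when $n\geq 3$. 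Hence for $n\geq 3$ the form $\eta_j$ is $\Gamma$-invariant on the nose, and your fallback ``invariant up to an exact form'' is both unnecessary and, as stated, insufficient: a form invariant only modulo exact terms does not descend to $\cX$, so exactness on $\cX$ could not be concluded from it without a further cocycle-splitting argument. For $n=2$ the genuine obstruction is $A^*\eta-\eta=-4R_F\,\frac{c_2}{c_2z_2+d_2}\,dz_2$ (taking $j=1$), coming from the variable that \emph{is} differentiated once, not from the $j$-th one; it is removed by replacing $\eta$ by $\eta'=\bigl(\partial\tilde h/\partial z_2-2R_F/(iy_2)\bigr)dz_2$ as in (\ref{eqn:defeta2}), and the resulting extra term $-R_F\,dz_2\wedge d\bar z_2/y_2^2$ in $d\eta'$ is exactly what the correction in (\ref{defomegaEis}) absorbs, together with the fact that $dz_1\wedge d\bar z_1/y_1^2$ lies in the kernel of $\Id+T^*_{v_1}$. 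So your plan that ``the $n=2$ correction cancels the $\partial/\partial x_j$-contribution of $\log|c_jz_j+d_j|^2$'' targets a term that is actually zero and would leave the true $n=2$ discrepancy unaddressed until corrected as above.

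A smaller point: the intermediate identity $d\eta_j=(\partial/\partial x_j)(\cdot)\,dx_j\wedge(\text{rest})$ is not literally true, since the component $\partial_{y_j}(\cdot)\,dy_j$ does not vanish. The correct statement, which is what (\ref{deta}) records, is that harmonicity leaves exactly the two terms $\partial_{z_j}(\cdot)\,dz_j\wedge(\text{rest})$ and $\partial_{\bar z_j}(\cdot)\,d\bar z_j\wedge(\text{rest})$, identified respectively with $\frac{(2i\pi)^n}{\sqrt{d_F}}\,\omega_{E_2}$ and its image under $T^*_{v_j}$ by (\ref{eqn:dh}) and (\ref{eqn:dhbar}); the heuristic $\partial_{z_j}+\partial_{\bar z_j}=\partial_{x_j}$ is harmless at the level of operators applied to $\tilde h$, but the two resulting terms carry different wedge factors and must be kept separate.
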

\begin{proof}
Supposons que $j=1$ pour fixer les id\'ees et
all\'eger les notations.
\`A partir de la fonction  ${\tilde h}$, on d\'efinit la $(n-1)$ forme
diff\'erentielle sur $\cH^n$:
\begin{equation}
\label{eqn:defeta}
\eta=\frac{\partial^{n-1}\tilde h(z_1,\ldots,z_n)}
{\partial z_2 \cdots \partial z_n } dz_2\wedge \ldots \wedge dz_n.
\end{equation}
Quand $n>2$, la formule (\ref{eqn:transhtilde})  montre que
$\eta$ est  invariante sous $\Gamma,$ et  correspond donc \`a une $(n-1)$-forme
diff\'erentielle sur $\cX$.
Parce que ${\tilde h}$ est harmonique,
cette forme
 est de plus holomorphe par rapport aux variables
 $z_2,\ldots,z_n,$
d'o\`u la formule
\begin{equation}
\label{deta}
d\eta = 
\left(\frac{\partial^{n} \tilde h}{ \partial z_1 \cdots \partial z_n }dz_1\wedge \ldots\wedge  dz_n +
\frac{\partial^{n} \tilde h}{ \partial \bar{z_1}
\partial z_2\cdots \partial z_n}d\bar{z_1}\wedge dz_2\ldots \wedge dz_n\right).
\end{equation}
Le lemme  r\'esulte alors de
(\ref{eqn:dh}) et de (\ref{eqn:dhbar})  avec $j=1$.
\smallskip

Dans le cas o\`u $n=2$, la forme
$\eta$ d\'efinie par
(\ref{eqn:defeta})
n'est plus $\Gamma$-invariante.
En effet, si $A=\mat{a}{b}{c}{d}\in\Gamma$, on a
$$ A^*(\eta) = \eta - 4R_F\frac{c_2}{c_2 z_2 + d_2} dz_2. $$
Il convient alors de modifier la d\'efinition de $\eta$ en (\ref{eqn:defeta})
en posant cette fois
\begin{equation}
\label{eqn:defeta2} \eta':=\left(\frac{\partial  \tilde
h(z_1,z_2)}{ \partial z_2 \ }- \frac{2R_F}{iy_2}\right)dz_2.
\end{equation}
On d\'eduit de l'identit\'e (\ref{eqn:transhtilde}) que
 $\eta'$ est invariante sous $\Gamma.$
La formule (\ref{deta})  s'adapte sans
difficult\'e  \`a condition d'ajouter  la contribution de
$$d\left(\frac{-2R_F}{i y_2} dz_2\right)= -R_F\frac {dz_2\wedge d\bar{z}_2}{y_2^2}.$$
On  obtient
$$ (\Id+T^*_{v_1})\omega_{\Eis} = d {\eta'},$$
 puisque  la forme  $dz_1\wedge d\bar{z}_1/y_1^2$ est quant
\`a elle dans le noyau de $\Id+T_{v_1}^*.$
C'est ce calcul qui justifie le terme suppl\'ementaire
apparaissant  dans la d\'efinition (\ref{defomegaEis})  de
 $\omega_{\Eis}$ lorsque $n=2$.
\end{proof}

Pour $m\ge 0$,
on appelle $C_m^0(\mc X)$ le groupe  engendr\'e par les combinaisons lin\'eaires
formelles \`a coefficients dans $\Z$ des cycles
diff\'erentiables ferm\'es de dimension
r\'eelle $m$ sur $\cX$.
On d\'efinit le groupe
des p\'eriodes de $\omega_{\Eis}$ par
$$ \Lambda_{\Eis}:= \left\{ \int_C \omega_{\Eis} \quad \mbox{ pour } C
 \in C_n^0(\mc X) \right\}
 \subset \C.$$

\begin{proposition}
\label{propositionreseauperiode}
Le groupe $\Lambda_{\Eis}$  est un sous-groupe de $\C$  de rang un,
commensurable
avec $(2i\pi )^n \Z$.
\end{proposition}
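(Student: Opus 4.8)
The plan is to show that $\Lambda_{\Eis}$ is contained in a rank-one group commensurable with $(2i\pi)^n\Z$, and that it is nonzero.

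First I would prove the upper bound. The key point is that the class of $\omega_{\Eis}$ in $H^n(\cX,\C)$ lies in the Eisenstein part, and one expects $H^n_{\Eis}(\cX,\Q)$ in the appropriate Hecke-isotypic component to be one-dimensional over $\Q$; more concretely, $\omega_{\Eis}$ is an eigenvector for all $T_\lambda$ with eigenvalue $\sigma_1(\lambda)$ and for each $T^*_{v_j}$, and the subspace of $H^n(\cX,\C)$ cut out by these eigenvalue conditions is defined over $\Q$ and one-dimensional. Hence the de Rham class of $\omega_{\Eis}$ is an algebraic (indeed rational, up to a fixed scalar) multiple of a class in the image of $H^n(\cX,\Z)$; integrating against cycles in $C^0_n(\cX)$ then lands in a fixed rank-one subgroup of $\C$. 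To pin down that this subgroup is commensurable with $(2i\pi)^n\Z$ rather than some other line, I would use the explicit shape of $\omega_{\Eis}$: its ``leading" cohomological content is $\frac{(2i\pi)^n}{\sqrt{d_F}}\omega_{E_2}$, and by Lemma \ref{premierlemme} the operators $\Id+T^*_{v_j}$ kill its class, so only the ``totally holomorphic minus the mixed" combinations survive; the normalization in (\ref{defomegaEis}) (with the $R_F$ correction term when $n=2$) is designed precisely so that periods are rational multiples of $(2i\pi)^n$. Concretely, one can evaluate against a distinguished $n$-cycle — e.g.\ a translate of the image of a product of modular symbols / a Hirzebruch–Zagier type cycle, or the boundary torus at a cusp — where the integral of $\omega_{E_2}$ against it is computed from the constant term $\zeta_F(-1)$ (a nonzero rational) via a residue/Fourier-coefficient argument, giving a nonzero element of $(2i\pi)^n\Q$. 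That simultaneously gives the lower bound (the group is nonzero, hence of rank exactly one) and fixes the commensurability line.

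So the key steps, in order, are: (1) record that $[\omega_{\Eis}]\in H^n(\cX,\C)$ is a Hecke eigenclass with rational eigenvalue system, and that the corresponding Hecke-eigenspace $H^n(\cX,\Q)[\text{Eis}]$ is one-dimensional — this is where one cites that Eisenstein cohomology of Hilbert modular varieties in this weight is well understood (Harder's theory), or argues directly via the boundary cohomology of the Borel–Serre compactification; (2) conclude $[\omega_{\Eis}]\in \lambda\cdot H^n(\cX,\Z)$ for a fixed $\lambda\in\C^\times$, so $\Lambda_{\Eis}\subseteq \lambda\Z$ has rank $\le 1$; (3) compute one explicit nonzero period, using the Fourier expansion (\ref{devE2}) and the identity (\ref{eqn:dh}) relating $\omega_{E_2}$ to $\partial^n\tilde h$, to see that $\lambda$ may be taken in $(2i\pi)^n\Q^\times$ and that $\Lambda_{\Eis}\ne 0$.

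The main obstacle I expect is step (1) together with the precise rationality bookkeeping in step (3): showing the Eisenstein eigenspace is genuinely one-dimensional over $\Q$ (and not larger, which could happen from torsion phenomena or from several Eisenstein classes with the same Hecke eigenvalues — note the paper itself flags ``ph\'enom\`enes li\'es \`a la pr\'esence possible de torsion dans la cohomologie de $\cX$") and controlling the denominators so that the period lattice is literally \emph{commensurable} with $(2i\pi)^n\Z$ rather than merely contained in $(2i\pi)^n\C$. The cleanest route is probably to reduce to the boundary: restrict $\omega_{\Eis}$ to a neighborhood of the cusps, where $\omega_{E_2}$ is, up to the exact forms produced by $\tilde h$ as in Lemma \ref{premierlemme}, cohomologous to its constant term $\zeta_F(-1)\,dz_1\wedge\cdots\wedge dz_n$, and then integrate over the cusp tori $(\R/\cO_F)$-orbits to get $\zeta_F(-1)$ times an explicit power of $2i\pi$ times a volume factor involving $\sqrt{d_F}$ and $R_F$ that the normalization (\ref{defomegaEis}) was chosen to rationalize; the rationality of $\zeta_F(-1)$ (Siegel–Klingen) then closes the argument.
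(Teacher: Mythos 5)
Your proposal is correct and takes essentially the same route as the paper: rank $\le 1$ comes from Harder's decomposition and the rationality of the one-dimensional Eisenstein Hecke-eigenspace (the paper packages this as an idempotent $\pi_{\Eis}\in\TT\otimes\Q$ applied to the image of $H_n(\cX,\Z)$, together with a reality step via the involutions $T_{v_j}^*$), and nontriviality plus commensurability come from integrating over the cusp torus, where only the constant term contributes and gives $(2i\pi)^n\zeta_F(-1)\in(2i\pi)^n\Q^\times$. The only cosmetic difference is that you deduce containment in a single $\Z$-line directly from rationality of the eigenclass, whereas the paper argues discreteness of $\pi_{\Eis}(\Lambda)$ and containment in $(2i\pi)^n\R$ separately.
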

\begin{proof}
Cette proposition se d\'emontre en trois parties.

\sk\noindent
(a) On d\'emontre d'abord que
 le groupe $\Lambda_{\Eis}$ {\em est un sous-ensemble discret de
$\C$.}

La th\'eorie de Harder
\cite{harder} (cf.~le Th\'eor\`eme 6.3, Ch. III, \S 7 de \cite{freitag} avec $m=n$) fournit  la d\'ecomposition
\begin{equation}
\label{eqn:freitag}
H^n(\cX,\C) = H^n_{\rm univ}(\cX,\C) \oplus H^n_{\Eis}(\cX,\C) \oplus
H^n_{\rm cusp}(\cX,\C),
\end{equation}
o\`u $H^n_{\rm univ}(\cX,\C)$ provient des formes diff\'erentielles
$\SL_2(\R)^n$-invariantes,
$H^n_{\Eis}(\cX,\C)$  est l'espace vectoriel de dimension
$1$ engendr\'e par  $[\omega_{\Eis}]$,
et
$H^n_{\rm cusp}(\cX,\C)$ provient des formes modulaires cuspidales
de poids $(2,\ldots,2)$ sur $\cX$.
La d\'ecomposition (\ref{eqn:freitag})
 est respect\'ee par l'alg\`ebre de Hecke $\TT$,
et  les \'el\'ements  $\omega\in H^n_{\Eis}$ sont caract\'eris\'es
par les propri\'et\'es
$$ T_{v_j}^*\omega = -\omega, \qquad
T_\lambda \omega = (\N\lambda+1)\omega, \quad \mbox{ pour tout } \lambda\lhd \cO_F.$$
Il en r\'esulte que la projection
 naturelle
 $H^n(\cX,\C)\rightarrow H^n_{\Eis}(\cX,\C)$ issue de
(\ref{eqn:freitag}) est d\'ecrite par
  un idempotent $\pi_{\Eis}\in\TT\otimes\Q$.
Soit
 $\Lambda$ l'image naturelle de $H_n(\cX,\Z)$ dans
le dual $H^n(\cX,\C)^\vee:= \Hom(H^n(\cX,\C),\C)$
 par l'application des p\'eriodes.
C'est un sous-groupe discret  stable pour l'action de $\TT$.
On a de plus
$$ \Lambda_{\Eis} = \langle \omega_{\Eis}, \pi_{\Eis}(\Lambda)\rangle,$$
o\`u $\langle\ ,\ \rangle$ d\'esigne l'accouplement
naturel entre $H^n(\cX,\C)$ et son dual.
Or on a
$$\pi_{\Eis}(\Lambda) \subset \frac{1}{t}\Lambda \cap H^n_{\Eis}(\cX,\C)^\vee,$$
o\`u $t$ est  un entier tel que $t\pi_{\Eis}$ appartient \`a $\TT$.
Par cons\'equent  $\pi_{\Eis}(\Lambda)$ est un sous-groupe discret
de $H^n_{\Eis}(\cX,\C)^\vee$, ce qui implique que $\Lambda_{\Eis}$ est lui aussi discret.

\sk\noindent
(b) {\em Le groupe $\Lambda_{\Eis}$ est contenu dans $(2i\pi)^n\R$.}

En effet,
le Lemme \ref{premierlemme} implique que
$$ T_{v_1}^* \cdots T_{v_n}^* ([\omega_{\Eis}]) = (-1)^n [\omega_{\Eis}].$$
Par ailleurs, un calcul direct montre que
$$ T_{v_1}^* \cdots T_{v_n}^* ([\omega_{\Eis}]) = [\bar \omega_{\Eis}].$$
On en d\'eduit que $[\bar\omega_{\Eis}] = (-1)^n [\omega_{\Eis}]$.
Les p\'eriodes de $\omega_{\Eis}$ appartiennent donc bien
\`a $(2i\pi)^n \R$.

\sk\noindent
(c) {\em Fin de la d\'emonstration}

Les parties (a) et (b)
montrent que $\Lambda_{\Eis}$ est de rang au plus un.
Pour montrer qu'il est non-trivial et d\'eterminer sa classe de commensurabilit\'e,
il suffit de
calculer une p\'eriode non-nulle de $\omega_{\Eis}$.
Pour cela, on fixe $Y_1,\ldots, Y_n\in \R^{>0}$ et l'on consid\`ere
les droites horizontales $L_j\subset \cH_j$
form\'ees des $z_j$ dont la partie imaginaire est \'egale \`a $Y_j$.
La r\'egion $$ R_\infty:= L_1\times\cdots \times L_n$$
est pr\'eserv\'ee par le sous-groupe des translations $\Gamma_\infty\subset \Gamma.$ Soit $D_\infty$ un domaine fondamental compact pour cette
action. Son image dans $\cX$ est un cycle ferm\'e de dimension $n$. La forme diff\'erentielle
$\omega_{\rm Eis}$ peut s'int\'egrer terme \`a terme sur
$D_\infty$
\`a partir de la formule (\ref{devE2}). Seul le terme constant
dans la d\'efinition de $E_2$ apporte une contribution
non-nulle
\`a l'int\'egrale, puisque les autres termes sont des multiples de
caract\`eres non-triviaux de $R_\infty/\Gamma_\infty$.
Comme le volume de $R_\infty/\Gamma_\infty$ est \'egal \`a $\sqrt{d_F}$,
on en d\'eduit que
$$ \int_{D_\infty} \omega_{\Eis} = (2i\pi)^n \zeta_F(-1).$$
Ceci ach\`eve la d\'emonstration, puisque $\zeta_F(-1)$
appartient \`a $\Q^\times$.
\end{proof}

\subsection{Extensions quadratiques et s\'eries $L$ associ\'ees}\label{subsec:quad}
Soit $K$
 une extension quadratique de $F$.
Pour chaque $1\le j \le n$, la $\R$-alg\`ebre
 $K\otimes_{F,v_j} \R$  est isomorphe soit \`a $\C$, soit \`a
$\R\oplus \R$.
On fixe une telle identification, que l'on appelle aussi $v_j$ par abus de
notation.
Lorsque $K$ est un corps CM, la donn\'ee de $(v_1,\ldots, v_n)$ correspond
au choix d'un type CM associ\'e \`a $K$. On sait \`a quel point
cette donn\'ee
suppl\'ementaire joue un r\^ole important dans la th\'eorie de la multiplication
complexe pour les extensions CM de $F$.

On munit les $\R$-espaces vectoriels $\C$ et $\R\oplus \R$ de l'orientation
standard dans
 laquelle  une orientation positive est assign\'ee
aux bases $(1,i)$ et $((1,0),(0,1))$ de $\C$ et $\R\oplus \R$  respectivement.
Une base de $K$ (vu comme espace vectoriel sur $F$ de dimension $2$) est
alors dite {\em positive} si ses images dans $\C$ et $\R\oplus\R$ par les
plongements $v_j$  sont  orient\'ees positivement.
On remarque en particulier que la base $(1,\tau)$ de $K$ sur $F$ est
positive si et seulement si:
\begin{enumerate}
\item  On a $\tau_j'>\tau_j$ pour toute place r\'eelle $v_j$ de $F$;
\item  La partie imaginaire de $\tau_j$ est strictement
positive pour toute place complexe
$v_j$.
\end{enumerate}

Soit $I$ un id\'eal de l'anneau $\cO_F$. On se permet de d\'esigner  par le
m\^eme symbole l'id\'eal $I\cO_K$ de $K$.

On maintiendra tout au long de cet article l'hypoth\`ese
 que $F$ a pour nombre de classes $1$ au sens \'etroit.
Par contre, il  est souhaitable
 de ne pas avoir \`a faire d'hypoth\`ese semblable
sur le corps $K$.
On g\'en\'eralise la d\'efinition (\ref{eqn:defGIintro})
du groupe $\cG_I$ de l'introduction, en le d\'efinissant comme un
quotient appropri\'e du groupe
$\A_K^\times$  des id\`eles de $K$.
Pour chaque place
non-archim\'edienne $v$ de $K$, on appelle $\cO_v$ l'anneau des entiers du
corps local $K_v$, et l'on pose
$$ \cG_I:=  \A_K^\times/(\prod_v U_v) K^\times,$$
avec
$$ U_v = \left\{  \begin{array}{ll}
\R_{>0}^\times & \mbox{ si } v \mbox{ est r\'eelle}; \\
\C^\times & \mbox{ si } v \mbox{ est complexe}; \\
\cO_v^\times & \mbox{ si } v\nmid I; \\
1 + I \cO_v & \mbox{ si } v|I. \end{array} \right.
$$
Comme dans l'introduction, la
 loi de r\'eciprocit\'e du corps de classes donne un isomorphisme
$\rec:\cG_I\lra \gal(H/K)$, o\`u $H$ est le corps de classes de rayon de
$K$ au sens restreint associ\'e \`a $I$.

Le sous-corps $F$ permet d'introduire un sous-groupe
$\cG_I^+\subset \cG_I$, d\'efini comme
 l'image naturelle dans $\cG_I$ du groupe
$\A_F^\times$.
Le sous-corps $H_I$
de $H$ fix\'e par $\rec(\cG_I^+)$ s'appelle le
{\em corps de classes d'anneau} associ\'e \`a $I$ et $K/F$.
On a donc l'isomorphisme de r\'eciprocit\'e
$$ \rec: G_I \lra  \Gal(H_I/K), \quad\mbox{o\`u   }
G_I:= \A_K^\times/(\A_F^\times \prod_v U_v K^\times).$$

Un $\cO_F$-ordre de $K$ est un sous-anneau de $K$ qui contient
$\cO_F$ et qui est localement
libre de rang $2$ sur $\cO_F$ (donc libre, puisque $h(F)=1$).
On d\'esigne par
 $\cO_I:= \cO_F + I \cO_K$ l'ordre de $K$ de conducteur $I$,
et l'on appelle
$$\hat\cO_I = \prod_{v\nmid\infty} \cO_I\otimes \cO_{F,v}$$
 son ad\'elisation.
On a alors
$$ G_I = \A_K^\times/(\hat\cO_I^\times \prod_{v|\infty} U_v K^\times).$$
Ce quotient est en bijection naturelle avec le groupe
$\Pic^+(\cO_I)$  des modules projectifs de rang 1 sur $\cO_I$ dans $K$, modulo
l'\'equivalence au sens restreint.
(Deux modules projectifs $M_1$ et $M_2$
 sur $\cO_I$ sont dits \'equivalents au sens restreint s'il
existe  un \'el\'ement totalement positif
$k\in K_+^\times$ tel que $M_2=k M_1$).
On  associe en effet \`a tout $\alpha\in G_I$ un  $\cO_I$-module  
$M\subset K$ en posant
$$ M^\alpha:= \alpha \hat{\cO}_I \cap K. $$
L'application $\alpha\mapsto M^\alpha$ fournit une bijection
naturelle entre $G_I$ et les classes d'\'equivalence au sens restreint
de $\cO_I$-modules projectifs:
\begin{equation}
\label{eqn:giproj}
G_I \, \simarrow
 \left\{
\begin{array}{l}
\mbox{Classes d'\'equivalence} \\
\mbox{au sens restreint} \\
\mbox{de $\cO_I$-modules projectifs}
\end{array} \right\}.
\end{equation}

Soit $V:= \cO_{I,1}^\times\subset \cO_I^\times$  le groupe des unit\'es de
$\cO_I$ de norme $1$ sur $\Q$.
Il
laisse stable le module $M$.
On a la suite exacte
$$ 0\lra V_1 \lra V\lra \cO_{F,1}^\times,$$ o\`u   $V_1$  d\'esigne le sous-groupe des
unit\'es de $V$ de norme relative  $1$ sur $F$.  On note alors  ${\tilde V}$  le sous-groupe de $V$ engendr\'e par
$V_1$ et $\mc O_F^\times$,
et l'on pose
$$ \delta_I := [V:{\tilde V}].$$
Cet indice est un diviseur de $2^n$.
 On d\'efinit finalement la fonction $L(M, s)$ associ\'ee
\`a un $\cO_I$-module projectif $M$ en posant
\begin{equation}
\label{eqn:LMs}
L(M,s) :=   (\N M)^s \delta_I\!\! \sum_{x\in M/V}
\prim\!\!\!\!\textrm{signe}(N_{K/\Q}(x))
 | \N_{K/\Q}(x)|^{-s}.
\end{equation}
\begin{remark}

a) 

Parce que $\sign(N_{K/\Q}(x))=1$ quand
$K$ est quadratique imaginaire, 
il en r\'esulte que
la d\'efinition (\ref{eqn:LMs})
g\'en\'eralise l'\'equation
(\ref{eqn:Llattice}) de l'introduction.

b)  Quand $K$ est une extension ATR de $F$
ayant $v_1$ pour unique place complexe, la fonction $L(M,s)$ est un multiple rationnel
non-nul de la  somme de fonctions $L$ partielles de Hurwitz de l'introduction.
Plus pr\'ecis\'ement,
si $a$ est un g\'en\'erateur de $\cO_K/(\cO_F,I)$ en tant que
$(\cO_F/I)$-module, et que $M_a$ d\'esigne le $\cO_I$-module projectif
$$ M_a := \{ x\in\cO_K \quad \mbox{tel qu'il existe } r\in \cO_F \mbox{ avec }
x \equiv r a \pmod{I} \},$$
alors
\begin{equation}
\label{eqn:LlatticeF}
L(M_a,s) = \delta_I \!\! \sum_{r\in(\cO_F/I\cO_F)}
L(ra,I,s).
\end{equation}
\end{remark}

Comme dans l'introduction,
on v\'erifie  que la fonction $L(M,s)$ ne d\'epend que de la classe d'\'equivalence
de $M$ au sens restreint, puisque
$$ L(\lambda M,s) =\textrm{signe}(N_{K/\Q}(\lambda))
  L(M,s).$$

Dans les Sections \ref{sec:TR} et \ref{sec:ATR} qui suivent, nous allons  exprimer les valeurs
sp\'eciales $L(M,0)$ quand $K$ est totalement r\'eel, et les d\'eriv\'ees $L'(M,0)$
quand $K$ est ATR, en fonction de p\'eriodes appropri\'ees de la forme diff\'erentielle
$\omega_{\Eis}$.

\section{Extensions quadratiques  totalement
r\'eelles et val\-eurs de fonctions $L$}
\label{sec:TR}
On supposera dans cette section que l'extension quadratique
 $K$ de $F$ est totalement r\'eelle.
On veut rappeler un th\'eor\`eme qui appara\^it dans la th\`ese du premier auteur et qui
donne une formule explicite pour $L(M,0)$ dans ce cas.

L'hypoth\`ese que $F$ a nombre de classes $1$ au sens restreint
implique que le module $M$ est libre de rang $2$ comme module sur
$\cO_F$, et qu'il existe une $\cO_F$-base 
$(\omega_1,\omega_2)$   de $M.$
On suppose que cette base est choisie de sorte que $(1,\omega_2/\omega_1)$ 
soit orient\'ee positivement. 
 L'invariant
 $\tau:=\omega_2/\omega_1$
 appartient 
\`a $K^\times$, et il ne d\'epend que de la classe
d'\'equivalence de $M$ au sens restreint, \`a l'action de $\Gamma$ pr\`es.
Le groupe $\Gamma_\tau\subset \Gamma$ form\'e des matrices qui
fixent $\tau$ est un groupe
de rang $n$ (modulo torsion), que
l'application
$$ A = \mat{a}{b}{c}{d} \mapsto c\tau + d$$
identifie
 avec le sous-groupe $V_1$ des
unit\'es de $V$ de norme relative  $1$ sur $F$. Pour chaque $1\le j\le n$, on pose
$$ (\tau_j,\tau_j'):=v_j(\tau) \in \R\times\R,$$
et l'on appelle
 $\Upsilon_j$ la g\'eod\'esique hyperbolique sur $\cH_j$
joignant $\tau_j'$ \`a $\tau_j$, orient\'ee dans le sens
allant de $\tau_j'$ \`a $\tau_j$. Le produit
$$ R_\tau = \Upsilon_1 \times \Upsilon_2\times\cdots\times\Upsilon_n\subset \mc H ^n$$
est un espace contractile hom\'eomorphe \`a $\R^{n}$.
On le munit de l'orientation naturelle h\'erit\'ee des
$\Upsilon_j$.  Le groupe $\Gamma_\tau$ op\`ere
sur $R_\tau$ par transformations
de M\"obius, et le quotient $\Gamma_\tau\backslash R_\tau$
est compact,
isomorphe \`a un tore r\'eel de dimension $n$.
Soit $\Delta_\tau$ un domaine fondamental pour l'action de
$\Gamma_\tau$ sur $R_\tau$.
On identifie $\Delta_\tau$ avec son image dans $\cX$, qui est un cycle ferm\'e
dans ce quotient.
\begin{theorem}\label{thm:realmain}
Pour tout $\cO_I$-module projectif  $M$ dans $K,$ on a:
\begin{equation}
\label{lienLomegatotreel}
(-2)^n \int_{\Delta_\tau} \omega_{\Eis} =  (2i\pi)^n L(M, 0).
\end{equation}
\end{theorem}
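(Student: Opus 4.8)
The plan is to reduce the statement to a term-by-term integration of the Fourier expansion of $\omega_{E_2}$ over the torus $\Gamma_\tau\backslash R_\tau$, exactly as in the computation of the nonzero period in Proposition~\ref{propositionreseauperiode}(c), but now against the characters attached to a module $M$ rather than against the trivial character. First I would unwind the left-hand side: since $K$ is totally real, the primitive function $\tilde h$ (which is harmonic in each variable and satisfies the transformation law (\ref{eqn:transhtilde})) can be used to trivialize $\omega_{\Eis}$ on a simply connected cover. On the contractible region $R_\tau=\Upsilon_1\times\cdots\times\Upsilon_n$ the relevant $(n-1)$-form $\eta$ (resp.\ $\eta'$ when $n=2$) of Lemma~\ref{premierlemme} is defined, and since $\Gamma_\tau\cong V_1$ acts on $R_\tau$ with compact quotient $\Delta_\tau$, Stokes' theorem gives
\[
\int_{\Delta_\tau}\omega_{\Eis}=\int_{\Delta_\tau} d\eta
=\int_{\partial\Delta_\tau}\eta .
\]
The boundary $\partial\Delta_\tau$ consists of faces identified in pairs by elements of $\Gamma_\tau$; the transformation law (\ref{eqn:transhtilde}) shows that the discrepancy of $\eta$ across a pair of identified faces is an explicit logarithmic term $-\lambda_F\,d\log\!\big(\prod_j(c_jz_j+d_j)\big)$ contracted appropriately, and the sum of these boundary contributions over the $n$ pairs of faces of the fundamental parallelepiped collapses to a regulator-type determinant.

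Second I would identify that boundary sum with $L(M,0)$. The key input is the classical unfolding that expresses the partial zeta value as a period: writing $M=\omega_1(\mathbf Z+\mathbf Z\tau)\otimes\cdots$ — more precisely using the $\cO_F$-basis $(\omega_1,\omega_2)$ with $\tau=\omega_2/\omega_1$ — the Hecke $L$-series $L(M,s)$ of (\ref{eqn:LMs}) is, after the Hecke/Shimizu integral representation, $(\N M)^s\delta_I$ times a sum over $x\in M/V$ of $|\N x|^{-s}$ with the sign factor. Evaluating at $s=0$ and matching with the $\zeta_F(-1)$-term plus the boundary logarithm contributions computed above yields the factor $(2i\pi)^n$ on the right and the combinatorial factor $(-2)^n$ on the left; the $(-2)^n$ is exactly $(-1)^n$ from the $n$-fold antisymmetry of the wedge together with the $2^n$ coming from the pairing of opposite faces (the same $2^n$ visible in $\omega_{\Eis}=\frac{(2i\pi)^n}{\sqrt{d_F}}\omega_{E_2}+\cdots$ and in $E_2$'s Fourier expansion). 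The constant $\zeta_F(-1)$ that appeared alone in Proposition~\ref{propositionreseauperiode} reappears here as the leading term and is absorbed into $L(M,0)$ via the functional equation / Siegel's formula for partial zeta values of totally real fields. The index $\delta_I=[V:\tilde V]$ enters precisely because $\Gamma_\tau$ corresponds to $V_1$ while the $L$-function sums over $M/V$; the ratio $V/\tilde V$ accounts for the difference between integrating over the $V_1$-torus and the full $V$-orbit, and the $\cO_F^\times$-part of $\tilde V$ contributes the sign characters $s_{v_j}$.

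I expect the main obstacle to be the case $n=2$, where $\eta$ is not $\Gamma$-invariant and must be replaced by $\eta'$ of (\ref{eqn:defeta2}) with the extra term $-2R_F/(iy_2)\,dz_2$; one must check that the non-holomorphic correction $-R_F\,dz_2\wedge d\bar z_2/y_2^2$ added to $\omega_{E_2}$ in the definition (\ref{defomegaEis}) of $\omega_{\Eis}$ integrates over $\Delta_\tau$ to exactly the regulator term $R_F$ needed to complete the Kronecker-limit-type formula, with no leftover period. Concretely, $\int_{\Delta_\tau} dz_2\wedge d\bar z_2/y_2^2$ is (up to sign) twice the hyperbolic area of $\Upsilon_2$ modulo the appropriate cyclic subgroup of $\Gamma_\tau$, which is a length of a closed geodesic and hence a regulator of a fundamental unit of $V_1$; pinning down that this equals $R_F/\!\lambda_F$ up to the right rational constant is the delicate point. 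A secondary technical issue is orientation bookkeeping: one must verify that the positive orientation on $R_\tau$ inherited from the $\Upsilon_j$ (going from $\tau_j'$ to $\tau_j$, consistent with the positivity convention $\tau_j'>\tau_j$ on the base $(1,\tau)$) produces the sign $(-2)^n$ rather than $2^n$ or $(-1)^{n(n-1)/2}2^n$. Everything else is the routine term-by-term Fourier computation already rehearsed in Proposition~\ref{propositionreseauperiode}, together with Siegel's rationality of $\zeta_F(-1)$ and the elementary arithmetic of the index $\delta_I$.
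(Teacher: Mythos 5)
Your proposal breaks down at its central step. You write $\int_{\Delta_\tau}\omega_{\Eis}=\int_{\Delta_\tau}d\eta=\int_{\partial\Delta_\tau}\eta$, but $\omega_{\Eis}$ is \emph{not} exact: Lemme \ref{premierlemme} only gives $d\eta=(\Id+T_{v_1}^*)\,\omega_{\Eis}$ (and $d\eta'$ for $n=2$), and the symmetrized form is genuinely different from $\omega_{\Eis}$ here, since the cycle $\Delta_\tau$ is not preserved by $T_{v_1}$ (that involution sends $\tau$ to a conjugate point). Worse, in the totally real case $\Delta_\tau$ is a \emph{closed} $n$-cycle in $\cX$ (the image of a fundamental domain for the compact torus $\Gamma_\tau\backslash R_\tau$), so if $\omega_{\Eis}$ admitted a $\Gamma$-invariant primitive the left-hand side of (\ref{lienLomegatotreel}) would vanish identically, contradicting the generically nonzero value $L(M,0)$; this is exactly why the periods of $\omega_{\Eis}$ form a nontrivial lattice (Proposition \ref{propositionreseauperiode}). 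The Stokes/boundary-discrepancy mechanism you describe is the one used in the ATR case (Th\'eor\`eme \ref{thm:atrmain}), where $\Delta_\tau$ has dimension $n-1$ and one integrates the exact form $\omega_{\Eis}^+$ over a chain $C_\tau$; it does not apply to Th\'eor\`eme \ref{thm:realmain}. Two further points are off: the analogy with Proposition \ref{propositionreseauperiode}(c) fails because the nonconstant Fourier terms of $E_2$ are characters of the translation torus at the cusp, not of the unit torus $\Delta_\tau$, so they do not drop out term by term (their contribution is precisely what builds $L(M,0)$); and in the case $n=2$ the corrections $dz_j\wedge d\bar z_j/y_j^2$ integrate to zero on $\Delta_\tau$ for the trivial reason that each such $2$-form involves only the coordinates of one factor $\cH_j$, whose image in $\Delta_\tau$ is a curve --- no regulator computation is needed.

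The paper's actual route is different and avoids any exactness argument: Th\'eor\`eme \ref{theo:ppalannexe} computes, by Hecke's unfolding trick (Siegel), the period $\int_{\Delta_\tau}\frac{\partial^{n}E(z,s)}{\partial z_{1}\cdots\partial z_{n}}\,dz_1\wedge\cdots\wedge dz_n$ of the real-analytic Eisenstein series and identifies it with an explicit $\Gamma$-factor times $d_F^{-s}L(M,s)$; then the functional equation and Asai's generalized Kronecker limit formula give the Taylor expansion of $E(z,s)$ at $s=0$, whose leading coefficient is $\frac{s^n}{2^n}\,\partial^n\tilde h/\partial z_1\cdots\partial z_n$ (Corollaire \ref{cor:final} with $r=n$, $c=0$), and the pointwise identity (\ref{eqn:dh}) converts $\partial^n\tilde h/\partial z_1\cdots\partial z_n\,dz_1\wedge\cdots\wedge dz_n$ into $\frac{(2i\pi)^n}{\sqrt{d_F}}\,\omega_{E_2}$, hence into $\omega_{\Eis}$ up to the $n=2$ terms that vanish on $\Delta_\tau$. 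If you want a correct proof along your lines, you would have to redo this unfolding; the Stokes shortcut cannot be repaired.
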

\begin{proof}
C'est une cons\'equence du corollaire \ref{cor:final} qui est
d\'emontr\'e dans la derni\`ere partie de cet article.
 Puisque $K$ est une extension quadratique totalement
r\'eelle de $F,$ on choisit $r=n\geq 2$ et $c=0$ dans la formule
(\ref{L0r2}). Elle s'\'ecrit alors
\begin{equation}
L(M, 0)=\frac{i^n}{\pi^n}\int_{\Delta_\tau}
\frac{\partial^n \tilde h(z_1,\ldots,z_n)}{\partial z_1\cdots \partial z_n} d z_1 \cdots dz_n.
\end{equation}
D'apr\`es  l'identit\'e  (\ref{eqn:dh}), on  en d\'eduit  que
\begin{equation}
L(M,
0)=\frac{i^n}{\pi^n}\int_{\Delta_\tau}\frac{(2i\pi)^n}{\sqrt{d_F}}\omega_{E_2}.
\end{equation}
 La formule  (\ref{lienLomegatotreel}) en r\'esulte imm\'ediatement lorsque $n>2$ vu la d\'efinition (\ref{defomegaEis})
de $\omega_{\textrm{Eis}}.$ Cette formule reste encore  valable
pour $n=2$ puisque les int\'egrales des formes $dz_1\wedge
d\bar{z}_1/y_1^2$ et $dz_2\wedge d\bar{z}_2/y_2^2$ sur le cycle
$\Delta_\tau$   sont nulles.
\end{proof}

\bigskip

\begin{corollary}
Pour tout r\'eseau $M$ dans $K$, les valeurs sp\'eciales
$L(M,0)$ sont rationnelles. Plus pr\'ecis\'ement, il existe une constante enti\`ere $e_F,$ ne   d\'ependant  que du corps $F$ et pas de l'extension $K$ ni de $M,$  telle que $e_FL(M, 0)\in \Z.$ \end{corollary}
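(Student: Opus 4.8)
Le plan est de déduire ce corollaire du Théorème \ref{thm:realmain} et de la Proposition \ref{propositionreseauperiode}, le point crucial étant que le groupe des périodes $\Lambda_{\Eis}$ ne dépend que de $F$, et non de l'extension $K$ ni du réseau $M$.

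D'abord j'observe que l'espace $\cX$, la forme différentielle $\omega_{\Eis}$, et donc le sous-groupe $\Lambda_{\Eis}\subset\C$, sont entièrement déterminés par le seul corps $F$. La Proposition \ref{propositionreseauperiode} affirmant que $\Lambda_{\Eis}$ est commensurable avec $(2i\pi)^n\Z$, le groupe fini $\Lambda_{\Eis}/(\Lambda_{\Eis}\cap(2i\pi)^n\Z)$ a un ordre $e_F\ge 1$ ne dépendant que de $F$, et l'on a $e_F\,\Lambda_{\Eis}\subset(2i\pi)^n\Z$. C'est cet entier qui conviendra.

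Ensuite, pour $M$ un $\cO_I$-module projectif de $K$ d'invariant $\tau$, le cycle $\Delta_\tau$ étant fermé de dimension réelle $n$ dans $\cX$, la période $\int_{\Delta_\tau}\omega_{\Eis}$ appartient à $\Lambda_{\Eis}$ par définition de ce groupe. Le Théorème \ref{thm:realmain} se réécrit
$$ L(M,0)=\frac{(-2)^n}{(2i\pi)^n}\int_{\Delta_\tau}\omega_{\Eis}, $$
et en écrivant $e_F\int_{\Delta_\tau}\omega_{\Eis}=(2i\pi)^n m$ avec $m\in\Z$, il vient
$$ e_F\,L(M,0)=\frac{(-2)^n}{(2i\pi)^n}(2i\pi)^n m=(-2)^n m\in\Z, $$
d'où en particulier $L(M,0)\in\Q$. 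Pour un réseau $M$ quelconque de $K$, j'observerais que son ordre de multiplicateurs $\cO=\{\lambda\in K:\lambda M\subseteq M\}$ est un $\cO_F$-ordre, donc de la forme $\cO_I$ puisque $F$ est de nombre de classes $1$ au sens restreint, et que $M$ est alors automatiquement un $\cO_I$-module projectif (un réseau d'une extension quadratique est toujours inversible sur son ordre de multiplicateurs); la construction de $\tau$, de $\Gamma_\tau$, du cycle $\Delta_\tau$ et le Théorème \ref{thm:realmain} s'appliquent à $M$ pour cet ordre, et l'argument ci-dessus donne $e_F\,L(M,0)\in\Z$.

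Il n'y a pas ici de difficulté sérieuse : tout le contenu arithmétique réside dans le Théorème \ref{thm:realmain} (identité entre valeur spéciale et période) et dans la Proposition \ref{propositionreseauperiode} (nature du réseau des périodes). Le seul point à surveiller est l'uniformité de $e_F$ en $K$, qui tient à ce que $\omega_{\Eis}$ et $\Lambda_{\Eis}$ vivent sur la variété modulaire de Hilbert attachée à $F$ seul ; le reste est formel.
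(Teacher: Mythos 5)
Votre démonstration est correcte et suit essentiellement la même démarche que celle de l'article : combiner le Théorème \ref{thm:realmain} avec la Proposition \ref{propositionreseauperiode}, le point clé étant que $\Lambda_{\Eis}$ ne dépend que de $F$ et est commensurable avec $(2i\pi)^n\Z$. Votre rédaction est simplement plus détaillée (construction explicite de $e_F$ comme indice, et réduction du cas d'un réseau quelconque au cas d'un $\cO_I$-module projectif via l'ordre des multiplicateurs), ce que l'article laisse implicite.
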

\begin{proof}
Cela r\'esulte de ce que les p\'eriodes de
$\omega_{\Eis}$, d'apr\`es la
Proposition  \ref{propositionreseauperiode},  appartiennent \`a un
r\'eseau $\Lambda_{\Eis}\subset (2i\pi)^n \Q$ qui ne d\'epend que du corps $F.$
\end{proof}

\section{Extensions quadratiques  ATR et d\'eriv\'ees de fonctions $L$}
\label{sec:ATR}
\noindent

On suppose dans cette section  que l'extension quadratique
 $K$ de $F$ est ATR, et que $v_1$ se prolonge en une place complexe de $K$.
On veut
donner dans ce cas une formule explicite pour $L'(M, 0),$ lorsque
$M$ est un $\cO_I$-module projectif dans $K$.

Comme dans la section pr\'ec\'edente,  on pose
$\tau:= \omega_2/\omega_1$,
o\`u $(\omega_1,\omega_2)$ est une $\cO_F$-base positive de $M$,
 choisie de  sorte
que $(1,\tau)$ soit positivement orient\'ee. 
On pose ensuite
$$ \left\{
\begin{array}{lll}
\tau_1:= v_1(\tau) & \in \cH_1 &  \\
(\tau_j,\tau_j'):= v_j(\tau) & \in \R\times \R, &  \mbox{ pour } j=2,\ldots,n.
\end{array} \right.
$$
Le nombre complexe  $\tau_1$ appartient alors
\`a $\cH_1$.  Pour
 chaque $2\le j\le n$,
 on appelle
 $\Upsilon_j$ la g\'eod\'esique hyperbolique de  $\cH_j$
joignant $\tau_j$ \`a $\tau_j'$, orient\'ee dans le sens
allant de $\tau_j$ \`a $\tau_j'$. Le produit
$$ R_\tau = \{\tau_1\} \times \Upsilon_2\times\cdots\times\Upsilon_n\subset \mc H^n$$
est un espace contractile hom\'eomorphe \`a $\R^{n-1}$, que l'on
 munit de l'orientation naturelle h\'erit\'ee des
$\Upsilon_j$.
Le stabilisateur $\Gamma_\tau$ de  $\tau$
dans $\Gamma$ est  un groupe
de rang $(n-1)$ modulo torsion, que
l'on peut identifier avec le sous-groupe d'unit\'es relatives  $V_1$
introduit pr\'ec\'edemment.
Il op\`ere
 sur $R_\tau$ par transformations
de M\"obius, et le  quotient $\Gamma_\tau\backslash R_\tau$ est compact,
isomorphe \`a un tore r\'eel de dimension $(n-1)$.
Soit $\Delta_\tau$ un domaine fondamental pour
 l'action de $\Gamma_\tau$ sur $R_\tau$.
On identifie $\Delta_\tau$ avec son image
dans $\cX$, qui est un cycle ferm\'e de dimension
$(n-1)$ dans ce quotient.
\begin{lemma}\label{lemmaDeltatrivial}
La classe de $\Delta_\tau$ dans $H_{n-1}(\cX,\Z)$ est de torsion.
En particulier, il existe une
$n$-cha\^ine diff\'erentiable  $C_\tau$ \`a coefficients dans $\Q$ telle que
\begin{equation}\label{defCtau}\partial C_\tau = \Delta_\tau.\end{equation}
\end{lemma}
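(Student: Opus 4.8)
On proc\'edera en ramenant d'abord l'\'enonc\'e \`a une assertion cohomologique. Puisque $\cX$ a le type d'homotopie d'un CW-complexe fini (par exemple celui de son compactifi\'e de Borel--Serre), l'accouplement de de Rham
$$ H_{n-1}(\cX,\Q)\times H^{n-1}(\cX,\Q)\lra \Q,\qquad (C,[\omega])\longmapsto \int_C\omega$$
est parfait; la classe $[\Delta_\tau]$ sera donc de torsion dans $H_{n-1}(\cX,\Z)$ si et seulement si $\int_{\Delta_\tau}\omega=0$ pour toute $(n-1)$-forme diff\'erentielle ferm\'ee $\omega$ de classe $C^\infty$ sur $\cX$, ce qu'il suffira de v\'erifier lorsque $[\omega]$ parcourt une base de $H^{n-1}(\cX,\C)$.

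On invoquera ensuite la version en degr\'e $n-1$ de la d\'ecomposition (\ref{eqn:freitag}) de Harder--Freitag \cite{harder}, \cite{freitag}. Comme la cohomologie cuspidale de la vari\'et\'e modulaire de Hilbert $\cX$ (de dimension r\'eelle $2n$) est concentr\'ee en degr\'e m\'edian $n$, on disposera de la d\'ecomposition
$$ H^{n-1}(\cX,\C)=H^{n-1}_{\rm univ}(\cX,\C)\oplus H^{n-1}_{\Eis}(\cX,\C),$$
et il restera \`a montrer que $\Delta_\tau$ s'accouple trivialement avec chacun des deux facteurs. L'ingr\'edient g\'eom\'etrique commun sera l'observation suivante: comme la coordonn\'ee $z_1$ est constante sur $R_\tau=\{\tau_1\}\times\Upsilon_2\times\cdots\times\Upsilon_n$, et comme chaque g\'eod\'esique $\Upsilon_j$ ($j\ge 2$) est de dimension r\'eelle $1$, la restriction \`a $R_\tau$ d'une $(n-1)$-forme tue toute composante contenant une diff\'erentielle en la variable $z_1$, ou deux diff\'erentielles relatives \`a une m\^eme variable $z_j$; pour des raisons de degr\'e, seules subsistent les composantes du type $g\,dw_2\wedge\cdots\wedge dw_n$ avec $dw_j\in\{dz_j,d\bar{z}_j\}$ pour chaque $j$.

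Le cas universel sera alors imm\'ediat: une telle classe est repr\'esent\'ee par un polyn\^ome en les $2$-formes invariantes $\frac{dz_j\wedge d\bar{z}_j}{y_j^2}$ ($1\le j\le n$), dont chaque mon\^ome utilise \`a la fois $dz_j$ et $d\bar{z}_j$ pour chacune des variables qu'il fait intervenir; aucune composante du type pr\'ec\'edent n'y figure, de sorte que la restriction \`a $R_\tau$ est nulle et $\int_{\Delta_\tau}\omega=0$. Le c{\oe}ur technique sera le traitement du facteur de Serre--Eisenstein. Comme $F$ est de nombre de classes $1$ au sens restreint, $\cX$ ne poss\`ede qu'une pointe, et le calcul de la cohomologie du bord de Borel--Serre montrera que $H^{n-1}_{\Eis}(\cX,\C)$ est de dimension au plus $1$; il s'agira de v\'erifier que cette \'eventuelle classe exceptionnelle s'accouple trivialement avec $\Delta_\tau$, soit en exploitant les repr\'esentants explicites de Harder (attach\'es \`a une s\'erie d'Eisenstein provenant d'un caract\`ere de Hecke, et dont le comportement au voisinage de la pointe ne fait intervenir que des combinaisons des $(n-1)$-formes $\bigwedge_{j\ne k}\frac{dy_j}{y_j}$), soit en d\'eterminant le syst\`eme de valeurs propres de Hecke de $[\Delta_\tau]$ et en le comparant \`a ceux qui apparaissent effectivement dans $H^{n-1}(\cX,\C)$. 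C'est ce dernier point --- qui exige de contr\^oler assez finement la cohomologie eisensteinienne en degr\'e $n-1$ --- qui constituera l'obstacle principal.

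On en conclura que $[\Delta_\tau]=0$ dans $H_{n-1}(\cX,\Q)$. Il existera donc une cha\^ine singuli\`ere \`a coefficients rationnels de bord $\Delta_\tau$; quitte \`a remplacer $\Gamma$ par un sous-groupe d'indice fini sans torsion et \`a faire appel \`a l'application de transfert, on pourra la supposer lisse, ce qui fournira la $n$-cha\^ine diff\'erentiable $C_\tau$ \`a coefficients dans $\Q$ v\'erifiant $\partial C_\tau=\Delta_\tau$ annonc\'ee dans l'\'enonc\'e.
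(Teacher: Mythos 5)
Your overall strategy -- pair $\Delta_\tau$ against a basis of $H^{n-1}(\cX,\C)$ and show all the integrals vanish -- is the same as the paper's, and your treatment of the invariant (universal) classes is exactly the paper's argument: on $R_\tau$ the coordinate $z_1$ is constant and each $\Upsilon_j$ is one-dimensional, so any form involving both $dz_j$ and $d\bar z_j$ for some $j$, or any differential in $z_1$, restricts to zero; hence every class built from the forms $dz_j\wedge d\bar z_j/y_j^2$ pairs trivially with $\Delta_\tau$.

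The genuine gap is the Eisenstein summand you allow in degree $n-1$. You posit a decomposition $H^{n-1}(\cX,\C)=H^{n-1}_{\rm univ}\oplus H^{n-1}_{\Eis}$ with $\dim H^{n-1}_{\Eis}\le 1$, and then you only sketch two possible strategies (explicit boundary representatives, or a comparison of Hecke eigenvalue systems) for showing that this hypothetical class pairs trivially with $\Delta_\tau$, conceding yourself that this is ``l'obstacle principal''. As written, nothing in your argument rules out a nonzero period of such a class on $\Delta_\tau$, so the proof is incomplete precisely at this point; note also that in the case $n$ even your residual Eisenstein class would be the \emph{only} potentially nonvanishing contribution, since the universal part in odd degree is zero, so the gap is not a marginal one. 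The missing ingredient is the structural fact the paper invokes from Harder's theory as presented in \cite{freitag} (Ch.~III): for a Hilbert modular variety the cohomology in every degree $q<n$ is purely universal -- there is no Eisenstein and no cuspidal contribution below the middle degree -- so that $H^{n-1}(\cX,\C)$ is spanned by the invariant forms $\eta_S$ when $n-1$ is even, and vanishes when $n-1$ is odd (equivalently $H_{n-1}(\cX,\Q)=0$ for $n$ even, which is how the paper disposes of that case at once). With that input your restriction argument on $R_\tau$ finishes the proof; without it, the Eisenstein piece remains unaccounted for. (Your final smoothing discussion via a torsion-free subgroup and transfer is unnecessary -- once $[\Delta_\tau]$ is torsion, a rational differentiable chain with boundary $\Delta_\tau$ exists by standard smoothing of singular chains -- but it is not where the problem lies.)
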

\begin{proof}
Le sous-groupe de torsion de $H_{n-1}(\cX,\Z)$ s'identifie avec le
noyau de l'application naturelle $H_{n-1}(\cX,\Z)\lra
H_{n-1}(\cX,\Q)$. Lorsque $n$ est pair, le Lemme
\ref{lemmaDeltatrivial} r\'esulte de ce que  $H_{n-1}(\cX,\Q)=0$ (cf.
\cite{freitag}, Ch.~III).  De m\^eme,
lorsque $n=2m+1$ est impair, le groupe $H^{n-1}(\mc X,\C)$ est
engendr\'e par les classes des $\left( \begin{array}{l} n \\  m
\end{array}\right)$ formes diff\'erentielles du type
$$ \eta_S:= \bigwedge_{j\in S} \frac{dz_j \wedge d\bar z_j}{y_j^2}, \qquad S\subset \{1,\ldots,n\}, \quad
\#S = m.$$
Or on voit que les restrictions de ces classes sur
$\Delta_\tau$ (et m\^eme sur les r\'egions $R_\tau$) sont nulles,
puisque
la projection de $R_\tau$ sur chaque facteur $\mc H_j$ est de dimension r\'eelle
 $0$ ou $1$.
On en d\'eduit par la dualit\'e de Poincar\'e
 que l'image de $\Delta_\tau$ dans $H_{n-1}(\cX,\C)$ est nulle.
\end{proof}

On introduit  $\omega_{\rm Eis}^{+}=\frac{ 1}{2}(\Id+T_{v_1}^*)\omega_{\Eis}$
la projection de la forme diff\'erentielle
$\omega_{\Eis}$ sur l'espace propre de
$T_{v_1}^*$ associ\'e \`a la valeur
propre $+1,$ autrement dit la ``partie r\'eelle
pour la place $v_1$" de la forme $\omega_{\rm Eis}.$
\begin{theorem}
\label{thm:atrmain}
Soit
$M$ un $\cO_I$-module projectif associ\'e \`a $\tau\in K$.
L'int\'egrale de $\omega_{\Eis}^{+}$ sur $C_\tau$ ne d\'epend pas du choix de $C_\tau$ v\'erifiant (\ref{defCtau}),
et l'on a
\begin{equation}
\label{eqn:Lstarkperiode}
(-2)^{n-1} \int_{C_\tau} \omega_{\mathrm{Eis}}^{+}  = (2i\pi)^{n-1}L'(M,0).
\end{equation}
\end{theorem}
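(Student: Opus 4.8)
The plan is to reduce the statement to the totally real case (Theorem \ref{thm:realmain}) by an ``opening up the geodesic'' argument, combined with the fact that $\omega_{\Eis}^{+}$ is closed. First I would establish that the integral $\int_{C_\tau}\omega_{\Eis}^{+}$ is independent of the choice of $C_\tau$ satisfying $\partial C_\tau = \Delta_\tau$. If $C_\tau$ and $C_\tau'$ are two such chains, then $C_\tau - C_\tau'$ is an $n$-cycle (with $\Q$-coefficients), hence defines a class in $H_n(\cX,\Q)$; since $\omega_{\Eis}^{+}$ is a closed $n$-form, $\int_{C_\tau - C_\tau'}\omega_{\Eis}^{+}$ depends only on the class $[\omega_{\Eis}^{+}]\in H^n(\cX,\C)$. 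But $\omega_{\Eis}^{+} = \tfrac12(\Id + T_{v_1}^*)\omega_{\Eis}$, and by Lemma \ref{premierlemme} the form $(\Id+T_{v_1}^*)\omega_{\Eis}$ is \emph{exact}; therefore $[\omega_{\Eis}^{+}] = 0$ in $H^n(\cX,\C)$ and the integral vanishes on any cycle. This gives well-definedness, and moreover shows that I am free to replace $\omega_{\Eis}^{+}$ by any convenient primitive computation.

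Next I would produce an explicit chain $C_\tau$. Recall from Section \ref{sec:ATR} that $R_\tau = \{\tau_1\}\times\Upsilon_2\times\cdots\times\Upsilon_n$, where $\tau_1 = v_1(\tau)\in\cH_1$. The natural thing is to let $\tau_1$ vary: choose a path $\gamma_1$ in $\cH_1$ joining $\tau_1$ to $\bar\tau_1$ (for instance the vertical geodesic, or more symmetrically an arc fixed setwise by $z\mapsto \epsilon^{(1)}_1\bar z$), and set
\begin{equation}
\label{eqn:Ctauchoice}
C_\tau := \Delta_\tau' \subset \Gamma_\tau\backslash\bigl(\gamma_1\times\Upsilon_2\times\cdots\times\Upsilon_n\bigr),
\end{equation}
a fundamental domain for the $\Gamma_\tau$-action on $\gamma_1\times\Upsilon_2\times\cdots\times\Upsilon_n$. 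The boundary of $C_\tau$ consists of the two ``end faces'' $\{\tau_1\}\times\Upsilon_2\times\cdots$ and $\{\bar\tau_1\}\times\Upsilon_2\times\cdots$; the first is $\pm\Delta_\tau$, and the second is $\pm T_{v_1}(\Delta_{\bar\tau})$, where $\bar\tau$ is the Galois conjugate of $\tau$ over $F$ (the lattice $\bar M$). Using $T_{v_1}^*\omega_{\Eis} = -\omega_{\Eis}$ on $H^n$ together with the exactness from Lemma \ref{premierlemme}, and the relation $L(\bar M,s) = \operatorname{signe}(N_{K/\Q}(\cdot))L(M,s)$ from the end of Section \ref{subsec:quad}, the contribution of the second end face can be matched with that of the first. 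I would also need the intermediate ``side'' contributions (from $\partial\Upsilon_j$, i.e. the cusps $\tau_j,\tau_j'$) to cancel in pairs under $\Gamma_\tau$, exactly as in the compactness argument showing $\Gamma_\tau\backslash R_\tau$ is a torus.

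With this explicit $C_\tau$ in hand, the final step is a direct computation paralleling the proof of Theorem \ref{thm:realmain}: invoke Corollary \ref{cor:final} / formula (\ref{L0r2}) from the last part of the paper, now with $r = n-1$ (one archimedean place of $K$ is complex) and the appropriate $c$, to express $L'(M,0)$ as $\tfrac{i^{\,?}}{\pi^{\,?}}$ times an integral over $\Delta_\tau$ of a mixed partial derivative of the Asai function $\tilde h$; the passage from a vanishing-order-one situation to a derivative is precisely what introduces the extra complex variable $z_1$ and turns the $(n-1)$-dimensional integral over $\Delta_\tau$ into an $n$-dimensional integral over $C_\tau$, via (\ref{eqn:dh})--(\ref{eqn:dhbar}). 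Identifying the resulting $n$-form with $\omega_{E_2}$, hence with $\omega_{\Eis}$ up to the correction terms of (\ref{defomegaEis}) which integrate to zero on $C_\tau$ (their pullbacks involve only $d\bar z_1\wedge dz_1$ or $d\bar z_2\wedge dz_2$ and $C_\tau$ has real dimension $\le 1$ in at most... — here one checks the relevant projections have too small dimension), and symmetrizing with $T_{v_1}^*$ to land on $\omega_{\Eis}^{+}$, yields (\ref{eqn:Lstarkperiode}) with the constant $(-2)^{n-1}$ and $(2i\pi)^{n-1}$.

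The main obstacle will be the careful bookkeeping in the boundary computation for $C_\tau$: one must verify that the side faces coming from $\partial\gamma_1$ are genuinely the pieces $\Delta_\tau$ and $T_{v_1}(\Delta_{\bar\tau})$ with the correct orientations, that the ``vertical'' faces coming from $\partial\Upsilon_j$ glue up and cancel under the $\Gamma_\tau$-identifications (this uses that $\Gamma_\tau\cong V_1$ acts with compact quotient on each $\Upsilon_j$ in a way compatible across factors), and — the genuinely delicate point — that the correction terms in (\ref{defomegaEis}) for $n=2$ really do integrate to zero against the $2$-chain $C_\tau$, since now $C_\tau$ is $2$-dimensional and a priori could pair nontrivially with $dz_j\wedge d\bar z_j/y_j^2$; the saving grace is that $C_\tau$ is supported on $\gamma_1\times\Upsilon_2$ whose projection to $\cH_1$ is $1$-dimensional, so $dz_1\wedge d\bar z_1/y_1^2$ restricts to zero, while the $j=2$ term is killed by the $(\Id+T_{v_1}^*)$-symmetrization since it already lies in the $+1$-eigenspace and is exact... — this last point may require the analogue of Lemma \ref{premierlemme} for $j=1$ applied in reverse, or an explicit primitive, and is where I would expect to spend the most effort.
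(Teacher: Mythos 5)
Your opening paragraph is fine and coincides with the paper's argument: since $\omega_{\Eis}^{+}=\tfrac12(\Id+T_{v_1}^{*})\omega_{\Eis}$ is exact by the Lemme \ref{premierlemme}, the integral over any $C_\tau$ with $\partial C_\tau=\Delta_\tau$ is well defined. But you stop short of the immediate consequence that makes the rest of your plan unnecessary: writing $\omega_{\Eis}^{+}=\tfrac12\,d\eta$ (with $\eta$ as in (\ref{eqn:defeta}) for $n>2$, replaced by $\eta'$ of (\ref{eqn:defeta2}) when $n=2$), Stokes gives $\int_{C_\tau}\omega_{\Eis}^{+}=\tfrac12\int_{\Delta_\tau}\eta$ for \emph{any} admissible $C_\tau$, and the Corollaire \ref{cor:final} applied with $c=1$, $r=n-1$ (partie i) pour $n>2$, partie ii) pour $n=2$) evaluates this boundary integral: $L'(M,0)=\frac{i^{n-1}}{2\pi^{n-1}}\int_{\Delta_\tau}\eta$, which yields (\ref{eqn:Lstarkperiode}) with the stated constants. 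That is the whole proof. In particular your framing as a ``reduction to the totally real case'' is off target: Th\'eor\`eme \ref{thm:realmain} concerns a \emph{different} field $K$ (signature $(2n,0)$), and there is no passage from the ATR situation to it; both theorems are parallel specializations of Th\'eor\`eme \ref{theo:ppalannexe}/Corollaire \ref{cor:final} to different signatures $(2r,c)$.

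The centerpiece of your proposal --- the ``opened-up'' chain supported on $\gamma_1\times\Upsilon_2\times\cdots\times\Upsilon_n$ --- has a genuine gap: its boundary is not $\Delta_\tau$ but $\Delta_\tau$ minus a second end face, which is another torus of exactly the same kind, so the chain does not satisfy (\ref{defCtau}); ``matching'' the second face's contribution to the first via $T_{v_1}^{*}$ does not remove it from the boundary, and evaluating the integral over a chain bounding that second face is precisely the problem you are trying to solve, so as written the step is circular. Moreover your treatment of the $n=2$ correction terms is incorrect: the form $dz_2\wedge d\bar z_2/y_2^2$ lies in the $+1$-eigenspace of $T_{v_1}^{*}$, so the symmetrization $(\Id+T_{v_1}^{*})$ does \emph{not} kill it --- it survives into $\omega_{\Eis}^{+}$, and the reason everything still works (for every choice of $C_\tau$, not just one whose projections to the factors are one-dimensional) is that this term is exactly compensated by the extra term $-\frac{2R_F}{iy_2}dz_2$ in the modified primitive $\eta'$ of (\ref{eqn:defeta2}), matched on the $L$-function side by the corresponding correction in (\ref{L0r1}). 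Finally, the constants you leave as question marks are pinned down by (\ref{L0r2}) and (\ref{L0r1}); since you never carry out that bookkeeping, the equality (\ref{eqn:Lstarkperiode}) with the factors $(-2)^{n-1}$ and $(2i\pi)^{n-1}$ is not actually established in your argument.
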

\begin{proof}
La premi\`ere assertion d\'ecoule du fait que $\omega_{\Eis}^{+}$
est exacte:
le Lemme \ref{lemmeRe_j(Eis)} montre que $\omega_{\Eis}^{+}=d\eta/2.$ Le calcul se poursuit en
utilisant   le th\'eor\`eme de Stokes pour obtenir
\begin{equation}\label{eq:Stokes1}\int_{C_\tau} \omega_{\Eis}^{+}=\int_{\Delta_\tau} \frac \eta 2.\end{equation}
Supposons tout d'abord que $n>2,$ de sorte que $\eta$ est  la
($n-1$)-forme holomorphe  sur  $\cX$ donn\'ee  par la formule
(\ref{eqn:defeta}). Comme $K$ est une extension ATR, on choisit
ici $r=n-1\geq 2$ et $c=1$  dans le corollaire \ref{cor:final}.i).
La formule correspondante s'\'ecrit
\begin{equation}
\label{LM0primeintermediaire} L'(M,0)= \frac{i^{n-1}}{2\pi^{n-1}}
\int_{\Delta_\tau}  \eta,
\end{equation}
 ce qui permet de conclure.

 \smallskip
Il reste \`a traiter   le cas o\`u $n=2$
 en faisant cette fois appel au corollaire \ref{cor:final}.ii).
 On choisit $r=1$ et $c=1$ dans la formule (\ref{L0r1}) qui
 s'\'ecrit

 \begin{equation*}\label{L'M0intermediaire}L'(M,0)=\frac{i}{2\pi}
\int_{\Delta_\tau} \left( \frac {\partial \tilde{h}}{\partial
z_2}-\frac{4R_F} {z_2-\bar{z}_2}\right) dz_2.
\end{equation*}
 Au vu de la d\'efinition (\ref{eqn:defeta2}) de la forme  $\eta$ pour $n=2,$ cette   \'egalit\'e se r\'eduit \`a   $$\int_{\Delta_\tau} -  \eta= (2i\pi )L'(M, 0).$$
 La formule de Stokes permet de nouveau  de conclure \`a la formule souhait\'ee.
 \end{proof}

\section{Application d'Abel-Jacobi et unit\'es de Stark}
\label{sec:conjecture}

Quand on combine le Th\'eor\`eme \ref{thm:atrmain} avec la Conjecture \ref{conj:stark}  de Stark,
on obtient la formule conjecturale suivante pour le   logarithme
du module de l'unit\'e de Stark:
\begin{equation}
\label{eqn:suggestive}
 e_I(-2)^{n-1}\int_{C_\tau} \omega_{\rm Eis}^+ = 2\delta_I
(2i\pi)^{n-1} \log|{\tilde v_1}(u_\tau)|,
\end{equation}
o\`u $e_I$ d\'esigne l'ordre du groupe des racines de l'unit\'e dans  $H_I.$
Pour relever
l'invariant r\'eel  $\log|{\tilde v_1}(u_\tau)|=\textrm{Re }(\log {\tilde v_1}(u_\tau))$
 en un invariant complexe bien d\'efini modulo $2i\pi \Z$,
il suffira de remplacer dans
la formule (\ref{eqn:suggestive})
la diff\'erentielle exacte $\omega_{\Eis}^+$ par la forme diff\'erentielle
$\omega_{\Eis}.$

Pour tout $m\ge 0$, on
d\'esigne par $C_m(\cX)$ le groupe  engendr\'e par les combinaisons lin\'eaires
formelles \`a coefficients dans $\Z$ des cha\^ines diff\'erentiables de dimension
r\'eelle $m$
sur $\cX$, et l'on d\'esigne par $C_m^0(\cX)$
et $C_m^{00}(\cX)$  les sous-groupes engendr\'es par
les cycles diff\'erentiables ferm\'es et homologues \`a z\'ero
respectivement:
\begin{eqnarray*}
C_m^0(\cX) &:=& \{\Delta\in C_m(\cX) \mbox{ tel que }
 \partial \Delta  = 0\}.\\
C_m^{00}(\cX) &:=& \{\Delta\in C_m(\cX) \mbox{ tel qu'il existe }
C\in C_{m+1}(\cX) \mbox{ avec } \partial C = \Delta\}.
\end{eqnarray*}
On pose  aussi
$$
{\tilde C}_m^{00}(\cX)
:=  \{\Delta\in C_m(\cX) \mbox{ tel qu'il existe }
C\in C_{m+1}(\cX)\otimes\Q \mbox{ avec } \partial C = \Delta\}.
$$

On sait que $C_{n-1}^0(\cX)/C_{n-1}^{00}(\cX) = H_{n-1}(\cX,\Z)$
 est un groupe de type  fini, dont le sous-groupe de torsion s'identifie
avec ${\tilde C}_{n-1}^{00}(\cX)/C_{n-1}^{00}(\cX)$.
Soit $n_F$  l'exposant  de ce groupe fini, et soit
$$ {\Lambda'}_{\Eis}:= \frac{1}{n_F}\Lambda_{\Eis}.$$
 On peut d\'efinir \`a partir de la forme
diff\'erentielle
$\omega_{\Eis}$  une ``application d'Abel-Jacobi"
$$ \Phi_{\Eis}: C_{n-1}^{00}(\cX)\lra \C/\Lambda_{\Eis}$$
en posant
$$
\Phi_{\Eis}(\Delta) = \int_{\partial C = \Delta} \!\!\! \omega_{\Eis} \pmod{\Lambda_{\Eis}},$$
l'int\'egrale \'etant prise sur n'importe quelle $n$-cha\^ine diff\'erentiable $C$
sur $\cX$  tel que $\partial C=\Delta$.
Cette application $\Phi_{\Eis}$ est bien d\'efinie
modulo le r\'eseau des p\'eriodes $\Lambda_{\Eis}$ en vertu
de la Proposition \ref{propositionreseauperiode}.
Quitte \`a remplacer le r\'eseau $\Lambda_{\Eis}$ par
$\Lambda_{\Eis}'$, on peut \'etendre $\Phi_{\Eis}$ au groupe ${\tilde C}_{n-1}^{00}(\cX)$
tout entier, en posant
\begin{equation}\label{def:PhiEisprolonge}
\Phi_{\Eis}(\Delta) = \frac{1}{n_F} \int_{\partial C = n_F\Delta} \!\!\! \omega_{\Eis}
\pmod{\Lambda'_{\Eis}},\end{equation}
l'int\'egrale \'etant prise sur n'importe quelle $n$-cha\^ ine diff\'erentiable $C$
sur $\cX$  tel que $\partial C= n_F\Delta$.
On  pose ensuite
$$ J_\tau:= e_I(-2)^{n-1} \Phi_{\Eis}(\Delta_\tau) \in \C/\Lambda_{\Eis}'.$$

Soit $\Lambda_{\Eis}''$ le r\'eseau de $(2i\pi)^n\R$ engendr\'e
par $\Lambda_{\Eis}'$ et $(2i\pi)^n\Z$.
On fixe une place $\tilde v_1$ de $H_I$ au-dessus de la place $v_1$ de $K$.
Nous sommes maintenant en mesure d'\'enoncer la conjecture principale de cet
article.
\begin{conjecture}
\label{conj:principale}
Pour tout $\cO_I$-module $M$ associ\'e \`a $\tau\in K$,  il
existe une unit\'e $u_\tau\in \cO_{H_I}^\times$
telle que
$$J_\tau = 2\delta_I (2i\pi)^{n-1} \log(\tilde v_1(u_{\tau}))
 \pmod{\Lambda_{\Eis}''}.$$
De plus, pour
 tout $2\le j \le n$, l'image de $u_\tau$ par
n'importe quel plongement complexe au-dessus de $v_j$
est de module $1$.
 Pour tout $\alpha\in G_I$, on a $u_{\tau^\alpha} = \rec(\alpha)^{-1} u_\tau$,
o\`u $\tau^\alpha$ d\'esigne l'invariant associ\'e au module $M^\alpha$.
\end{conjecture}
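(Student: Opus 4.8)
The plan is to separate Conjecture \ref{conj:principale} into three assertions of decreasing tractability. First, the Galois-equivariance $u_{\tau^\alpha}=\rec(\alpha)^{-1}u_\tau$, which should follow formally. Second, the archimedean data: that $\real(J_\tau)$ reproduces the Stark term at $v_1$ and that the conjugates of $u_\tau$ above $v_2,\ldots,v_n$ have absolute value $1$; this should come from Theorem \ref{thm:atrmain}, Conjecture \ref{conj:stark}, and the Hecke eigenproperties of $\omega_{\Eis}$. Third, the algebraicity itself --- the very existence of a unit $u_\tau\in\cO_{H_I}^\times$ realising $J_\tau$ --- which is the genuine obstacle and amounts, in this generality, to a new case of Hilbert's twelfth problem.

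\emph{Galois-equivariance.} The bijection (\ref{eqn:giproj}) makes $G_I$ act simply transitively on the projective modules, and multiplying $M$ by an id\`ele $\alpha$ turns $\tau$ into $\tau^\alpha$ and hence the cycle $\Delta_\tau$ into $\Delta_{\tau^\alpha}$. Because $\Phi_{\Eis}$ is manufactured from the $\Gamma$-invariant, Hecke-eigen form $\omega_{\Eis}$, the family $(J_\tau)_\tau$ is a torsor under $G_I\cong\gal(H_I/K)$ parallel to the conjectural family $(\log\tilde v_1(u_\tau))_\tau$; concretely I would show that $J_{\tau^\alpha}-J_\tau$ is the $\Phi_{\Eis}$-period of a cycle representing $\rec(\alpha)$, matching $\log\tilde v_1(\rec(\alpha)^{-1}u_\tau)-\log\tilde v_1(u_\tau)$ modulo $\Lambda''_{\Eis}$. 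This is exactly the mechanism of \cite{darmon-logan} and \cite{darmon-dasgupta}, and I expect no surprises here.

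\emph{Absolute values.} With $\omega_{\Eis}^{+}=\tfrac12(\Id+T_{v_1}^*)\omega_{\Eis}$, the real part for $v_1$ of $J_\tau$ equals $e_I(-2)^{n-1}\int_{C_\tau}\omega_{\Eis}^{+}$, which by Theorem \ref{thm:atrmain} and formula (\ref{eqn:suggestive}) is $2\delta_I(2i\pi)^{n-1}\log|\tilde v_1(u_\tau)|$ as soon as Conjecture \ref{conj:stark} is granted for the partial Hurwitz $L$-functions assembled in (\ref{eqn:LlatticeF}); so the real part of the desired identity is unconditional modulo Stark. For $v_2,\ldots,v_n$ one would exploit the relation $T_{v_j}^*[\omega_{\Eis}]=-[\omega_{\Eis}]$ (the characterisation of $H^n_{\Eis}$ in the proof of Proposition \ref{propositionreseauperiode}) together with an analysis of how the involution $T_{v_j}$ moves the cycle $\Delta_\tau$, to force $u_\tau$ to have absolute value $1$ at every place over $v_j$, in parallel with properties 2--3 of Conjecture \ref{conj:stark}.

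\emph{The essential obstacle.} There is at present no analogue of complex multiplication that produces algebraic numbers out of periods of Hilbert Eisenstein series over a base field with $n>1$ --- precisely because, as the introduction recalls, $\cX$ carries no nonzero global modular units when $n>1$. I therefore do not expect a full proof; instead I would (a) use the Galois-equivariance above to reduce the conjecture to the single algebraicity claim that $\exp\!\big(J_{\tau_0}/(2\delta_I(2i\pi)^{n-1})\big)$ lies in $\cO_{H_I}^\times$ with the predicted characteristic polynomial over $K$, for one base point $\tau_0$; and (b) support this by high-precision numerical evaluation of the period $J_{\tau_0}$ followed by algebraic recognition, as done for complex cubic $K$ in \cite{dtv} and in \cite{charollois}, and by checking consistency with the $p$-adic analogue of \cite{darmon-dasgupta} and with the classical $F=\Q$ picture of (\ref{eqn:logeu}). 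A genuine proof would need a new ingredient --- an Eisenstein-cohomology incarnation of the reciprocity law of class field theory --- which I do not know how to supply; that is where I expect all of the difficulty to reside.
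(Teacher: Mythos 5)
This statement is the paper's central \emph{conjecture}, not a theorem: the paper offers no proof of it, only the heuristic derivation of (\ref{eqn:suggestive}) by combining Theorem \ref{thm:atrmain} with Stark's Conjecture \ref{conj:stark}, the passage from $\omega_{\Eis}^{+}$ to $\omega_{\Eis}$ to lift $\log|\tilde v_1(u_\tau)|$ to a complex invariant, and the numerical verifications of Section 6 via the algorithm of Section 5. Your assessment --- real part conditional on Stark through Theorem \ref{thm:atrmain} and (\ref{eqn:LlatticeF}), algebraicity genuinely open for lack of any CM-type or modular-unit mechanism when $n>1$, support by high-precision computation and algebraic recognition of the unit --- is essentially the paper's own position, so your proposal matches its treatment; the only over-claim is that the Galois-equivariance ``should follow formally'': in the paper this too is part of the conjecture (it presupposes the conjectural unit $u_\tau$, and no reciprocity law for the periods $J_\tau$ themselves is proved), which is consistent with the missing ingredient you yourself identify at the end.
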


\section{Algorithmes}
\label{sec:algorithmes}
L'invariant $J_\tau$ et
l'application d'Abel-Jacobi $\Phi_{\Eis}$
ont l'inconv\'enient de  ne pas
\^etre faciles \`a calculer num\'eriquement a priori.
Le but de la pr\'esente section est de d\'ecrire un algorithme pour
le calcul de $\Phi_{\Eis}$ dans la cas le plus simple o\`u
$n=2$.

La premi\`ere \'etape consiste \`a   d\'ecrire
la classe de cohomologie de $\omega_{\Eis}$ en terme
de cohomologie du groupe $\Gamma$.

On rappelle
 le dictionnaire bien connu entre la cohomologie de deRham de
$\cX$ et la cohomologie de $\Gamma$.
Si $P,Q,R$ sont des points de $\cH_1\times\cH_2 = \cH^2$, on appelle
$\Delta(P,Q,R)$
n'importe quelle
$2$-cha\^ine  diff\'erentiable
dont  la fronti\`ere est \'egale au
 triangle g\'eod\'esique de sommets
$P$, $Q$ et $R$.
On munit cette
r\'egion  de l'orientation standard,
selon les d\'efinitions usuelles de l'homologie singuli\`ere.
  On pose aussi, pour $P =(z_1,z_2)\in\cH^2$ et
$A,B\in \Gamma$,
$$ \Delta_P(A,B):= \Delta(P,A P,AB P).$$
On associe \`a $\omega_{\Eis}$
(plus pr\'ecis\'ement: \`a sa classe de cohomologie) un $2$-cocycle
$$ \kappa_P\in \mc Z^2(\Gamma,\C)$$
par la r\`egle
$$ \kappa_P(A,B):= \int_{\Delta_P(A,B)} \omega_{\Eis}.$$
Un calcul direct montre que $\kappa_P$ satisfait la relation de
$2$-cocycle: $d\kappa_P = 0$, et que son image  dans
$H^2(\Gamma,\C)$ ne d\'epend pas du choix du point base $P$.

On rappelle le r\'eseau $\Lambda_{\Eis}\subset \C$ des p\'eriodes
de $\omega_{\Eis}$ et
l'on note $\bar\kappa_P$ l'image de $\kappa_P$ dans
$\mc Z^2(\Gamma,\C/\Lambda_{\Eis}')$.

\begin{lemma}
\label{lemma:rhop}
La classe de $\bar\kappa_P$ dans
 $H^2(\Gamma,\C/\Lambda_{\Eis}')$ est nulle.
\end{lemma}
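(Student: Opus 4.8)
Le plan est de montrer que $\bar\kappa_P$ est un cobord, en exhibant une $1$-cocha\^ine explicite $\rho_P\colon\Gamma\to\C/\Lambda'_{\Eis}$ telle que $d\rho_P=\bar\kappa_P$. L'observation de d\'epart est que les c\^ot\'es g\'eod\'esiques servant \`a d\'efinir les triangles $\Delta_P(A,B)$, une fois projet\'es sur $\cX$, deviennent des lacets ferm\'es, qui sont de torsion en homologie (puisque $n=2$ est pair), et auxquels on peut donc appliquer l'application d'Abel--Jacobi \'etendue $\Phi_{\Eis}$ de (\ref{def:PhiEisprolonge}).

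Pour $A\in\Gamma$, notons $\gamma_A$ le segment g\'eod\'esique orient\'e allant de $P$ \`a $AP$ dans $\cH^2$, et $\bar\gamma_A$ son image dans $\cX$. Comme $A\in\Gamma$, on a $\overline{AP}=\bar P$ dans $\cX$, donc $\bar\gamma_A$ est un $1$-cycle ferm\'e, \'el\'ement de $C_1^0(\cX)$. Puisque $n=2$ est pair, $H_1(\cX,\Q)=0$ (\cite{freitag}, Ch.~III, comme dans la preuve du Lemme \ref{lemmaDeltatrivial}); le groupe $H_1(\cX,\Z)$ est donc fini et d'exposant divisant $n_F$, de sorte que $n_F\bar\gamma_A$ borde une $2$-cha\^ine enti\`ere. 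Ainsi $\bar\gamma_A\in\tilde C_1^{00}(\cX)$, et $\rho_P(A):=\Phi_{\Eis}(\bar\gamma_A)\in\C/\Lambda'_{\Eis}$ est bien d\'efini.

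Le point cl\'e est l'identit\'e de bord. Comme $A$ agit par isom\'etries sur $\cH^2$, le segment g\'eod\'esique de $AP$ \`a $ABP$ est $A\gamma_B$, et celui de $ABP$ \`a $P$ est $-\gamma_{AB}$; la projection $\cH^2\to\cX$ \'etant $\Gamma$-invariante, les images de ces segments dans $\cX$ sont $\bar\gamma_B$ et $-\bar\gamma_{AB}$. Par cons\'equent l'image dans $\cX$ de $\partial\Delta_P(A,B)$ vaut $\bar\gamma_A+\bar\gamma_B-\bar\gamma_{AB}$, et ce $1$-cycle borde l'image dans $\cX$ de la $2$-cha\^ine enti\`ere $\Delta_P(A,B)$. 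En invoquant l'additivit\'e de $\Phi_{\Eis}$ sur $\tilde C_1^{00}(\cX)$, on obtient
$$(d\rho_P)(A,B)=\Phi_{\Eis}(\bar\gamma_A)+\Phi_{\Eis}(\bar\gamma_B)-\Phi_{\Eis}(\bar\gamma_{AB})=\Phi_{\Eis}(\bar\gamma_A+\bar\gamma_B-\bar\gamma_{AB})=\int_{\Delta_P(A,B)}\omega_{\Eis}\pmod{\Lambda'_{\Eis}},$$
c'est-\`a-dire $(d\rho_P)(A,B)=\bar\kappa_P(A,B)$, ce qui \'etablit le lemme.

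Le seul ingr\'edient non formel est l'annulation $H_1(\cX,\Q)=0$ (valable parce que $n=2$ est pair), qui force les lacets $\bar\gamma_A$ \`a \^etre de torsion: c'est elle qui rend $\Phi_{\Eis}(\bar\gamma_A)$ bien d\'efini apr\`es inversion de $n_F$, et c'est pourquoi l'\'enonc\'e porte sur $\Lambda'_{\Eis}$ et non sur $\Lambda_{\Eis}$. Le reste --- descente des triangles g\'eod\'esiques sur $\cX$, additivit\'e de l'int\'egrale, formule du cobord pour l'action triviale de $\Gamma$ --- est routinier, et seules les questions de signes et d'orientations demandent un soin particulier.
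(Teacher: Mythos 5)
Votre démonstration est correcte et suit essentiellement la même démarche que celle de l'article : on pose $\rho_P(A)$ égal à l'image par l'application d'Abel--Jacobi (étendue grâce à $H_1(\cX,\Q)=0$ et à l'exposant $n_F$) du lacet géodésique fermé image de $P\to AP$ dans $\cX$, puis on vérifie $d\rho_P=\bar\kappa_P$ via la décomposition du bord du triangle $\Delta_P(A,B)$ en $\bar\gamma_A+\bar\gamma_B-\bar\gamma_{AB}$. Vous explicitez simplement le « calcul direct » que l'article laisse au lecteur; rien à redire.
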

\begin{proof}
Pour tout $A\in\Gamma$, on appelle $S_P(A)$  l'image dans $\cX$ du chemin
g\'eod\'esique  sur
$\cH^2$
allant de $P$ \`a $AP$.
Comme $S_P(A)$ est un $1$-cycle ferm\'e sur $\cX$ et que $H_1(\cX,\Q)=0$,
il existe une $2$-cha\^ine diff\'erentiable sur $\cX$ \`a coefficients
entiers, que l'on appellera
$D_P(A)$, telle que
\begin{equation}\label{defDA} \partial D_P(A) = n_F S_P(A).\end{equation}
La r\'egion $D_P(A)$ est d\'etermin\'ee par cette \'equation
 modulo les $2$-cycles ferm\'es, et par con\-s\'e\-quent
l'\'el\'ement de $\C/\Lambda_{\Eis}'$ d\'efini par
\begin{equation}
\label{eqn:rhotheoretical}
 \rho_P(A):= \frac{1}{n_F}\int_{D_P(A)} \omega_{\Eis} \pmod{\Lambda_{\Eis}'}
\end{equation}
ne d\'epend pas du choix de $D_P(A)$ satisfaisant (\ref{defDA}).
On v\'erifie ensuite par un calcul direct que
$$d\rho_P(A,B) = \kappa_P(A,B) \pmod{\Lambda_{\Eis}'}.$$
\end{proof}

Le Lemme \ref{lemma:rhop} permet de d\'efinir une $1$-cha\^ine $\rho_P$
en choisissant une solution de l'\'equation
\begin{equation}
\label{eqn:rhop}
d\rho_P = \kappa_P \pmod{\Lambda_{\Eis}'}.
\end{equation}

La proposition suivante permet de calculer l'invariant
num\'erique  $\Phi_{\Eis}(\Delta_\tau)$
en terme de cohomologie
des groupes---du moins, en admettant
que l'on sache r\'esoudre l'\'equation
(\ref{eqn:rhop}).

Soit $K$ un corps ATR et soit
  $\tau\in K$ un \'el\'ement provenant
 d'une base positive d'un r\'eseau $M\subset K$.
Parce que $n=2$, le groupe $\Gamma_\tau$ est de rang un modulo la torsion.
On se donne un g\'en\'erateur $\gamma_\tau$ de $\Gamma_\tau $ modulo torsion,
choisi  de sorte  que  pour tout point $z_2$ de la g\'eod\'esique $\Upsilon_2$, le
chemin allant de $z_2$ \`a $\gamma_\tau z_2$ soit orient\'e dans le sens positif.
On choisit le point base $P\in\cH_1\times\cH_2$
de mani\`ere \`a ce que sa premi\`ere composante
 soit \'egale \`a $\tau_1=v_1(\tau)$.
Avec ces choix, on a alors
\begin{proposition}
$$ \Phi_{\Eis}(\Delta_\tau) = \rho_P(\gamma_\tau).$$
\end{proposition}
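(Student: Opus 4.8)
The plan is to produce an explicit $n$-chain which realises $n_F\Delta_\tau$ as a boundary and which is literally one of the chains $D_P(A)$ occurring in the proof of Lemma~\ref{lemma:rhop}; the asserted equality then becomes essentially a tautology. Throughout I take $\rho_P$ to be the cochain exhibited in that proof, namely $\rho_P(A)=\frac{1}{n_F}\int_{D_P(A)}\omega_{\Eis}\pmod{\Lambda_{\Eis}'}$ with $\partial D_P(A)=n_F S_P(A)$; this is the solution of (\ref{eqn:rhop}) that enters the algorithm (any other solution differing from it by an element of $\Hom(\Gamma,\C/\Lambda_{\Eis}')$).

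The first step is to understand how $\gamma_\tau$ moves the base point. Write $P=(\tau_1,z_2)$ with $z_2$ an arbitrary point of $\Upsilon_2$. Since $\gamma_\tau$ fixes $\tau\in K$, the matrix $v_1(\gamma_\tau)\in\SL_2(\R)$ fixes $\tau_1=v_1(\tau)\in\cH_1$, while $v_2(\gamma_\tau)$ fixes the two distinct real points $\tau_2,\tau_2'$, hence preserves the geodesic $\Upsilon_2$ and translates along it. Therefore $\gamma_\tau P=(\tau_1,v_2(\gamma_\tau)z_2)$, the two endpoints share the first coordinate $\tau_1$, and the geodesic path of $\cH^2$ from $P$ to $\gamma_\tau P$ — having constant first coordinate $\tau_1$ — is exactly $\{\tau_1\}\times[z_2,v_2(\gamma_\tau)z_2]$, a connected sub-arc of $R_\tau=\{\tau_1\}\times\Upsilon_2$.

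The second step is to identify the image of this arc in $\cX$ with $\Delta_\tau$. A torsion element of $\Gamma_\tau$ has $v_2$-image a finite-order element of $\SL_2(\R)$ fixing the two boundary points $\tau_2,\tau_2'$, hence equal to $\pm\Id$, so it acts trivially on $\Upsilon_2$; consequently $\Gamma_\tau$ acts on $R_\tau$ through $\Gamma_\tau$ modulo its torsion subgroup, which is generated by $\gamma_\tau$ acting as a translation of $\Upsilon_2$. Thus $\{\tau_1\}\times[z_2,v_2(\gamma_\tau)z_2]$ is a fundamental domain for this action, and a direct chain-level computation — writing the arcs over $[a,a+1]$ and $[b,b+1]$ in a parameter identifying $\gamma_\tau$ with $t\mapsto t+1$, and using that $\gamma_\tau\in\Gamma$ — shows that its image in $\cX$, as a $1$-chain, is independent of the choice of such fundamental domain; hence this image equals $\Delta_\tau$. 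The normalisation imposed on $\gamma_\tau$, that $z_2\mapsto\gamma_\tau z_2$ runs in the positive direction of $\Upsilon_2$, guarantees that the orientation induced by $P\to\gamma_\tau P$ agrees with the one carried by $\Delta_\tau$. In short, $S_P(\gamma_\tau)=\Delta_\tau$ as oriented $1$-cycles on $\cX$.

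The conclusion is then formal. By the previous step $\partial D_P(\gamma_\tau)=n_F S_P(\gamma_\tau)=n_F\Delta_\tau$, so $D_P(\gamma_\tau)$ is an admissible choice for the $n$-chain $C$ in the definition (\ref{def:PhiEisprolonge}) of $\Phi_{\Eis}(\Delta_\tau)$, which is legitimate because $\Delta_\tau\in{\tilde C}_{n-1}^{00}(\cX)$ by Lemma~\ref{lemmaDeltatrivial}. Since $\Phi_{\Eis}(\Delta_\tau)$ does not depend on the chosen chain, we obtain
$$\Phi_{\Eis}(\Delta_\tau)=\frac{1}{n_F}\int_{D_P(\gamma_\tau)}\omega_{\Eis}\pmod{\Lambda_{\Eis}'}=\rho_P(\gamma_\tau).$$
The step requiring the most care is the geometric identification $S_P(\gamma_\tau)=\Delta_\tau$ carried out above — in particular the triviality of the $\Gamma_\tau$-torsion on $R_\tau$ and the bookkeeping of orientations — while the remaining manipulations are purely formal once $\rho_P$ has been pinned down to the canonical representative furnished by the proof of Lemma~\ref{lemma:rhop}.
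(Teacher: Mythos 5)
Your proof is correct and follows essentially the same route as the paper, which simply asserts that the identity follows from the formula (\ref{eqn:rhotheoretical}) for $\rho_P$; you have merely made explicit the implicit geometric content, namely that $S_P(\gamma_\tau)=\Delta_\tau$ once the second coordinate of $P$ is taken on $\Upsilon_2$ (a harmless and indeed natural sharpening of the paper's choice of base point), together with the triviality of the torsion action and the orientation bookkeeping. Nothing further is needed.
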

\begin{proof}
Cela r\'esulte directement de la formule pour
$\rho_P$ de l'\'equation (\ref{eqn:rhotheoretical}).
\end{proof}

La d\'efinition du $2$-cocycle $\kappa_P$ exige d'int\'egrer
$\omega_{\Eis}$ sur des r\'egions de type  $\Delta_P(A,B)$ peu commodes \`a param\'etrer.
Dans les calculs num\'eriques, il est donc utile  de  remplacer ce cocycle par un
 repr\'esentant de la m\^eme classe de cohomologie  qui ne fait intervenir
que des r\'egions  ``rectangulaires"
de  la forme $L_1\times L_2\subset\cH_1\times \cH_2$ (avec $L_1$ et $L_2$
de dimension $1$, bien entendu). Les int\'egrales de $\omega_{\Eis}$ sur de telles r\'egions
s'expriment au moyen d'int\'egrales it\'er\'ees,
et sont  donc plus faciles \`a
calculer num\'eriquement. (On se sert pour cela
 du d\'eveloppement de  Fourier de $\omega_{\Eis}$.)

Si $u,v$ appartiennent \`a $\cH$,
soit $\Upsilon[u,v] \subset \cH$ le segment g\'eod\'esique joignant
le point $u$ au point $v$.  On pose
$$ \Box_P(A,B) = \Upsilon[z_1,A_1 z_1] \times \Upsilon[A_2 z_2,A_2B_2 z_2],$$
et on d\'efinit un nouveau cocycle
$\kappa^\Box_P\in \mc Z^2(\Gamma, \C)$  par la r\`egle
$$\kappa^\Box_P(A, B)=\int_{\Box_P(A,B)}\!\!\omega_{\Eis}.$$
Il  est n\'ecessaire de modifier l\'eg\`erement  $\kappa_P^\Box$ pour qu'il repr\'esente  la m\^eme
classe de cohomologie que $\kappa_P.$
On dispose pour cela  d'un
 $2$-cocycle classique sur $\SL_2(\R)$  appelé \textit{cocycle d'aire}, dont on rappelle la d\'efinition : \'etant donn\'ees deux matrices  $M=\left(\begin{array}{cc} * &*\\
c& d
\end{array}\right)$ et $N=\left(\begin{array}{cc} * &*\\ c'& d'
\end{array}\right)$ de $\SL_2(\R)$ et  en notant   $MN=\left(\begin{array}{cc} * &*\\ c''&
d''
\end{array}\right)$ leur produit, la formule
\begin{equation}\label{definitioncocycleaire}
\textrm{ aire}(M,\,N):=-\textrm{signe}(c\,c'\, c'')\end{equation}
(où $\textrm{signe}(x)=x/|x|$ si $x\neq 0,$ et  $0$ sinon) définit
un  $2$-cocycle  sur $\SL_2(\R)$ à
valeurs enti\`eres. Par composition avec les  plongements de $\Gamma$ dans $\SL_2(\R),$ on en
 d\'eduit deux $2$-cocycles  sur $\Gamma$ \`a valeurs dans $\Z.$
On d\'efinit finalement le $2$-cocycle
$\tilde\kappa_P$ sur $\Gamma$  par la formule
\begin{align*}\tilde\kappa_P(A,B):&= \kappa_P^\Box(A,B)   - i\pi R_F\, \textrm{aire}(A_1,B_1)+i \pi R_F\,\textrm{aire}(A_2,B_2)  \\  &\int_{z_1}^{A_1 z_1}\!\!\!\int_{A_2
z_2}^{A_2B_2 z_2} \!\!\!\!\omega_{\Eis}\  -i \pi R_F \,\textrm{aire}(A_1,B_1)+ i\pi R_F\, \textrm{aire}(A_2,B_2).\end{align*}
\begin{proposition}
\label{prop:calgary-mtl}
Les cocycles $\kappa_P$ et $\tilde\kappa_P$ repr\'esentent la m\^eme classe
de cohomologie dans $H^2(\Gamma, \C).$  Plus pr\'ecis\'ement, on a
$$ \kappa_P(A,B) - \tilde\kappa_P(A,B) = d\xi_P(A,B),$$
o\`u
$$ \xi_P(A) = -\int_{\Delta_P(A)} \omega_{\Eis}, \quad
\mbox{ avec } \  \Delta_P(A) = \Delta((z_1,z_2),(z_1,A_2z_2), (A_1z_1,A_2z_2)).$$
\end{proposition}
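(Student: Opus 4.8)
The plan is to realize both $\kappa_P(A,B)$ and $\tilde\kappa_P(A,B)$ as integrals of the closed form $\omega_{\Eis}$ over explicit $2$-chains, together with the two integer terms defining $\tilde\kappa_P$, and then to exhibit $\xi_P$ as the boundary defect between the two families of integration domains. The governing principle is that, since $\mc H_1\times\mc H_2$ is contractible and $\omega_{\Eis}$ is closed, the integral of $\omega_{\Eis}$ over a $2$-chain in $\mc H_1\times\mc H_2$ depends only on its boundary. First I would fix notation for the vertices that occur: write $P=(z_1,z_2)$, $Q=AP=(A_1z_1,A_2z_2)$, $R=ABP$, and introduce the three "mixed" points $c_1=(z_1,A_2z_2)$, $c_2=(z_1,A_2B_2z_2)$, $c_3=(A_1z_1,A_2B_2z_2)$. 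One checks that $Q$ is the remaining corner of $\Box_P(A,B)$, that $\Delta_P(A)=\Delta(P,c_1,Q)$ and $\Delta_P(AB)=\Delta(P,c_2,R)$, and that $A\cdot\Delta_P(B)=\Delta(Q,c_3,R)$. Using the $\Gamma$-invariance of $\omega_{\Eis}$ to replace $\int_{A\Delta_P(B)}\omega_{\Eis}$ by $\int_{\Delta_P(B)}\omega_{\Eis}$, and the (trivial-coefficient) coboundary $d\xi_P(A,B)=\xi_P(B)-\xi_P(AB)+\xi_P(A)$, one obtains
$$ \kappa_P(A,B)-\kappa_P^\Box(A,B)-d\xi_P(A,B)=\int_D\omega_{\Eis},\qquad D:=\Delta(P,Q,R)-\Box_P(A,B)+\Delta(Q,c_3,R)-\Delta(P,c_2,R)+\Delta(P,c_1,Q). $$
Then I would compute $\partial D$: expanding each $\partial\Delta(\cdot,\cdot,\cdot)$ as a sum of three oriented geodesic edges and using $\partial(\alpha\times\beta)=(\partial\alpha)\times\beta-\alpha\times(\partial\beta)$ for $\Box_P(A,B)$, all "diagonal" edges $[P,Q],[Q,R],[R,P],[Q,c_3],[c_1,Q]$ cancel in pairs and one is left only with edges confined to two coordinate slices, namely $\partial D=\partial\Delta(P,c_1,c_2)+\partial\Delta(c_2,c_3,R)$. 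Hence $D-\Delta(P,c_1,c_2)-\Delta(c_2,c_3,R)$ is a $2$-cycle in $\mc H_1\times\mc H_2$, so it bounds, and by closedness of $\omega_{\Eis}$ the displayed quantity equals $\int_{\Delta(P,c_1,c_2)}\omega_{\Eis}+\int_{\Delta(c_2,c_3,R)}\omega_{\Eis}$.

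The heart of the argument is the evaluation of these two residual integrals, and this is where the correction term of $\omega_{\Eis}$ for $n=2$ is used. The triangle $\Delta(P,c_1,c_2)$ has all three vertices of the form $(z_1,\ast)$, so it may be taken inside the slice $\{z_1\}\times\mc H_2$, on which $\omega_{E_2}$ and $dz_1\wedge d\bar z_1/y_1^2$ restrict to zero and only the summand $-\tfrac{R_F}{2}\,dz_2\wedge d\bar z_2/y_2^2$ of $\omega_{\Eis}$ survives; symmetrically $\Delta(c_2,c_3,R)$ lies in $\mc H_1\times\{A_2B_2z_2\}$ and contributes only through $+\tfrac{R_F}{2}\,dz_1\wedge d\bar z_1/y_1^2$. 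Writing $\dfrac{dz\wedge d\bar z}{y^2}=-2i\,d\!\left(\dfrac{dx}{y}\right)$ and applying Stokes' theorem turns each residual integral into an integral of the primitive $dx/y$ around the geodesic boundary triangle with vertices $z_j,A_jz_j,A_jB_jz_j$ in $\mc H_j$. One then invokes the transformation law $g^{*}\!\left(\dfrac{dx}{y}\right)=\dfrac{dx}{y}+2\,d\arg(cz+d)$ for $g=\mat{a}{b}{c}{d}$ — equivalently the formula (\ref{eqn:transhtilde}) for the Asai function $\tilde h$, whose imaginary part supplies $\arg(c_jz_j+d_j)$ — to rewrite these loop integrals in terms of the jumps of $\arg(c_jz_j+d_j)$, and summing those jumps is precisely the mechanism that produces the integer cocycle $\textrm{aire}$ on $\SL_2(\R)$. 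One is led to $\int_{\Delta(P,c_1,c_2)}\omega_{\Eis}=i\pi R_F\,\textrm{aire}(A_2,B_2)$ and $\int_{\Delta(c_2,c_3,R)}\omega_{\Eis}=-i\pi R_F\,\textrm{aire}(A_1,B_1)$; since these sum to $\tilde\kappa_P(A,B)-\kappa_P^\Box(A,B)$, the previous identity becomes $\kappa_P(A,B)-\tilde\kappa_P(A,B)=d\xi_P(A,B)$, as asserted (in particular $\tilde\kappa_P$ is a genuine $2$-cocycle cohomologous to $\kappa_P$).

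The hard part will be exactly this last identification of the slice integrals with $\textrm{aire}$: one must choose mutually compatible branches of $\arg(c_jz_j+d_j)$ along the three geodesic edges and analyze carefully how the resulting quantity jumps by $\pm\pi$ as $A,B$ cross the loci where $c_{A_j}$, $c_{B_j}$ or $c_{(AB)_j}$ vanishes, i.e. the classical comparison between the hyperbolic-area cocycle and the sign cocycle $\textrm{aire}$; this is what dictates the extra term $\tfrac{R_F}{2}\bigl(dz_1\wedge d\bar z_1/y_1^2-dz_2\wedge d\bar z_2/y_2^2\bigr)$ in (\ref{defomegaEis}) when $n=2$. By contrast, the boundary computation $\partial D=\partial\Delta(P,c_1,c_2)+\partial\Delta(c_2,c_3,R)$ in the first step is elementary, but it has to be carried out with attention to the orientation conventions of singular chains.
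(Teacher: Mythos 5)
Your proposal follows, almost step for step, the paper's own argument: your chain $D$ and the boundary identity $\partial D=\partial\Delta(P,c_1,c_2)+\partial\Delta(c_2,c_3,R)$ are exactly the relation (\ref{formuleavionCalgary}) of the paper (your two residual triangles are its $\Delta_2$ and $\Delta_1$, and your $\Delta(Q,c_3,R)=A\cdot\Delta_P(B)$ is its $\Delta_3$), and the observation that on the two slices only the terms $\pm\frac{R_F}{2}\,dz_j\wedge d\bar z_j/y_j^2$ of $\omega_{\Eis}$ survive is also the same. So the architecture is sound and identical to the paper's; the only divergence is in how the two residual slice integrals are evaluated, and that is precisely where your write-up stops short.

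That last step — which you yourself call the hard part — is asserted rather than proved, and as described it would not deliver the values you announce. Your Stokes computation with the primitive $dx_j/y_j$ (whose transformation law $g^*(dx/y)=dx/y+2\,d\arg(cz+d)$ is correct) evaluates each slice integral as a fixed multiple of the signed hyperbolic area of the geodesic triangle with the \emph{finite} vertices $z_j$, $A_jz_j$, $A_jB_jz_j$. That area depends on the base point and, by Gauss--Bonnet, is never equal to $\pm\pi$; for instance if $A=B$ is a translation $T_\theta$ then $\textrm{aire}(A_j,B_j)=0$ while the triangle $(z_j,\,z_j+\theta_j,\,z_j+2\theta_j)$ has nonzero area. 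The quantity $\pi\,\textrm{aire}(A_j,B_j)$ is instead the signed area of the \emph{ideal} triangle with vertices $\infty$, $A_j\infty$, $A_jB_j\infty$, which is how the paper concludes (it passes to that ideal triangle and invokes \cite{K-M}, formule 1.2). The finite-vertex and ideal-vertex area cocycles on $\SL_2(\R)$ agree only up to an explicit coboundary in $(A,B)$ (of $A\mapsto$ the area of a triangle with one vertex at $\infty$), so your argument as written does establish the first assertion of the proposition — that $\kappa_P$ and $\tilde\kappa_P$ are cohomologous — but to reach the precise identity $\kappa_P-\tilde\kappa_P=d\xi_P$ with the stated $\xi_P$ you must supply exactly this missing comparison between the triangle $(z_j,A_jz_j,A_jB_jz_j)$ and the ideal triangle $(\infty,A_j\infty,A_jB_j\infty)$; the branch-tracking of $\arg(c_jz_j+d_j)$ alone computes the finite area and will not land on $\pi\,\textrm{aire}$ on the nose.
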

\begin{proof}
On dit que deux $2$-cha\^ines $Z_1$ et $Z_2$ sont homologues si leurs fronti\`eres sont \'egales,
et on \'ecrit dans ce cas $Z_1\sim Z_2$.
Un calcul direct  fournit la relation
\begin{equation}\label{formuleavionCalgary}
-\Box_P(A,B)  + \Delta_P(A,B) + \Delta_P(A) -\Delta_P(AB) \sim \Delta_1+\Delta_2-\Delta_3,\end{equation}
avec  $$\left\{\begin{array}{l}\Delta_1=\Delta((A_1B_1z_1,A_2B_2 z_2),(z_1,A_2B_2z_2),(A_1 z_1,A_2B_2z_2)),\\
\Delta_2= \Delta((z_1,z_2),(z_1,A_2 z_2),(z_1,A_2B_2z_2)),\\
\Delta_3= \Delta((A_1z_1,A_2z_2), (A_1z_1,A_2B_2z_2), (A_1B_1z_1,A_2B_2z_2)).\end{array}\right.$$
Par $A$-invariance, on observe que $\Delta_3=\Delta_P(B)$ dans $\mc X.$
En outre,  l'int\'egrale de $\omega_{E_2}$ sur $\Delta_1$ et $\Delta_2$ est nulle, et par cons\'equent
$$\int_{\Delta_1+\Delta_2} \omega_{\textrm{Eis}} = \frac{R_F} 2 \int_{\Delta_1}
\frac {dz_1\wedge d\bar{z}_1}{y_1^2}-\frac{R_F} 2\int_{\Delta_2}\frac {dz_2\wedge d\bar{z}_2}{y_2^2}.$$
Ces derni\`eres int\'egrales se calculent \'el\'ementairement  :  on constate  d'abord
qu'elles ne d\'epen\-dent pas du point base $P=(z_1, z_2),$ et que $dz_j\wedge d\bar{z}_j=-2idx_j\wedge dy_j.$ Or
 l'int\'egrale
$$\int_{\Delta_j}
\frac {dx_j\wedge dy_j}{y_j^2}$$ n'est rien d'autre que l'aire, dans le disque de Poincar\'e, du triangle id\'eal orient\'e de sommets
$\infty,$ $A_j\infty$ et $A_jB_j\infty.$
D'apr\`es  \cite{K-M} formule 1.2, il en r\'esulte que
$$\int_{\Delta_1}
\frac {dz_1\wedge d\bar{z}_1}{y_1^2}=-2i\pi \,\textrm{aire}(A_1,B_1) \ \   \textrm{ et }\int_{\Delta_2}
\frac {dz_2\wedge d\bar{z}_2}{y_2^2}=-2i\pi  \,\textrm{aire}(A_2,B_2).$$
On conclut alors de (\ref{formuleavionCalgary}) que $$\kappa_P(A, B)=\kappa_P^\Box(A, B)+d\xi_P(A, B)-i\pi R_F  \, \textrm{aire}(A_1,B_1)+ i \pi R_F  \,\textrm{aire}(A_2,B_2),$$
d'o\`u la proposition.
\end{proof}

\begin{corollary}
\label{cor:calgary-mtl}
Soit $\tilde\rho_P$ une solution de l'\'equation
\begin{equation}
\label{eqn:rhoptilde}
 d\tilde\rho_P = \tilde \kappa_P\pmod{\Lambda_{\Eis}'}.
\end{equation}
Alors on a $\Phi_{\Eis}(\Delta_\tau) = \tilde\rho_P(\gamma_\tau).$
\end{corollary}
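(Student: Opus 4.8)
The plan is to transport the identity $\Phi_{\Eis}(\Delta_\tau)=\rho_P(\gamma_\tau)$ proved just above (for the cochain $\rho_P$ of Lemma~\ref{lemma:rhop} normalised by (\ref{eqn:rhotheoretical})) along the explicit cohomology between $\kappa_P$ and $\tilde\kappa_P$ supplied by Proposition~\ref{prop:calgary-mtl}. Let $\xi_P$ be the $1$-cochain of that proposition, and set
$$\tilde\rho_P:=\rho_P-\xi_P\pmod{\Lambda_{\Eis}'}.$$
Because $\kappa_P-\tilde\kappa_P=d\xi_P$ and $d\rho_P=\kappa_P$ modulo $\Lambda_{\Eis}'$, we get $d\tilde\rho_P=d\rho_P-d\xi_P=\tilde\kappa_P\pmod{\Lambda_{\Eis}'}$, so $\tilde\rho_P$ solves (\ref{eqn:rhoptilde}); a solution being unique up to $\Hom(\Gamma,\C/\Lambda_{\Eis}')$, it suffices to establish the formula for this distinguished $\tilde\rho_P$.

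Next I would evaluate at the chosen generator $\gamma_\tau$ of $\Gamma_\tau$. By the proposition giving $\Phi_{\Eis}(\Delta_\tau)=\rho_P(\gamma_\tau)$,
$$\tilde\rho_P(\gamma_\tau)=\rho_P(\gamma_\tau)-\xi_P(\gamma_\tau)=\Phi_{\Eis}(\Delta_\tau)-\xi_P(\gamma_\tau),$$
so everything comes down to $\xi_P(\gamma_\tau)\equiv 0\pmod{\Lambda_{\Eis}'}$. This is where the normalisation of the base point enters: recall $P=(\tau_1,z_2)$ with $\tau_1=v_1(\tau)$, and that $\gamma_\tau$ fixes $\tau$, hence $(\gamma_\tau)_1$ fixes $\tau_1$. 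Thus in
$$\Delta_P(\gamma_\tau)=\Delta\bigl((\tau_1,z_2),\,(\tau_1,(\gamma_\tau)_2 z_2),\,((\gamma_\tau)_1\tau_1,(\gamma_\tau)_2 z_2)\bigr)$$
the second and third vertices coincide; the underlying geodesic triangle is degenerate, its boundary $1$-chain is zero, and one may take the filling $2$-chain $\Delta_P(\gamma_\tau)$ to be the zero chain (this is allowed, the filling being determined only up to $2$-cycles, hence up to periods in $\Lambda_{\Eis}\subseteq\Lambda_{\Eis}'$). Therefore $\xi_P(\gamma_\tau)=-\int_{\Delta_P(\gamma_\tau)}\omega_{\Eis}=0$ and $\tilde\rho_P(\gamma_\tau)=\Phi_{\Eis}(\Delta_\tau)$.

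All the substance sits in Proposition~\ref{prop:calgary-mtl} and the preceding proposition; the only genuinely new point is the vanishing of $\xi_P(\gamma_\tau)$, which is precisely the reason the base point was normalised to have first coordinate $\tau_1$. I do not expect a real obstacle here: the main care required is bookkeeping — keeping the reductions modulo $\Lambda_{\Eis}'$ consistent between the $\C$-valued cochain $\xi_P$ of Proposition~\ref{prop:calgary-mtl} and the $(\C/\Lambda_{\Eis}')$-valued cochains $\rho_P,\tilde\rho_P,\kappa_P,\tilde\kappa_P$ — together with the tacit understanding (already in force in the previous proposition) that the value at $\gamma_\tau$ is insensitive to the chosen solution of (\ref{eqn:rhoptilde}).
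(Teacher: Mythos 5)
Your proof is correct and takes essentially the same route as the paper: choose $\tilde\rho_P=\rho_P-\xi_P$ via Proposition \ref{prop:calgary-mtl}, use the preceding proposition $\Phi_{\Eis}(\Delta_\tau)=\rho_P(\gamma_\tau)$, and show $\xi_P(\gamma_\tau)=0$ from the fact that $(\gamma_\tau)_1$ fixes $\tau_1$, so the triangle $\Delta_P(\gamma_\tau)$ degenerates. The paper phrases this last step by noting that $\Delta_P(\gamma_\tau)$ may be taken inside the one-dimensional set $\{\tau_1\}\times\Upsilon[z_2,\gamma_\tau z_2]$, which is the same degeneracy you exploit, and it shares your tacit convention about the choice of solution of (\ref{eqn:rhoptilde}).
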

\begin{proof}
La Proposition \ref{prop:calgary-mtl}
montre que l'on peut choisir $$\tilde\rho_P(A) = \rho_P(A) -\xi_P(A) \pmod{\Lambda_{\Eis}'},$$
o\`u la $1$-cocha\^ine $\xi_P$ est d\'efinie dans l'\'enonc\'e de cette proposition.
Comme la r\'egion $\Delta_P(\gamma_\tau)$ qui intervient dans la
formule pour $\xi_P(\gamma_\tau)$ est contenue dans
$\{\tau_1\}\times \Upsilon[z_2,\gamma_\tau z_2]$,
 on a $$ \xi_P(\gamma_\tau) = 0.$$
Le corollaire en r\'esulte.
\end{proof}

\begin{remark}
Dans le pr\'esent article le cocycle $\tilde\kappa_P$ n'intervient que dans
les algorithmes pour calculer $J_\tau$ num\'eriquement.
Signalons tout de m\^eme que
la proposition \ref{prop:calgary-mtl} et le corollaire
\ref{cor:calgary-mtl}
sont d'un int\'er\^et plus que pratique.
Dans le contexte partiellement $p$-adique  \'etudi\'e
dans \cite{darmon-hpxh} et
\cite{darmon-dasgupta} o\`u l'on  est amen\'e \`a
travailler avec des formes
modulaires sur $\cH_p\times\cH$, on ignore comment donner un sens aux r\'egions
de la forme $\Delta_P(A,B)$,  ou au cocycle $\kappa_P$.
Par contre,  on sait
 d\'efinir  ce qui
 doit jouer le r\^ole des int\'egrales
``it\'er\'ees"
de formes modulaires (cuspidales ou Eisenstein) sur des r\'egions ``rectangulaires"
de la forme $\Box_P(A,B)$. Cela
 permet de d\'efinir un avatar $p$-adique
de $\tilde\kappa_P$, et par cons\'equent des versions $p$-adiques
des invariants $J_\tau$ du pr\'esent article.
\end{remark}

Il reste finalement  \`a calculer une solution de l'\'equation
(\ref{eqn:rhop}) ou
(\ref{eqn:rhoptilde}).   Le proc\'ed\'e \'etant le m\^eme, qu'il s'agisse
de $\rho_P$ ou de $\tilde\rho_P$, on se bornera au cas de
$\rho_P$ pour all\'eger les notations.

L'algorithme que nous proposons pour
 calculer $\rho_P(\gamma)$, pour $\gamma$ n'importe quel \'el\'ement de $\Gamma$,
 se base sur l'observation suivante: lorsque
 $\gamma= hkh^{-1}k^{-1}$ est un commutateur dans $\Gamma$, la formule (\ref{eqn:rhop})
permet d'exprimer  $\rho_P(\gamma)$ directement en
fonction de   $\kappa_P$.
En effet, l'identit\'e facile  $\rho_P(\Id) = 0$ assure  que
 $$\rho_P(h)+\rho_P(h^{-1}) =\kappa_P(h,h^{-1}).$$
En reportant, on en conclut  que
\begin{eqnarray*}
\rho_P(\gamma) &=& -\kappa_P(h,kh^{-1}k^{-1})-\kappa_P(k,h^{-1}k^{-1})- \kappa_P(h^{-1},k^{-1}) \\
   & & +\kappa_P(h,h^{-1})+\kappa_P(k,k^{-1})\pmod{\Lambda'_{\Eis}}.
\end{eqnarray*}
Cette derni\`ere formule, avec $\rho_P$ et $\kappa_P$
remplac\'es par $\tilde\rho_P$ et $\tilde\kappa_P$ respectivement,
donne un acc\`es num\'erique \`a
$\tilde\rho_P(hkh^{-1}k^{-1})$ puisque les nombres complexes $\tilde \kappa_P(g,g')$
se calculent gr\^ace au  d\'eveloppement en s\'erie de Fourier de la s\'erie d'Eisenstein.

\medskip
Enfin, l'ab\'elianis\'e $\Gamma_\textrm{ab}$ de $\Gamma$ est fini
(voir [DL, Prop. 1.3]). Son ordre  divise $4N_{F/\mb Q}(\epsilon^2-1),$
o\`u $\epsilon$ d\'esigne l'unit\'e fondamentale de $F.$
Pour calculer $\rho_P(\gamma)$ pour une matrice $\gamma$ de $\Gamma,$
il suffit  donc de  d\'ecomposer $\gamma^{|\Gamma_{\textrm{ab}}|}$ en un produit de commutateurs.

\smallskip
Sous l'hypoth\`ese que $F$ est de nombre de classes $1,$ on peut proc\'eder comme suit.
L'anneau des entiers $\mc O_F$ est  euclidien en $k$-\'etapes pour la norme  selon la terminologie de Cooke [Co, Th. 1]. Par cons\'equent, le groupe modulaire de Hilbert $\Gamma$ est engendr\'e par les matrices
\'el\'ementaires de type  suivant:
l'involution $S\left(\begin{array}{cc}0&-1\\1&0\end{array}\right),$ les matrices de translation $T_\theta=\left(\begin{array}{cc}1& \theta\\0 & 1\end{array}\right),$ et les puissances de la matrice $U=\left(\begin{array}{cc}\epsilon & 0\\ 0 & \epsilon^{-1} \end{array}\right).$
Il en r\'esulte que   $\gamma^{|\Gamma_{\textrm{ab}}|}$ s'\'ecrit comme un produit de matrices \'el\'ementaires gr\^ace \`a l'algorithme d'Euclide dans $\cO_F,$ puis comme un produit de commutateurs \`a l'aide des relations

$$UT_\theta U^{-1}T_{\theta}^{-1}=T_{\theta(\epsilon^2-1)},\hspace{0.5cm} SUS^{-1}U^{-1}=U^2.$$

\medskip
\begin{remark} Dans notre contexte ``Eisenstein", on ne peut pas utiliser
tel quel l'algorithme propos\'e dans
[DL,  section 4]. En effet, les int\'egrales du type
$\int^\tau\int ^{c_1}_{c_2} \omega_f$ (avec $c_1,$ $c_2 \in \PP^1(F)$)
n'ont de sens que si $f$ est une forme modulaire de Hilbert cuspidale.
Notons cependant que cet algorithme et  le n\^otre reposent
tous  deux sur l'hypoth\`ese que $\mc O_F$ est un anneau euclidien.
\end{remark}

\section{Exemples numériques}
Dans cette partie, nous pr\'esentons quelques r\'esultats exp\'erimentaux
 obtenus gr\^ace \`a l'algorithme pr\'ec\'edent.
Il s'agit, pour quelques cas d'extensions ATR $K/F$,  de tester
nu\-m\'e\-ri\-que\-ment la Conjecture
\ref{conj:principale} de cet article  et   d'exhiber
le polyn\^ome minimal de l'unit\'e  attendue.

\medskip
\subsection{Corps de base $\Q(\sqrt{5})$}

On consid\`ere d'abord la situation o\`u $F=\Q(\sqrt{5})$ et l'on
 note $\epsilon=\frac{1+\sqrt{5}}2$ son unit\'e fondamentale de norme $-1.$
L'anneau des entiers $\mc O_F=\Z[\epsilon]$ est euclidien pour la norme.
On fixe les places archim\'ediennes $v_1$ et $v_2$ de $F$ de  sorte    que
 $\epsilon_1<0$ et $\epsilon_2>0.$ On supposera dans cette section que $\Lambda''_{\rm Eis}=\Lambda'_{\rm Eis},$ et l'on se donne un entier   $m_F>0$  tel que $\Lambda'_{\Eis}\subset (2i\pi)^2m_F\Z.$  Les exemples ci-dessous laissent penser que   $m_{\Q(\sqrt 5)}=15$ convient.

Nous \'etudions maintenant les invariants associ\'es \`a diff\'erentes extensions quadratiques ATR  $K$ de $F$  dans lesquelles la place $v_1$  devient complexe.

\medskip
\noindent
{\bf (a) Un exemple \`a groupe des classes  $C_4$.}

\medskip

On consid\`ere
$K=F(\sqrt{21\epsilon-11})$. C'est une extension ATR de $F$, dont le
groupe des  classes au sens restreint
est  cyclique  d'ordre $4.$

\medskip
Aux quatre classes  distinctes
$\cC_1,\ldots,\cC_4$ de $\cO_K$
  au sens restreint,
on associe les \'el\'ements  $\tau_1, \ldots,\tau_4$ de $K$  fix\'es
par les  matrices suivantes de $\Gamma$:

$$\begin{array}{ll}\gamma_1\left(\begin{array}{cc}4\epsilon + 2 & -2\epsilon - 5\\
-2\epsilon - 1 & 2\epsilon + 1\end{array}\right), &
\gamma_2=\left(\begin{array}{cc}13\epsilon + 9 & 4\epsilon + 1\\ -32\epsilon - 18& -7\epsilon -
6\end{array}\right), \\

\\
 \gamma_3=\left(\begin{array}{cc} -47\epsilon - 17 & 9\epsilon
- 6\\ -520\epsilon - 308& 53\epsilon + 20\end{array}\right),
 &
\gamma_4= \left(\begin{array}{cc}165\epsilon + 79& 5\epsilon - 2\\ -8512\epsilon - 5160&
-159\epsilon - 76\end{array}\right).
\end{array}$$
On calcule dans $\C/m_F \Z$ les invariants $\rho_k(\gamma_k):=\tilde{\rho}_{\tau_k}(\gamma_k)/(2i\pi)^2$ associ\'es.
On trouve  avec une pr\'ecision minimale de 50 d\'ecimales significatives
\begin{eqnarray*}
\rho_1(\gamma_1)&\approx& -0.3666666666\ldots-0.27784944302\ldots i;\\
\rho_2(\gamma_2)&\approx&0.32623940638\ldots; \\
\rho_3(\gamma_3) &\approx& 1.83333333333\ldots+0.27784944302\ldots i;\\
\rho_4(\gamma_4) &\approx& 17.8404272602\ldots.
\end{eqnarray*}
Sans conna\^itre la constante $m_F,$ on doit tester l'alg\'ebricit\'e du nombre complexe bien d\'efini
$$u_k(m_F)=\exp(2i\pi m_F\rho_k(\gamma_k)).$$
Cependant, dans la  pratique, il semble qu'il existe toujours
 une  racine $m_F$-i\`eme de $u_k(m_F)$ qui appartienne au corps de d\'efinition de  $u_k(m_F).$
Pour  chaque valeur de   $\rho_k(\gamma_k)$ dans la liste pr\'ec\'edente,
notons $$u_k(1)=\exp(2i\pi\rho_k(\gamma_k)).$$
Le nombre complexe $u_k(1)$ est bien d\'efini seulement modulo les racines $m_F$-i\`emes de l'unit\'e.
 Quitte \`a modifier $u_k(1)$ par une racine de l'unit\'e, on peut donc
esp\'erer tester avec succ\`es son alg\'ebricit\'e. Pr\'ecis\'ement, la commande Pari
\texttt{algdep($u_1(1)e^{-\frac 4 {15}i\pi },16)$}  sugg\`ere
la relation alg\'ebrique suivante pour
 $u_1:=u_1(1)e^{-\frac 4 {15}i\pi }$:
\begin{equation}
Q_1(x):=x^8+4x^7-10x^6+x^5+9x^4+x^3-10x^2+4x+1.
\end{equation}
On en conclut que le  nombre complexe $u_1$
co\"incide sur 50 d\'ecimales avec la racine $-5.7303\ldots $
du polyn\^ome $Q_1$.
Il en va de m\^eme pour les trois autres invariants  : \\
$u_2:=u_2(1)e^{\frac {23}{15}i\pi}$ co\"incide
avec la racine $0.834403847893\ldots+0.5511535345\ldots i$ de $Q_1.$\\
$u_3:=u_3(1)e^{\frac{20}{15}i\pi}$ co\"incide avec la racine  $-0.17450889906\ldots=1/u_1.$\\
$u_4:=u_4(1)e^{\frac2{15}i\pi}$ co\"incide avec la racine $0.834403847893\ldots-0.5511535345\ldots i.$

On  v\'erifie  a posteriori  que $Q_1$ est effectivement  le polyn\^ome minimal
d'une unit\'e du corps de classes de Hilbert (au sens restreint) de $K.$

\medskip

\noindent
{\bf (b) Un exemple \`a groupe des classes $C_6$.}

\medskip

On consid\`ere  maintenant $K= F(\sqrt{26\epsilon-37})$,
dont le  nombre de classes au sens restreint
 est $6.$
On trouve avec parfois 200 d\'ecimales de pr\'ecision dans $\C/m_F \Z$:
\begin{eqnarray*}
\rho_1(\gamma_1) &\approx&   4.499999999999999\ldots - 0.728584512\ldots i; \\
\rho_2(\gamma_2) &\approx&  -1.078476376302846\ldots- 0.195385083050863\ldots i; \\
\rho_3(\gamma_3) &\approx&    -2.178476376302846\ldots  + 0.195385083050863\ldots i;\\
\rho_4(\gamma_4) &\approx&  -18.61666666666666\ldots+ 0.728584510266413\ldots i; \\
\rho_5(\gamma_5) &\approx&  -0.988190290363819\ldots+ 0.195385083050863\ldots i; \\
\rho_6(\gamma_6) &\approx&  -2.421523623697153\ldots - 0.195385083050863\ldots i.
\end{eqnarray*}

\noindent
La commande   \texttt{algdep($u_2(1)e^{\frac {16} {15}i\pi},12)$}  de Pari sugg\`ere la relation alg\'ebrique:
\begin{eqnarray*}
Q_2(x) &=& x^{12}+106x^{11}+873x^{10}-2636x^9+3040x^8-626x^7-1108x^6-626x^5 \\
   & & +3040x^4+2636x^3+873x^2+106x+1.
\end{eqnarray*}
Ce polyn\^ome est effectivement le polyn\^ome minimal d'une unit\'e de $H_{K}^+.$ En outre,
les nombres complexes
\begin{equation*}\begin{array}{llll}
u_1:&=u_1(1) &=& -97.30316237461782\ldots,\\
u_2:&=u_2(1)e^{\frac{16}{15}i\pi}&\approx&-3.276785825745970\ldots + 0.955188763599790\ldots i, \\
u_3:&=u_3(1)e^{\frac{19}{15}i\pi}& \approx& -0.281276149057161\ldots  +  0.081992486337387\ldots i, \\
u_4:&=u_4(1)e^{\frac{5}{15}i\pi} &\approx& -0.010277158271074\ldots, \\
u_5:&=u_5(1)e^{\frac{16}{15}i\pi} &\approx&  -0.281276149057161\ldots - 0.081992486337387\ldots i, \\
u_6:&=u_6(1)e^{\frac{16}{15}i\pi} &\approx& -3.276785825745970\ldots  - 0.955188763599790\ldots i
 \end{array}
\end{equation*}
co\"incident chacun avec une racine de $Q_2$ sur plusieurs dizaines de d\'ecimales.
\medskip

\noindent
{\bf (c) Un exemple \`a groupe des classes  $C_2\times C_4$.}

\medskip

Le groupe des classes (au sens restreint) de $K=F(\sqrt{21\epsilon-29})$ est d'ordre $8,$ isomorphe \`a $C_2\times C_4.$ L'algorithme d\'ecrit pr\'ec\'edemment permet de calculer
\begin{eqnarray*}
\rho_1(\gamma_1) &\approx&     -1.866666666666\ldots - 0.787374943777\ldots i, \\
\rho_2(\gamma_2) &\approx&      0.297896510457\ldots +0.068709821260\ldots i, \\
\rho_3(\gamma_3) &\approx&  -0.300000000000\ldots+ 0.161542382812\ldots i, \\
\rho_4(\gamma_4) &\approx&  -1.097896510457\ldots + 0.068709821260\ldots i, \\
\rho_5(\gamma_5) &\approx&    -0.133333333333\ldots + 0.787374943777\ldots i, \\
\rho_6(\gamma_6) &\approx&  -0.031229843791\ldots- 0.068709821260\ldots i, \\
\rho_7(\gamma_7) &\approx&  -0.900000000000\ldots - 0.161542382812\ldots i, \\
\rho_8(\gamma_8) &\approx& -1.38121\ldots - 0.391304\ldots i.
\end{eqnarray*}

\noindent
L'invariant le plus pr\'ecis est  $\rho_5(\gamma_5)$ dont on a obtenu  200 d\'ecimales significatives.
La commande   Pari
 \texttt{algdep($u_5(1)e^{-\frac {19} {15}i\pi},16, 200)$}
   fournit comme candidat  le polyn\^ome
   r\'eciproque
\begin{eqnarray*}
Q_3(x) &=& x^{16}+139x^{15}-255x^{14}-538x ^{13}+2018x^{12}-2237x^{11}+1898x ^{10}-3034x^9 \\
& & + 4137x^8    -3034x^7+1898x^6-2237x^5+2018x^4-538x^3-255x^2+139x+1.
\end{eqnarray*}

On constate d'abord que $Q_3$ est en effet  le polyn\^ome minimal d'une unit\'e de $H_{K}^+.$ Par ailleurs,
six autres invariants co\"incident eux  aussi avec des racines de ce polyn\^ome, au moins pour leurs $n$ premi\`eres d\'ecimales ($10\leq n\leq 200$ selon les cas):
\begin{eqnarray*}
u_1:=u_1(1)e^{\frac{16}{15}i\pi} &\approx&  -140.7834195600\ldots;  \\
u_2:=u_2(1)e^{\frac{19}{15}i\pi} &\approx&  0.589707772431\ldots - 0.271949567981\ldots i;\\
u_3:=u_3(1)e^{\frac{24}{15}i\pi} &\approx& -0.362402166665\ldots; \\
u_4:=u_4(1)e^{\frac5{15}i\pi} &\approx& 0.589707772431\ldots + 0.271949567981\ldots i; \\
u_5:=u_5(1)e^{\frac{19}{15}i\pi} &\approx& -0.007103109180\ldots; \\
u_6:=u_6(1)e^{\frac{3}{15}i\pi} &\approx& 1.398366700490\ldots + 0.644870625513\ldots i; \\
u_7:=u_7(1)e^{\frac{12}{15}i\pi} &\approx&      -2.759365401151\ldots.
\end{eqnarray*}
La pr\'ecision avec laquelle   $\rho_8(\gamma_8)$ est obtenu se r\'ev\`ele
 insuffisante pour identifier $u_8$ avec une des racines de $Q_3.$
 Pour des raisons de sym\'etrie, il doit correspondre  \`a la racine
$$ 1.398366700490\ldots  - 0.644870625513\ldots i.$$

\subsection{Corps de base $\Q (\sqrt{2})$}
L'anneau des entiers de $F'=\Q(\sqrt{2})$ est euclidien pour la norme.
On note $\epsilon=1+\sqrt{2}$ son unit\'e fondamentale de norme $-1, $ et l'on ordonne les plongements de sorte que $\epsilon_1<0$ et $\epsilon_2>0.$ La constante $m_{F'}$ optimale est vraisemblablement $m_{F'}=6$ dans ce cas.

\medskip

\noindent
{\bf (a) Un exemple \`a groupe des classes $C_4.$}

\medskip

L'extension ATR
$K=F'(\sqrt{12\epsilon-11})$ poss\`ede un groupe des classes au sens restreint cyclique d'ordre $4.$
Nous associons \`a chaque classe un invariant dans $\C/m_{F'}\Z$:
\begin{eqnarray*}
\rho_1(\gamma_1) &\approx&-1.333333333333333\ldots - 0.301378336840440\ldots i; \\
\rho_2(\gamma_2) &\approx& -0.274078669810665\ldots; \\
\rho_3(\gamma_3) &\approx&   0.166666666666666\ldots + 0.301378336840440\ldots i; \\
\rho_4(\gamma_4) &\approx&  -3.225921330189334\ldots.
\end{eqnarray*}
Tous sont  obtenus avec une pr\'ecision sup\'erieure
\`a $40$ d\'ecimales.
On en d\'eduit  au moyen de la commande Pari
   \texttt{algdep}
le polyn\^ome  candidat
$$ Q_4(x)=x^8+6x^7-5x^6-4x^5+5x^4-4x^3-5x^2+6x+1.$$
 On v\'erifie a posteriori que ce polyn\^ome d\'efinit bien une unit\'e du corps
de classes de Hilbert au sens restreint de $K.$
Par ailleurs, $4$ des  $8$ racines de $Q_4$  co\"incident sur leurs $40$ premi\`eres
 d\'ecimales  avec les nombres complexes
$$\begin{array}{lll}
u_1:=u_1(1)e^{\frac{10}{6}i\pi}&\approx&  -6.643347233735518\ldots; \\
u_2:=u_2(1)e^{\frac{10}{6}i\pi} &\approx&  -0.931490243381137 \ldots -0.363766307518644\ldots i; \\
u_3:=u_3(1)e^{\frac{4} 6 i\pi} &\approx& -0.150526528994587\ldots;\\
u_4:=u_4(1)e^{\frac{8}{6}i\pi} &\approx&   -0.931490243381137\ldots + 0.363766307518644\ldots i.
\end{array}$$
\noindent

\medskip

\noindent
{\bf (b) Un exemple \`a groupe des classes $C_8.$}

\medskip

On consid\`ere enfin l'extension quadratique ATR
$K=F'(\sqrt{25\epsilon-31}),$ dont  le  groupe des classes au sens restreint est cyclique d'ordre $8$.
\`A chaque classe correspond une matrice $\gamma_k\in \SL_2(\mc O_{F'})$ et  un invariant de $\C/m_{F'}\Z $:
\begin{eqnarray*}
\rho_1(\gamma_1) &\approx&    -3.666666666666\ldots - 2.047636549497\ldots i; \\
\rho_2(\gamma_2) &\approx& 1.855997078695\ldots - 0.315172999961\ldots i;  \\
\rho_3(\gamma_3) &\approx&  -122.347\ldots + 0.625\ldots i; \\
\rho_4(\gamma_4) &\approx&  -87.06066958797\ldots+ 0.315172999961\ldots i; \\
\rho_5(\gamma_5) &\approx&   -18.16666666666\ldots + 2.047636549497\ldots i; \\
\rho_6(\gamma_6) &\approx&  20.060669587971\ldots + 0.315172999961\ldots i; \\
\rho_7(\gamma_7) &\approx&   -13.884816073095\ldots;  \\
\rho_8(\gamma_8) &\approx&  47.894002921304\ldots - 0.3151729999612\ldots i.
\end{eqnarray*}
L'invariant le plus pr\'ecis est $\rho_5(\gamma_5)$, qu'on a pu
calculer avec plus de
 200 d\'ecimales significatives.
La commande Pari   \texttt{algdep($u_5(1)e^{-\frac 8 {16}i\pi},16)$} sugg\`ere  le polyn\^ome r\'eciproque
\begin{eqnarray*}
Q_5(x)&:=&x^{16} + 386792 x^{15} - 5613916 x^{14} +
21963312 x^{13} - 13291318 x^{12} + 32052888 x^{11} \\
& & + 15011472 x^{10} + 16774296 x^{9} + 36336275 x^8 + 16774296 x^7 + 15011472 x^6 \\
 & &  + 32052888 x^5 - 13291318 x^4  + 21963312x^3 - 5613916 x^2+386792 x + 1.
\end{eqnarray*}
Il est ais\'e de v\'erifier que  $Q_5$ d\'efinit effectivement  une unit\'e de $H_{K}^+.$
\`A une racine de l'unit\'e pr\`es, les exponentielles des  nombres complexes pr\'ec\'edents  co\"incident sur leurs premi\`eres d\'ecimales (entre 10 et 200 selon les cas) avec les racines suivantes de $Q_5$:
$$\begin{array}{lll}
u_1:=u_1(1)e^{\frac{2}{6}i\pi} &\approx& -386806.513645927\ldots; \\
u_2:=u_2(1)e^{\frac{2}{6}i\pi} &\approx&  7.17151519909699\ldots + 1.02818667270890\ldots i; \\
u_4:=u_4(1)e^{\frac{1}{6}i\pi} &\approx& 0.136632045175690\ldots + 0.0195890608908274\ldots i; \\
u_5:=u_5(1)e^{\frac{8}{6}i\pi} &\approx& -0.00000258527187\ldots; \\
u_6:=u_6(1)e^{\frac{11}{6}i\pi} &\approx& 0.136632045175690\ldots - 0.0195890608908274\ldots i; \\
u_7:=u_7(1)e^{\frac{7}{6}i\pi} &\approx& -0.317863811003618\ldots - 0.948136381357796\ldots i; \\
u_8:=u_8(1)e^{\frac{1}{6}i\pi} &\approx& 7.17151519909699\ldots  - 1.02818667270890\ldots i.
\end{array}$$
La pr\'ecision obtenue sur les d\'ecimales de $\rho_3(\gamma_3)$ est insuffisante pour identifier $u_3.$ Pour des raisons de sym\'etrie, il doit correspondre  \`a la racine  suivante de $Q_5 :$
$$ -0.317863811003618\ldots + 0.948136381357796\ldots i. $$

\section{P\'eriodes de s\'eries d'Eisenstein.}\label{lastsection}

L'objet de cette partie est  d'\'etablir une formule g\'en\'erale
qui exprime  la valeur sp\'eciale en $s=0$ des fonctions $L$
introduites  pr\'ec\'edemment en terme   de p\'eriodes de s\'eries
d'Eisenstein pour un tore de $\Gamma,$ ce qui  compl\`ete la
d\'emonstration  des th\'eor\`emes \ref{thm:realmain} et
\ref{thm:atrmain}.

\medskip

Des formules similaires  ont d\' ej\`a \'et\'e obtenues dans [Har]
et [Ha] par exemple. On donne ici une pr\'esentation des
r\'esultats expos\'es dans la th\`ese du premier auteur [Ch1,
section 5] sous une forme directement
 utilisable  dans les parties \ref{sec:TR} et \ref{sec:ATR}.

\bigskip

Quelques notations multi-indices standard permettront de rendre les formules plus a\-gr\'e\-ables.
 On associe d'abord \`a un $n$-uplet de nombres  complexes $z=(z_1,\ldots,z_n)\in\C^n$ sa partie imaginaire $y=(\textrm{Im}(z_1),\ldots,\textrm{Im}(z_n)),$ sa trace $Tr(z)=z_1+\ldots +z_n$
et sa norme $N(z)=z_1\cdots z_n.$ Pour un \'element $\mu$ de $F,$
on d\'esigne par $\mu z$ et $z+\mu$ les $n$-uplets
$(\mu_1z_1,\ldots,\mu_nz_n)$ et $(\mu_1+z_1,\ldots,\mu_n+z_n)$
respectivement. On  introduit alors  pour $\Reel(s)>1$ la s\'erie
d'Eisenstein
\begin{eqnarray}
\label{defEzs}
E(z,s)&=&
\sum_{\ \ \ (\mu,\,\nu)\in \mc O_F^2/\mc O_F^\times} \!\!\!\!\!\!\!\!\!\prim
\ \frac{ N(y)^{s} }{|N(\mu
z+\nu)|^{2s}},
\end{eqnarray}
o\`u le groupe d'unit\'es  $\mc O_F^\times$ op\`ere diagonalement
sur $\mathcal O_F^{\,2}.$ Cette s\'erie  d\'efinit  une forme
modulaire de Hilbert non-holomorphe de poids $(0, \ldots,0)$ pour
$\Gamma.$ Le th\'eor\`eme principal de cette partie met en jeu des
p\'eriodes associ\'ees \`a des d\'eriv\'ees partielles de
$E(z,s).$

\bigskip
 Soit $K$ une extension  quadratique de $F.$
 La
signature de $K$ est  de la forme $(2r,\,c)$ avec $r+c=n.$ On
ordonne les $n$ places archim\'ediennes de $F$ de sorte que les
$c$ premi\`eres places  $\upsilon_1,\ldots,\upsilon_c$ se
prolongent chacune  en une place complexe de $K,$ et que pour les
$r$ places suivantes $\upsilon_{c+1},\ldots,\upsilon_n$
  on ait un isomorphisme de $\R$-alg\`ebre
$K\otimes_{F,v_j} \R\simeq \R\oplus \R.$
 On fixe une fois pour toutes de telles identifications, que l'on appelle encore  $\upsilon_j$ par abus de notation.

\medskip

Comme dans la partie \ref{subsec:quad}, on fixe un id\'eal $I$ de
$\mathcal O_F,$ et l'on note  $\mathcal O_I=\mathcal O_F+I\mathcal
O_K$ l'ordre de $K$ de conducteur $I.$ On se donne un  $\mathcal
O_I$-module projectif $M$ de $K,$ et l'on d\'efinit
$\tau:=\omega_2/\omega_1,$ o\`u $(\omega_1, \omega_2)$ est une
 une $\mathcal O_F$-base positive de $M.$  On pose
alors
$$ \left\{
\begin{array}{lll}
\tau_j:= v_j(\tau) & \in \cH_j & \mbox{ pour } j=1,\ldots,c,   \\
(\tau_j,\tau_j'):= v_j(\tau) & \in \R\times \R &  \mbox{ pour } j=c+1,\ldots,n.
\end{array} \right.
$$
Pour chaque $c+1\le j\le n,$ on appelle
 $\Upsilon_j$ la g\'eod\'esique hyperbolique sur $\cH_j$
joignant $\tau_j$ \`a $\tau_j'$, orient\'ee dans le sens
allant de $\tau'_j$ \`a $\tau_j$.

 Le produit
$$ R_\tau = \{\tau_1\}\times \cdots\{\tau_c\}\times  \Upsilon_{c+1} \times\cdots\times\Upsilon_n\subset \mc H ^n$$
est un espace contractile hom\'eomorphe \`a $\R^{r}$.
On
le munit de l'orientation naturelle h\'erit\'ee des
$\Upsilon_j$.
 Le stabilisateur $\Gamma_\tau$ de $\tau$ dans $ \Gamma$ est un groupe ab\'elien
de rang $r$ (modulo la  torsion), qui s'identifie avec le
sous-groupe $V_1$ des unit\'es de $V$ de norme relative  $1$ sur
$F$. Il op\`ere sur $R_\tau$ par homographies, et le quotient
$\Gamma_\tau\backslash R_\tau$ est compact, isomorphe \`a un tore
r\'eel de dimension $r$. Soit $\Delta_\tau$ un domaine fondamental
pour l'action de $\Gamma_\tau$ sur $R_\tau$. On identifie
$\Delta_\tau$ avec son image dans $\cX$, qui est un cycle ferm\'e
de dimension $r$ dans ce quotient.

\medskip

\begin{theorem} \label{theo:ppalannexe}
Pour tout $\cO_I$-module projectif  $M$ dans $K,$ on a:
\begin{equation}
\label{finalEzsperiode}
\int_{\Delta_\tau} \frac{\partial^rE(z,s)}{\partial z_{c+1} \cdots \partial z_{n}} dz_{c+1}\wedge \ldots\wedge  dz_n =  \left(\frac{\Gamma(\frac{s+1} 2)^{2}}{2i\Gamma(s)}\right)^r d_F^{-s}  \, L(M, s).
\end{equation}
\end{theorem}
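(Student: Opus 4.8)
The plan is to establish \eqref{finalEzsperiode} first for $\Reel(s)$ large enough that the series $E(z,s)$ and all the manipulations below converge absolutely, and then to invoke analytic continuation. The argument is a Hecke unfolding: two ingredients are needed. First, one computes term by term the $r$ holomorphic derivatives $\partial^r E/\partial z_{c+1}\cdots\partial z_n$. Second, one unfolds the integral over the compact cycle $\Delta_\tau=\Gamma_\tau\backslash R_\tau$ against the quotient $\mathcal{O}_F^\times\backslash\mathcal{O}_F^2$, turning the left-hand side into an integral over $R_\tau$ of a single term, summed over $(\mathcal{O}_F^2\setminus\{0\})/(\mathcal{O}_F^\times,\Gamma_\tau)$.

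For the differentiation I would use the elementary identity: with $y=\Ima z=(z-\bar z)/(2i)$, $w=\mu z+\nu$ and $\bar w=\mu\bar z+\nu$, one has $\tfrac{1}{2i}-\tfrac{y\mu}{w}=\tfrac{\bar w}{2iw}$, so that differentiating one term of $E(z,s)$ in the single variable $z_j$ gives
\[
\frac{\partial}{\partial z_j}\left(\frac{(\Ima z_j)^s}{|\mu_j z_j+\nu_j|^{2s}}\right)=\frac{s}{2i}\cdot\frac{(\Ima z_j)^{s-1}}{(\mu_j z_j+\nu_j)^{s+1}(\mu_j\bar z_j+\nu_j)^{s-1}}.
\]
Iterating over $j=c+1,\ldots,n$ produces a factor $(s/2i)^r$ and the analogous series with exponents $s-1$, $s+1$, $s-1$ at the places $j>c$. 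This series, restricted to $R_\tau$, is $\Gamma_\tau$-invariant, so unfolding yields
\[
\int_{\Delta_\tau}\!\frac{\partial^r E(z,s)}{\partial z_{c+1}\cdots\partial z_n}\,dz_{c+1}\wedge\cdots\wedge dz_n=\Bigl(\frac{s}{2i}\Bigr)^r\!\sum_{(\mu,\nu)}\Bigl[\prod_{j\le c}\frac{(\Ima\tau_j)^s}{|\mu_j\tau_j+\nu_j|^{2s}}\Bigr]\prod_{j>c}\int_{\Upsilon_j}\frac{(\Ima z_j)^{s-1}\,dz_j}{(\mu_j z_j+\nu_j)^{s+1}(\mu_j\bar z_j+\nu_j)^{s-1}},
\]
the sum being over $(\mu,\nu)\in\mathcal{O}_F^2\setminus\{0\}$ modulo the diagonal action of $\mathcal{O}_F^\times$ and the action of $\Gamma_\tau$. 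Via the identification $\Gamma_\tau\simeq V_1$ from Section~\ref{subsec:quad} and the correspondence $(\mu,\nu)\leftrightarrow \mu\tau+\nu$, this index set matches $M'\setminus\{0\}$ modulo $\tilde V=\langle V_1,\mathcal{O}_F^\times\rangle$, where $M'=\mathcal{O}_F+\mathcal{O}_F\tau$ and $M=\omega_1M'$.

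The technical heart is the geodesic integral at a place $j>c$. I would parametrize $\Upsilon_j$ by the imaginary axis via a matrix of $\SL_2(\R)$ whose fixed points are the endpoints $\tau_j,\tau_j'$, expressing $\Ima z_j$, $\mu_j z_j+\nu_j$ and $dz_j$ in terms of the parameter $t\in(0,\infty)$ (with a sign coming from the chosen orientation of $\Upsilon_j$). Setting $P_j=\mu_j\tau_j+\nu_j$, $Q_j=\mu_j\tau_j'+\nu_j$ and rescaling $u=|Q_j/P_j|\,t$, the integral reduces to $\int_0^\infty u^{s-1}(1+iu)^{-s-1}(1-iu)^{-s+1}\,du=\int_0^\infty u^{s-1}(1-iu)^2(1+u^2)^{-s-1}\,du$. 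The key point is that its real part $\int_0^\infty u^{s-1}(1-u^2)(1+u^2)^{-s-1}\,du$ vanishes (the two resulting Beta integrals $\tfrac12 B(\tfrac s2,\tfrac s2+1)$ cancel), leaving $-2i\int_0^\infty u^s(1+u^2)^{-s-1}\,du=-i\,B(\tfrac{s+1}{2},\tfrac{s+1}{2})=-i\,\Gamma(\tfrac{s+1}{2})^2/\Gamma(s+1)$; hence the geodesic integral equals $|\tau_j-\tau_j'|^s\,\sign(P_jQ_j)\,|P_jQ_j|^{-s}\cdot\Gamma(\tfrac{s+1}{2})^2/\Gamma(s+1)$ up to the orientation sign and factor of $i$. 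Combined with the prefactor $(s/2i)^r$ and the identity $s^r/\Gamma(s+1)^r=1/\Gamma(s)^r$, the places $j>c$ produce exactly the constant $\bigl(\Gamma(\tfrac{s+1}{2})^2/(2i\Gamma(s))\bigr)^r$ of the statement, times $\prod_{j>c}|\tau_j-\tau_j'|^s$ and the factors $\sign(P_jQ_j)|P_jQ_j|^{-s}$.

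It then remains to recognize the right-hand side. Using $N_{K/\Q}(\mu\tau+\nu)=\prod_{j\le c}|P_j|^2\prod_{j>c}P_jQ_j$, the residual sum $\sum\prod_{j\le c}|P_j|^{-2s}\prod_{j>c}\sign(P_jQ_j)|P_jQ_j|^{-s}$ is $\sum_{x\in M'/\tilde V}\sign(N_{K/\Q}x)|N_{K/\Q}x|^{-s}$ (over nonzero $x$); since $[V:\tilde V]=\delta_I$ and $\sign(Nx)|Nx|^{-s}$ is $V$-invariant, this equals $\delta_I\sum_{x\in M'/V}\sign(Nx)|Nx|^{-s}=L(M',s)/(\N M')^s$. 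Because $(\omega_1,\omega_2)$ is a positive basis and $(1,\tau)$ is positively oriented one checks $N_{K/\Q}(\omega_1)>0$, whence $L(M,s)=L(M',s)$, $\N M=N_{K/\Q}(\omega_1)\,\N M'$, and the leftover homothety factor $\prod_{j\le c}(\Ima\tau_j)^s\prod_{j>c}|\tau_j-\tau_j'|^s$ equals $(\N M)^s d_F^{-s}$, which follows from the discriminant computation $|\mathrm{disc}_{\Z}(\mathcal{O}_F+\mathcal{O}_F\tau)|=4^c d_F^{\,2}\prod_{j\le c}(\Ima\tau_j)^2\prod_{j>c}(\tau_j-\tau_j')^2$ (itself a consequence of $\det(\mathrm{Tr}_{K/F}(\tau^i\tau^j))=(\tau-\tau')^2$) together with the normalization of $\N$. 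I expect the main obstacle to be exactly this last bookkeeping step: one must check that every spurious factor --- the orientation signs of the $\Upsilon_j$, the powers of $i$ from the Beta integral, the index $\delta_I$, the passage from $M/V$ to $M/\tilde V$, and the discriminant constant $d_F$ --- assembles precisely into the stated form; and one must justify the unfolding carefully (absolute convergence for $\Reel(s)$ large, freeness of the $\Gamma_\tau$-action on $\mathcal{O}_F^2\setminus\{0\}$ modulo roots of unity, and exclusion of the term $(\mu,\nu)=0$).
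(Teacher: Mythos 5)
Your proposal is correct and follows essentially the same route as the paper: term-by-term differentiation of $E(z,s)$ giving the factor $(s/2i)^r$, Hecke unfolding of the $\Gamma_\tau$-action (the paper phrases it via the torus parametrization $t_j=-i\frac{z_j-\tau_j'}{z_j-\tau_j}$ and the quotient $V_1\backslash(\R_+^\times)^r$, which is your unfolding in other words), the same Beta-type integral $\int_0^\infty u^{s-1}(1-iu)^2(1+u^2)^{-s-1}du=-i\,\Gamma(\tfrac{s+1}{2})^2/\Gamma(s+1)$ (which the paper simply quotes from Siegel), and the same use of $\delta_I=[V:\tilde V]$; the paper avoids your module $M'=\cO_F+\cO_F\tau$ by replacing $M$ at the outset by a totally positive homothety so that $(1,\tau)$ is itself a positive basis. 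One small correction in your bookkeeping: the homothety factor $\prod_{j\le c}(\Ima\tau_j)^s\prod_{j>c}|\tau_j-\tau_j'|^s$ equals $(\N M')^s d_F^{-s}$ (the paper's identity $\N M=d_F\prod_j\Ima(\tau_j)\prod_j(\tau_j'-\tau_j)$ is stated for the module with basis $(1,\tau)$), not $(\N M)^s d_F^{-s}$, and this is exactly what is needed since the unfolded sum produces $L(M',s)(\N M')^{-s}$ and $L(M,s)=L(M',s)$.
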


\begin{proof} On associe \`a tout nombre complexe $s$
 la $r$-forme diff\'erentielle  $\Gamma$-invariante sur
$\mathcal H^n$
$$
\omega_{\textrm{Eis}}^r(s):=\frac{\partial^rE(z,s)}{\partial z_{c+1}\cdots \partial z_{n}} dz_{c+1}\wedge \ldots\wedge  dz_n.$$
Lorsque  $\Reel(s)>1,$ un calcul direct montre que la p\'eriode consid\'er\'ee  prend  la forme
\begin{equation}
\label{premiereperiodeomegar}
\int_{\Delta_\tau} \omega_{\textrm{Eis}}^r(s) = \left(\frac s {2i}\right)^r\int_{\Delta_\tau}\!\!\!\!
\sum_{\ \ \ (\mu,\,\nu)\in \mc O_F^2/\mc O_F^\times}
\!\!\!\!\!\!\!\!\! \prim\!\!
 \left(\prod_{j=1}^{c}
\frac{(\Ima \
\tau_j)^s}{|\mu_j\tau_j+\nu_j|^{2s}}\right)
\left(\bigwedge_{j=c+1}^n \frac{
y_j^{s-1}(\mu_j\bar{z}_j+\nu_j)^2
}{|\mu_jz_j+\nu_j|^{2s+2}}dz_j\right).
\end{equation}

On d\'efinit une action naturelle  de $K^\times$ (et donc du
groupe $V_1$) sur   $(\R^\times_+)^r$ par la formule
\begin{equation*}\alpha\centerdot (t_{c+1},\ldots,t_n) :=  
\left(\left| \frac {\alpha_{c+1}}{\alpha'_{c+1}} \right|
t_{c+1},\ldots, \left| \frac {\alpha_{n}}{\alpha'_{n}} \right|
t_n\right).\end{equation*} Le tore compact r\'eel $T^r:V_1\backslash (\R^\times_+)^r$ est muni d'une mesure de Haar
canonique 
$$d^\times t=\frac{dt_{c+1}}{t_{c+1}}\wedge \ldots\wedge
\frac{dt_{n}}{t_{n}}.$$

On
peut supposer que  $(1,\tau)$ est  une $\mathcal O_F$-base positive de $M$,
quitte \`a changer $M$ en $\alpha M$ avec $\alpha\in K^\times$. 
  Dans ce cas, la g\'eod\'esique  $\Upsilon_j$ est orient\'ee dans le sens trigonom\'etrique selon nos conventions.
 On obtient  une param\'etrisation $t_j\in \R_+^\times$ de cette g\'eod\'esique   en posant
 $t_j=-i\frac{z_j-\tau'_j}{z_j-\tau_j}.$ Elle
  permet d'identifier le quotient 
$\Gamma_\tau\backslash R_\tau$ avec le tore  $T^r$ en 
respectant les orientations.  
On  observe  d'abord que 
$$N(M)=d_F\prod_{j=1}^c\Ima(\tau_j)\prod_{j=c+1}^n (\tau'_j-\tau_j).$$
Le changement de variable  qui correspond 
\`a cette param\'etrisation  transforme   l'identit\'e
 (\ref{premiereperiodeomegar}) en l'expression
\begin{align}\label{identiteintermomegar}
\int_{\Delta_\tau} \omega_{\textrm{Eis}}^r(s) = \left(\frac s
{2}\right)^r  \left(\frac{N(M)}{d_F}\right)^s\!\!
\int_{T^r}\!\!\!\!\!\! \sum_{\ \ \ \beta \in M/\mc O_F^\times}
\!\!\!\!\!\prim \left|N_{K/\Q}(\beta)
\right|^{-s}g_{\beta}(\beta\centerdot t)d^\times t,\end{align}
o\`u l'on a pos\'e $\beta=\mu\tau+\nu,$ \'el\'ement de $K^\times$
qui parcourt les classes non nulles de $M/\mc O_F^\times$ quand le
couple $(\mu,\nu)$ parcourt les classes non nulles de $\mathcal
O_F^2/\mc O_F^\times,$ et  o\`u $g_\beta :  (\R_+^\times)^r
\longrightarrow \C$ d\'esigne  la fonction auxiliaire

$$g_\beta(t)=\prod_{j=c+1}^n \frac{t_j^s(-it_j+\sign(\beta_j\beta'_j))^2}{(t_j^2+1)^{s+1}}.$$
L'\'etape cruciale consiste maintenant \`a utiliser une id\'ee
 due \`a Hecke ([Si] p. 86) :  on observe d'abord  que
l'on obtient un syst\`eme de repr\'esentants des classes non nulles de  $M/\mc O_F^\times$ en consid\'erant la famille $\{\beta\epsilon \}$ lorsque $\beta$ parcourt  les classes non-nulles de $M/\tilde{V}$ et $\epsilon$ parcourt  $V_1/\{\pm 1\}.$

Par cons\'equent, l'identit\'e (\ref{identiteintermomegar}) devient
\begin{align}
\int_{\Delta_\tau} \omega_{\textrm{Eis}}^r(s)
=&\,
 \left(\frac s {2}\right)^r  \left(\frac{N(M)}{d_F}\right)^s\!\!
\sum_{\ \ \ \beta \in M/\tilde{V}}\!\!\! \!\prim\!\!
\left|N_{K/\Q}(\beta) \right|^{-s}\int_{V_1 \backslash(\R_+^\times)^r}\,\sum_{\epsilon\in V_1/\{\pm 1\}}
g_{\beta\epsilon}(\beta\epsilon \centerdot t)d^\times t\\
=&\,
 \left(\frac s {2}\right)^r  \left(\frac{N(M)}{d_F}\right)^s\!\!
\sum_{\ \ \ \beta \in M/\tilde{V}}\!\!\! \! \prim\!\!
\left|N_{K/\Q}(\beta) \right|^{-s}
 \int_{(\R_+^\times)^r}\! g_{\beta}(\beta \centerdot t)d^\times t.
\end{align}
Le changement de variable $u=\beta\centerdot t$ dans la derni\`ere int\'egrale permet de scinder
cette int\'egrale  multiple en  un produit de $r$ int\'egrales de la forme suivante ([Si] formule (107)) :
\begin{equation*} \int_0^{+\infty} \frac{u_j^s(-i
u_j+\sign(\beta_j\beta'_j))^2}{(u_j^2+1)^{s+1}}\frac{du_j} {u_j} -i\sign(\beta_j\beta'_j)\frac{\Gamma\left( \frac{s+1}
2\right)^2}{\Gamma(s+1)}.\end{equation*}
Il s'ensuit  que
\begin{align}
\int_{\Delta_\tau} \omega_{\textrm{Eis}}^r(s)
=&\,
 \left(\frac {s\Gamma(\frac{s+1} 2)^2} {2i\Gamma(s+1)}\right)^r  \left(\frac{N(M)}{d_F}\right)^s\!\!
\sum_{\ \ \ \beta \in M/\tilde{V}}  \!\!\!\!\prim\!\!\!
\left|N_{K/\Q}(\beta) \right|^{-s}\prod_{j=c+1}^n
\sign(\beta_j\beta'_j).
\end{align}
La formule (\ref{finalEzsperiode}) s'en d\'eduit imm\'ediatement au vu de la d\'efinition (\ref{eqn:LMs}) de $L(M,s).$
\end{proof}

\medskip

\begin{corollary}\label{cor:final}
Soit $K$ une extension quadratique de signature $(2r,c)$ du corps  $F,$ avec $r+c=n=[F:\Q].$  Soit $M$ un   $\cO_I$-module projectif  dans $K.$
La fonction $L(M,s)$ poss\`ede alors un z\'ero d'ordre $\geq c$ en $s=0,$ et l'on a les formules :

\begin{itemize}
\item[i)]
si $ r\geq 2 ,$ alors :
\begin{equation}\label{L0r2}
\frac{L^{(c)}(M,0)}{c!}=\frac{(2i)^r }{2^n\pi^r}\int_{\Delta_\tau}  \frac{\partial^r \tilde{h}(z)}{\partial z_{c+1}\ldots \partial z_n} dz_{c+1}\wedge \ldots \wedge dz_n.
\end{equation}
\item[ii)]
si le corps $K$ n'a que deux places r\'eelles ($r=1$), alors
\begin{equation}\label{L0r1}
\frac{L^{(n-1)}(M,0)}{(n-1)!}=\frac{2i}{2^n\pi }\int_{\Delta_\tau}  \left(\frac{\partial \tilde{h}(z)}{\partial  z_n}-\frac {2^{2n-2}R_F} {z_n-\bar{z}_n}\right)dz_n.
\end{equation}
\end{itemize}
\end{corollary}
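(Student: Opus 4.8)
The plan is to read the corollary off from Theorem~\ref{theo:ppalannexe} by expanding both sides in Taylor series at $s=0$. Write $\omega^{r}(s):=\frac{\partial^{r}E(z,s)}{\partial z_{c+1}\cdots\partial z_{n}}\,dz_{c+1}\wedge\cdots\wedge dz_{n}$, the $r$-form appearing in the proof of Theorem~\ref{theo:ppalannexe}; that theorem then reads
$$\int_{\Delta_\tau}\omega^{r}(s)=\left(\frac{\Gamma(\tfrac{s+1}{2})^{2}}{2i\,\Gamma(s)}\right)^{r}d_F^{-s}\,L(M,s).$$
Since $\Gamma(\tfrac12)^{2}=\pi$ and $1/\Gamma(s)=s+O(s^{2})$, the archimedean prefactor equals $(\pi/2i)^{r}s^{r}+O(s^{r+1})$, while $d_F^{-s}=1+O(s)$. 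Everything then reduces to controlling the order of vanishing of $L(M,s)$ at $s=0$ and to identifying the relevant Taylor coefficient of $\omega^{r}(s)$.

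For the order of vanishing: up to the factor $\delta_I$ and finitely many Euler factors, $L(M,s)$ is the Hecke $L$-function of the ring-class Hecke character of $K$ whose archimedean type is the sign character at each of the $2r$ real places of $K$ and is trivial at the $c$ complex ones --- exactly the character responsible for the product $\prod_{j=c+1}^{n}\sign(\beta_j\beta_j')$ in~(\ref{eqn:LMs}). Its completed $L$-function carries the factor $\Gamma(s)^{c}$ from the $c$ complex places and $\Gamma(\tfrac{s+1}{2})^{2r}$ from the sign characters at the $2r$ real places; since $\Gamma(\tfrac12)\neq0$ whereas $\Gamma(s)$ has a simple pole at $s=0$, and since this completed $L$-function is entire (the character being non-trivial, as $r\ge1$), the functional equation (cf.\ [Si]) forces $L(M,s)$ to vanish to order $\ge c$ at $s=0$. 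Writing $L(M,s)=\frac{L^{(c)}(M,0)}{c!}\,s^{c}+O(s^{c+1})$ and matching the coefficients of $s^{n}=s^{r+c}$ in the identity above then yields
$$\frac{L^{(c)}(M,0)}{c!}=\left(\frac{2i}{\pi}\right)^{r}\int_{\Delta_\tau}\frac{\partial^{r}E_{n}(z)}{\partial z_{c+1}\cdots\partial z_{n}}\,dz_{c+1}\wedge\cdots\wedge dz_{n},$$
where $E_{n}(z)$ is the coefficient of $s^{n}$ in the expansion of $E(z,s)$ at $s=0$; the interchange of this coefficient with the integral is legitimate because $E(z,\cdot)$ is holomorphic near $0$ and $\Delta_\tau$ is compact.

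The substantive input is a Kronecker-limit-type computation for $E(z,s)$ at $s=0$, due to Asai~\cite{Asai} and carried out in the first author's thesis~\cite{charollois}: $E(z,\cdot)$ is holomorphic at $0$, its first non-zero Taylor coefficient is a constant multiple of $N(y)=y_1\cdots y_n$, and when $r\ge2$ its coefficient $E_{n}$ of $s^{n}$ satisfies $\partial^{r}E_{n}/\partial z_{c+1}\cdots\partial z_{n}=2^{-n}\,\partial^{r}\tilde h/\partial z_{c+1}\cdots\partial z_{n}$ modulo a form whose period over $\Delta_\tau$ vanishes. Indeed, the $N(y)$-type terms contribute to $\int_{\Delta_\tau}\omega^{r}(s)$ only forms whose $\partial^{r}/\partial z_{c+1}\cdots\partial z_{n}$-derivative restricts on $\Delta_\tau$ to a constant multiple of $dz_{c+1}\wedge\cdots\wedge dz_{n}$ --- recall that $z_1,\ldots,z_c$ are held fixed on $\Delta_\tau\subset\{\tau_1\}\times\cdots\times\{\tau_c\}\times\Upsilon_{c+1}\times\cdots\times\Upsilon_{n}$ --- hence to exact forms on the cycle $\Delta_\tau$; this is also why the right-hand side of Theorem~\ref{theo:ppalannexe} vanishes to order exactly $n$ and not $n-1$. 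Substituting gives formula~(\ref{L0r2}), i.e.\ case~(i); the special case $r=n$, $c=0$ recovers the formula used in the proof of Theorem~\ref{thm:realmain}. (That $\partial^{r}\tilde h/\partial z_{c+1}\cdots\partial z_{n}$ is $\Gamma$-invariant for $r\ge2$ --- needed for consistency --- follows as in Lemma~\ref{premierlemme}, since $\partial^{r}/\partial z_{c+1}\cdots\partial z_{n}$ annihilates the additive $\log$-cocycle in~(\ref{eqn:transhtilde}).)

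When $r=1$ this invariance fails, and $E_{n}$ acquires a non-holomorphic correction: one finds $\partial E_{n}/\partial z_{n}=2^{-n}\big(\partial\tilde h/\partial z_{n}-2^{2n-2}R_F/(z_n-\bar z_n)\big)$ modulo a form of vanishing period on $\Delta_\tau$, exactly paralleling the passage from $\eta$ to $\eta'$ in~(\ref{eqn:defeta2}) and the $R_F$-correction in the definition~(\ref{defomegaEis}) of $\omega_{\Eis}$ for $n=2$; substituting gives formula~(\ref{L0r1}), i.e.\ case~(ii). The main obstacle is precisely this Kronecker-limit identification of the Taylor data of $E(z,s)$ at $s=0$ with the Asai function $\tilde h$ (and with its correction term when $r=1$); once granted, the rest is the bookkeeping of the $\Gamma$-factor expansion together with one Stokes-theorem vanishing.
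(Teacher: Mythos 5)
Your overall strategy is the paper's: feed the Taylor expansion of $E(z,s)$ at $s=0$ into Theorem~\ref{theo:ppalannexe} and compare coefficients of $s^{n}$, the archimedean prefactor contributing $(\pi/2i)^{r}s^{r}+O(s^{r+1})$. But the substantive content of the paper's proof is precisely the determination of that expansion, which you relegate to a citation and, moreover, misstate. The paper deduces from Asai's functional equation $G(2s)E(z,s)=G(2-2s)E(z,1-s)$, combined with the generalized Kronecker limit formula at $s=1$, the expansion $E(z,s)=-2^{n-2}R_F\,s^{n-1}-2^{n-2}R_F\,s^{n}\bigl(\log N(y)+\gamma_F'-h(z)\bigr)+O(s^{n+1})$: the first non-vanishing Taylor coefficient is the \emph{constant} $-2^{n-2}R_F$, independent of $z$, not a constant multiple of $N(y)=y_1\cdots y_n$ as you assert. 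This matters, because your fallback justification --- that ``$N(y)$-type terms'' restrict on $\Delta_\tau$ to exact forms and hence have vanishing period --- is unjustified and in general false: for $r=1$ the restriction of $dz_n$ to $\Delta_\tau$ has period $\gamma z_n^{0}-z_n^{0}\neq 0$ for a generator $\gamma$ of $\Gamma_\tau$ modulo torsion, and for $r\ge 2$ a constant multiple of $dz_{c+1}\wedge\cdots\wedge dz_n$ is not exact on the torus $\Gamma_\tau\backslash R_\tau$ for any reason you give. What actually makes the argument work is that the $s^{n-1}$ coefficient is constant in $z$ (so all its $z$-derivatives vanish identically), and that in the $s^{n}$ coefficient the term $\log N(y)=\sum_j\log y_j$ is annihilated by the mixed derivative $\partial^{r}/\partial z_{c+1}\cdots\partial z_{n}$ when $r\ge 2$, while for $r=1$ it produces exactly the correction $2^{2n-2}R_F/(z_n-\bar z_n)$; these computations --- the identities (\ref{eq1:LMSr2}) and (\ref{eq2:LMSr1}) of the paper --- \emph{are} the proof, and your write-up assumes them (with an incorrect statement of part of the input) rather than establishing them.

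On the order of vanishing you do take a genuinely different route, via Hecke's theory: as stated it is loose, since $L(M,s)$ is the partial $L$-function attached to a single class of $\Pic^{+}(\cO_I)$, not the $L$-function of a character, so one must first write it as a finite linear combination of $L(s,\chi)$ over characters $\chi$ of $G_I$ with the indicated sign infinity type before the $\Gamma$-factor/functional-equation argument applies; this can be repaired and then also supplies the holomorphy of $L(M,s)$ at $s=0$ that your coefficient matching needs. The paper avoids this external input altogether: once the expansion of $E(z,s)$ at $s=0$ is known, both the meromorphic continuation of $L(M,s)$ and the bound $\ord_{s=0}L(M,s)\ge c$ fall out of the identity of Theorem~\ref{theo:ppalannexe}, since the left-hand side is $O(s^{n})$ while the prefactor contributes exactly $s^{r}$.
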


\begin{proof}
D'apr\`es les r\'esultats de Asai ([As], Th\'eor\`eme 3, ou [Ch2], Th\'eor\`eme 2.1),
la fonction $E(z,s)$ se prolonge sur $\C$ en une fonction m\'eromorphe de la variable $s$ qui satisfait l'\'equation fonctionnelle
\begin{equation}G(2s)E(z,s)=G(2-2s)E(z,1-s)\end{equation} avec $G(s):=d_F^{\frac s2}\pi^{-\frac {ns} 2} \Gamma(\frac s2)^n.$
En outre, elle poss\`ede un unique p\^ole simple en $s=1$,
et les premiers termes de son d\'eveloppement de Laurent au voisinage de ce p\^ole sont fournis par la formule limite de
Kronecker g\'en\'eralis\'ee :

\begin{equation}
E(z,\,s)=\frac{(2\pi)^nR_F}{4d_F}\left(\frac 1 {s-1}+ \gamma_F-
\log\left(\prod_{j=1}^n
y_j\right)+h(z)\right)+O(s-1),\end{equation} o\`u $\gamma_F$ est une constante  qui ne d\'epend  que de $F,$ et
o\`u les fonctions $h$ et $\tilde{h}=4^{n-1}R_Fh$ ont   \'et\'e introduites en (\ref{devhAsai}) et
(\ref{devhAsaiprime}).
Les deux \'egalit\'es pr\'ec\'edentes permettent d'obtenir le d\'eveloppement de Taylor  de $E(z,s)$ au voisinage de $s=0$ :
\begin{equation}E(z,s)=-2^{n-2}R_F s^{n-1}-2^{n-2}R_F s^n\Big(\log N(y)+ \gamma'_F-h(z)\Big)+O(s^{n+1}),\end{equation}
où $\gamma'_F$ ne dépend que de $F.$
On trouve  par cons\'equent  pour $r=n-c\geq 2$ :
\begin{equation}\label{eq1:LMSr2}\frac{\partial^r E(z,s)}{\partial z_{c+1}\ldots \partial z_n}= \frac{s^n}{2^n} \frac{\partial^r \tilde{h}(z)}{\partial z_{c+1}\ldots \partial z_n} +O(s^{n+1}),\end{equation}
et pour $r=1$ :
\begin{equation}\label{eq2:LMSr1}\frac{\partial E(z,s)}{\partial z_n}=\frac{s^n}{2^n} \left(\frac{\partial \tilde{h}(z)}{\partial z_{n}}-\frac {2^{2n-2}R_F} {z_n-\bar{z}_n}\right)+O(s^{n+1}).\end{equation}

On conclut alors  du th\'eor\`eme \ref{theo:ppalannexe} que $L(M,s)$ se prolonge en une fonction
 m\'eromorphe  sur $\C,$ dont le   d\'eveloppement de Taylor au voisinage de  $s=0$ se d\'eduit des identit\'es (\ref{eq1:LMSr2}) et (\ref{eq2:LMSr1}) ci-dessus :

 $$ L(M, s)=\frac{(2i)^r s^{n-r}}{2^n \pi^r }
 \int_{\Delta_\tau}  \frac{\partial^r \tilde{h}(z)}{\partial z_{c+1}\ldots \partial z_n}dz_{c+1}\wedge \ldots \wedge dz_n+O(s^{n+1-r})$$
 pour $r\geq 2,$ tandis que pour  $r=1$ :

 $$L(M, s)=\frac{2is^{n-1}}{ 2^{n}\pi }\int_{\Delta_\tau} \left(\frac{\partial \tilde{h}(z)}{\partial z_{n}}-\frac {2^{2n-2}R_F} {z_n-\bar{z}_n}\right)+O(s^{n}).$$

  Les formules (\ref{L0r2}) et (\ref{L0r1}) souhait\'ees en r\'esultent  imm\'ediatement.\end{proof}

\bigskip


\begin{thebibliography}{XXX}


\bibitem[As]{Asai} T.~Asai. {\em On a certain function analogous to
$\log|\eta(z)|.$} Nagoya Math. J.  \textbf{40} (1970),
193-211.



\bibitem[Co]{Cooke}
G.~Cooke. {\em A weakening  of the Euclidean property for integral domains and applications to algebraic number theory, Part I.} J. Reine Angew. Math. \textbf{282} (1976), 133-156.



\bibitem[Ch1]{charollois}
P.~Charollois. {Sommes de Dedekind et p\'eriodes de formes
modulaires de Hilbert.} Th\`ese de doctorat, Universit\'e
Bordeaux I, 2004.



\bibitem[Ch2]{charolloispapier}
P.~Charollois. {\em Sommes de Dedekind associées à un corps de
nombres totalement réel}. J. Reine Angew. Math. \textbf{610}  (2007),  125-147.


\bibitem[Dar1]{darmon-hpxh}
H.~Darmon. {\em Integration on $\cH\sb p\times\cH$ and
arithmetic applications}.  Ann. of Math. (2)  {\bf 154}  (2001),  no. 3, 589--639.

\bibitem[Dar2]{darmon1}
H.~Darmon. {Rational points on modular elliptic curves}. CBMS
Regional Conference Series in Mathematics  \textbf{101}, AMS-NSF:
 2004.


\bibitem[Das]{dasgupta_senior}
S.~Dasgupta. {\em Stark's conjectures}. Harvard senior thesis
(1999).

\bibitem[DD]{darmon-dasgupta}
H.~Darmon et S.~Dasgupta. {\em Elliptic units for real quadratic
fields}. Annals of mathematics   \textbf{163} (2006),  301-345.


\bibitem[DL]{darmon-logan}
H.~Darmon et A.~Logan. {\em Periods of Hilbert modular forms and
rational points on elliptic curves}. International Mathematics
Research Notices  \textbf{40} (2003), 2153-2180.


\bibitem[dSG]{deshalit-goren}
E.~de Shalit et E.Z.~Goren. {\em On special values of theta functions of genus two}.
Ann. Inst. Fourier (Grenoble)  \textbf{47}  (1997),  no. 3, 775--799.



\bibitem[DTvW]{dtv}
D.S.~Dummit, B.A.~Tangedal et P.B.~van Wamelen. {\em Stark's
conjecture over complex cubic number fields}. Math. Comp.
\textbf{73} (2004), no. 247, 1525--1546.


\bibitem[Fr]{freitag}
E.~Freitag. Hilbert Modular Forms. Springer Verlag: 1990.

\bibitem[Gr]{greenberg}
M.~Greenberg. {\em Stark-Heegner points and the cohomology of
quaternionic Shimura varieties}. Soumis.

\bibitem[GL]{goren-lauter}
E.Z.~Goren et  K.E.~Lauter.
{\em Class invariants for quartic CM fields}.
Ann. Inst. Fourier (Grenoble)  \textbf{57}  (2007),  no. 2, 457--480.


\bibitem[Ha]{Ha}
 Y.~Hara.  {\em On calculation of $L_K(1,\chi)$ for some Hecke
characters.} J. Math. Kyoto Univ. \textbf{33} (1993), 865-898.


\bibitem[Har]{Har}
 S.~Haran.  {\em  $p$-adic $L$-functions for modular forms.}  Compositio Math. \textbf{62} (1987), no. 1 p. 31-46.



\bibitem[Hard]{harder}
G.~Harder. {\em On the cohomology of $SL(2,O)$}.
 Lie groups and their representations
(Proc. Summer School on Group Representations of the Bolyai J\'anos Math. Soc.,
Budapest, 1971),  pp. 139--150. Halsted, New York, 1975.


\bibitem [He]{He} E.~Hecke. {\em Analytische Funktionen und algebraische
Zahlen, II.} \textbf{20}, Math. Werke, 2nd ed. Vandenhoeck,
Ruprecht, Göttingen: 1970. 381-404.



\bibitem [K-M]{K-M} R.~Kirby et P.~Melvin.  {\em Dedekind sums, $\mu$-invariant
and the signature cocycle.} Math. Ann. \textbf{299}  (1994), no. 2 ,
231-267.



\bibitem [M]{M} H.~Maennel. {\em Denominators of Eisenstein classes on Hilbert modular varieties and the $p$-adic class number formula.} Bonner Math. Schriften \textbf{247}  (1993).


\bibitem[RS]{ren-sczech}
T.~Ren. et R.~Sczech. {\em A refinement of Stark's conjecture over
complex cubic number fields.} Soumis.


\bibitem [Si]{Si}
  C.L.~Siegel. Advanced Analytic Number Theory. Tata
Institute of Fundamental Research, Bombay: 1980.



\bibitem [St]{St} H.M.~Stark. {\em Values of L-functions at $s=1.$} $I,$
$II$, $III$ et $IV,$ Advances in Math. \textbf{7} (1971), 301-343;
\textbf{17} (1975), 60-92; \textbf{22} (1976), 64-84; \textbf{35}
(1980), 197-235.


\bibitem [Ta]{Ta}  J.T.~Tate. Les conjectures de Stark sur les fonctions
$L$ d'Artin en $s=0.$ Birkhaüser, Boston: 1984. Progress in Math.
\textbf{47}.

\bibitem[Tr]{trifkovic}
M.~Trifkovic.
{\em Stark-Heegner points on elliptic
 curves defined over imaginary quadratic fields}.
Duke Math. J.  \textbf{135}  (2006),  no. 3, 415--453.

\bibitem [VdG]{VdG}
G.~Van der Geer. Hilbert modular surfaces. Springer-Verlag, Berlin
Heidelberg New-York: 1988. Ergebnisse der Math. 3. Folge, vol.
\textbf{16}.


\end{thebibliography}
\end{document}